\newcommand*{\bdot}[1]{%
	\accentset{\mbox{\large .}}{#1}}
\newtheorem{theorem}{Theorem}[section]
\newtheorem*{theorem*}{Theorem}
\newtheorem{theoremx}{Theorem}
\newtheorem{lemma}[theorem]{Lemma}
\newtheorem{corollary}[theorem]{Corollary}
\newtheorem{proposition}[theorem]{Proposition}
\theoremstyle{definition}
\newtheorem{definition}[theorem]{Definition}
\theoremstyle{remark}
\newtheorem{remark}[theorem]{Remark}
\newcommand{\defin}{\vcentcolon =}
\newcommand{\C}{\mathbb{C}}
\newcommand{\R}{\mathbb{R}}
\newcommand{\I}{I}
\newcommand{\II}{I \! \! I}
\newcommand{\III}{I \! \! I \! \! I}
\newcommand{\Hyp}{\mathbb{H}}
\newcommand{\dS}{\mathrm{dS}}
\newcommand{\AdS}{\mathrm{AdS}}
\newcommand{\CFT}{\mathrm{CFT}}
\newcommand{\Teich}{\mathcal{T}}
\newcommand{\CC}{C}
\newcommand{\Vol}{\textit{V}}
\newcommand{\hyp}{\mathfrak{h}}
\newcommand{\conf}{\mathfrak{c}}
\newcommand{\id}{\textit{id}}
\newcommand{\1}{\mathds{1}}
\newcommand{\MesLam}{\mathcal{ML}}
\newcommand{\QF}{\mathcal{QF}}
\newcommand{\Ends}{\mathcal{E}}
\newcommand{\mappa}[3]{#1 \colon #2 \rightarrow #3}
\newcommand{\hsk}{\hskip0pt}
\DeclarePairedDelimiterX{\scal}[2]{\langle}{\rangle}{#1, #2}
\DeclarePairedDelimiterX{\scall}[2]{(}{)}{#1, #2}
\DeclarePairedDelimiter{\set}{\{}{\}}
\DeclareMathOperator{\arctanh}{arctanh}
\DeclareMathOperator{\Conf}{Conf}
\DeclareMathOperator{\Iso}{Iso}
\DeclareMathOperator{\Hopf}{Hopf}
\DeclareMathOperator{\Proj}{P}
\DeclareMathOperator{\SL}{SL}
\DeclareMathOperator{\ext}{ext}
\DeclareMathOperator{\SGr}{SGr}
\DeclareMathOperator{\Gr}{Gr}
\DeclareMathOperator{\Sch}{Sch}
\DeclareMathOperator{\Th}{Th}
\DeclareMathOperator{\sgn}{sgn}
\DeclareMathOperator{\divr}{div}
\title[CGC foliations and Schl\"afli formulas of hyperbolic $3$-\hsk manifolds]{Constant Gaussian curvature foliations and Schl\"afli formulas of hyperbolic 3-\hsk manifolds}
\author{Filippo Mazzoli}
\thanks{Supported by the Luxembourg National Research Fund PRIDE15/10949314/GSM/Wiese.}
\date{\today}
\address{Mathematics Research Unit,
	University of Luxembourg,
	Maison du Nombre,
	6 avenue de la Fonte,
	L-4364 Esch-sur-Alzette,
	Luxembourg}
\email{filippo.mazzoli@uni.lu}
\begin{document}
	
\begin{abstract}
	\noindent We study the geometry of the foliation by constant Gaussian curvature surfaces $(\Sigma_k)_k$ of a hyperbolic end, and how it relates to the structures of its boundary at infinity and of its pleated boundary. First, we show that the Thurston and the Schwarzian parametrizations are the limits of two families of parametrizations of the space of hyperbolic ends, defined by Labourie in \cite{labourie1992surfaces} in terms of the geometry of the leaves $\Sigma_k$. We give a new description of the renormalized volume using the constant curvature foliation. We prove a generalization of McMullen's Kleinian reciprocity theorem, which replaces the role of the Schwarzian parametrization with Labourie's parametrizations. Finally, we describe the constant curvature foliation of a hyperbolic end as the integral curve of a time-\hsk dependent Hamiltonian vector field on the cotangent space to Teichm\"uller space, in analogy to the Moncrief flow for constant mean curvature foliations in Lorenzian space-\hsk times.
\end{abstract}
	
\maketitle

\tableofcontents

\section*{Introduction}

Let $\Sigma$ be a oriented closed surface of genus larger or equal than $2$. We will denote by $\Teich_\Sigma^\conf$ the Teichm\"uller space of $\Sigma$ defined as the space of isotopy classes of conformal structures of $\Sigma$, while $\Teich_\Sigma^\hyp$ will stand for the space of isotopy classes of hyperbolic metrics of $\Sigma$. A \emph{hyperbolic end} $E$ of topological type $\Sigma \times (0,\infty)$ is a (non-\hsk complete) hyperbolic $3$-\hsk manifold homeomorphic to $\Sigma \times (0,\infty)$, whose metric completion is obtained by adding to $E$ a locally concave pleated surface homeomorphic to $\Sigma \times \set{0} \subset \Sigma \times [0,\infty)$.

It is well known by the work of Thurston that the deformation space of $(\Proj \SL_2 \C, \C \Proj^1)$-\hsk structures (also called \emph{complex projective structures}) of $\Sigma$ is in $1$-\hsk to-\hsk $1$ correspondence with the space of hyperbolic ends homeomorphic to $\Sigma \times [0,\infty)$, modulo isometry isotopic to the identity. Through this correspondence, Thurston proved that a complex projective structure $\sigma$ on $\Sigma$ is uniquely determined by a pair $(m,\mu) \in \Teich_\Sigma^\hyp$, where $m$ is the isotopy class of the metric on $\partial E$, the pleated boundary of the hyperbolic end $E$ associated to $\sigma$, and $\mu$ is the measured lamination along which $\partial E$ is bent. Moreover, every such pair $(m,\mu)$ can be realized as the data associated to some complex projective structure. This defines a homeomorphism $\Th$, which we call the \emph{Thurston parametrization}, from the space of hyperbolic ends $\Ends(\Sigma)$ to $\Teich_\Sigma^\hyp \times \MesLam_\Sigma$, where $\MesLam_\Sigma$ denotes the space of measured laminations of $\Sigma$. 

Using a purely complex analytic approach, we can also characterize the complex projective structure $\sigma$ by its induced conformal structure $c$, together with a holomorphic quadratic differential, which measures how far is the complex projective structure $\sigma$ from the Fuchsian uniformization of $c$. In this way, we obtain another map $\Sch$, which we call the \emph{Schwarzian parametrization}, from the space of hyperbolic ends $\Ends(\Sigma)$ to the bundle of holomorphic quadratic differentials over $\Teich_\Sigma^\conf$, which can be canonically identified with the cotangent bundle of the Teichm\"uller space $\Teich_\Sigma^\conf$. Similarly to $\Th$, the function $\Sch$ is an homeomorphism between $\Ends(\Sigma)$ and $T^* \Teich_\Sigma^\conf$.

By the work of \citet{labourie1991probleme}, every hyperbolic end admits a unique foliation by convex constant Gaussian curvature surfaces, with curvature $k$ varying in $(-1,0)$. For simplicity, a surface with constant Gaussian curvature equal to $k$ will be called a $k$-\hsk surface. Using the foliation by $k$-\hsk surfaces, Labourie introduced in \cite{labourie1992surfaces} two new families of parametrizations of $\Ends(\Sigma)$, indexed by $k \in (-1,0)$. As already announced by Labourie in \cite{labourie1992surfaces}, these families of maps exhibit relations with the classical Thurston and Schwarzian parametrizations. The aim of this paper is to clarify this connection, and to present a series of results that relate the $k$-\hsk surfaces of an hyperbolic end with the geometry of its conformal boundary at infinity, on one side, and of its locally concave pleated boundary, on the other.

In order to be more precise, we need to introduce some notation and to recall the properties that $k$-\hsk surfaces satisfy. Given a hyperbolic end $E$, we will denote by $\Sigma_k$ its $k$-\hsk surface, and by $\I_k$ and $\II_k$ the first and second fundamental forms of $\Sigma_k$, respectively. Since the determinant of the shape operator $B_k$ (i. e. the extrinsic curvature of $\Sigma_k$) is equal to $k + 1 > 0$ (as a consequence of the Gauss equation), we can choose the normal vector field to $\Sigma_k$ so that the second fundamental form $\II_k = \I_k(B_k \cdot, \cdot)$ is positive definite.  We define also its third fundamental form to be $\III_k \defin \I_k(B_k \cdot, B_k \cdot)$, where $B_k$ is the shape operator of $\Sigma_k$. In this way, every $k$-\hsk surface comes with the data of three Riemannian metrics $\I_k$, $\II_k$ and $\III_k$, which satisfy the following conditions:
\begin{enumerate}[i)]
	\item the Riemannian metrics $h_k \defin (- k) \I_k$ and $h_k^* \defin \left( - \frac{k}{k + 1} \right) \III_k$ are hyperbolic, i e. they have constant Gaussian curvature equal to $-1$;
	\item if $c_k$ denotes the conformal class of $\II_k$, then the identity maps $\mappa{\id}{(\Sigma_k, c_k)}{(\Sigma_k,h_k)}$ and $\mappa{\id}{(\Sigma_k, c_k)}{(\Sigma_k,h_k^*)}$ are harmonic, with opposite Hopf differentials (as observed by \citet{labourie1992surfaces}).
\end{enumerate}
Based on these remarks, we can define Labourie's parametrizations $(\hat{\Phi}_k)_k$ and $(\hat{\Psi}_k)_k$. The map $\hat{\Phi}_k$ associates to the hyperbolic end $E$ the point of the cotangent space to Teichm\"uller space given by (the isotopy class of) the conformal structure $c_k$, together with the Hopf differential of $\mappa{\id}{(\Sigma_k, c_k)}{(\Sigma_k,h_k)}$, while the map $\hat{\Psi}_k$ sends $E$ to the pair $(m_k,m_k^*) \in (\Teich_\Sigma^\hyp)^2$ of isotopy classes of $h_k$ and $h_k^*$, respectively.

For convenience here we will consider a "normalization" $\Phi_k$ of the function $\hat{\Phi}_k$, which differ from the original map simply by the multiplication by $- \frac{2 \sqrt{k + 1}}{k}$ in the fibers of $T^* \Teich_\Sigma^\conf$. To normalize the parametrization $\hat{\Psi}_k$, we can proceed following the construction of \citet{bonsante2015a_cyclic}. First we say that two hyperbolic metrics $h$ and $h^*$ are normalized if there exists a tensor $\mappa{b}{T \Sigma}{T \Sigma}$ satisfying the following properties:
\begin{enumerate}[a)]
	\item $h^*(\cdot, \cdot) = h(b \cdot, b \cdot)$;
	\item $b$ is $h$-\hsk symmetric and it has determinant $1$;
	\item $b$ is Codazzi with respect to the Levi-\hsk Civita connection $\nabla$ of $h$. In other words, for every tangent vector fields $X$ and $Y$, we have $(\nabla_X b) Y = (\nabla_Y b) X$.
\end{enumerate}
Observe that the hyperbolic metrics $h_k$ and $h_k^*$ previously defined satisfy these conditions, with $b_k \defin \frac{1}{\sqrt{k + 1}} B_k$. By a result of Schoen \cite{schoen1993the_role}, given any pair of points $m$, $m^*$ in $\Teich_\Sigma^\hyp$, there exists a normalized pair of hyperbolic metrics $h$ and $h^*$ representing $m$ and $m^*$, respectively. Then we define
\[
\begin{matrix}
j \vcentcolon & \Teich_\Sigma^\hyp \times \Teich_\Sigma^\hyp & \longrightarrow & \R \\
& (m,m^*) & \longmapsto & \int_\Sigma \tr(b) \dd{a}_h ,
\end{matrix}
\]
where $\mappa{b}{T \Sigma}{T \Sigma}$ is the operator satisfying the conditions above with respect to $h$ and $h^*$, and $\dd{a}_h$ is the area form of $h$. The function $j$ is well defined and it is symmetric in its arguments. Given any isotopy class of metrics $m^*$, the function $L_{m^*}$, which associates to each $m \in \Teich_\Sigma^\hyp$ the value $j(m,m^*)$, will be called the \emph{hyperbolic length function of $m^*$} (the reason for this name will be explained in Section \ref{subsec:psik}). Finally, we define $\Psi_k(E)$ to be the point of the cotangent space $T^* \Teich_\Sigma^\hyp$ given by $(m_k, - \frac{\sqrt{k + 1}}{k} \dd{(L_{m_k^*})})$, where $\hat{\Psi}_k(E) = (m_k, m_k^*)$. 

Our first result relates the maps $\Phi_k$ and $\Psi_k$ to the Schwarzian and Thurston parametrizations:

\begin{theoremx} \label{thm:limit_labourie_parametrizations}
	The maps $\Phi_k$ converge to the Schwarzian parametrization as $k$ goes to $0$, and the maps $\Psi_k$ converge to $\dd L \circ \Th$ as $k$ goes to $-1$, where $\dd L$ is defined as
	\[
	\begin{matrix}
	\dd L \vcentcolon & \Teich_\Sigma^\hyp \times \MesLam_\Sigma & \longrightarrow & T^* \Teich_\Sigma^\hyp \\
	& (m,\mu) & \longmapsto & (m, \dd{(L_\mu)}_m) ,
	\end{matrix}
	\]
	and $L_\mu$ denotes hyperbolic length function of the measured lamination $\mu$.
\end{theoremx}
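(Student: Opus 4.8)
The plan is to treat the two limits separately, as they rely on quite different mechanisms, even though both reduce to understanding the degeneration of the $k$-surface $\Sigma_k$ as $k$ approaches an endpoint of $(-1,0)$.

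\medskip

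\noindent\textbf{The limit as $k \to 0$.} As $k \to 0$, the extrinsic curvature $\det B_k = k+1 \to 1$, so the shape operator $B_k$ converges to the identity and the $k$-surface behaves more and more like a horosphere-type leaf accumulating onto the conformal boundary at infinity. The strategy is to compare the rescaled first fundamental form $h_k = (-k)\I_k$ with the hyperbolic metric in the conformal class $c$ of the boundary at infinity. First I would recall (or establish) that $c_k \to c$ in $\Teich_\Sigma^\conf$ as $k \to 0$: this follows from the fact that the second fundamental forms $\II_k$, hence their conformal classes, converge to the conformal structure at infinity, a statement essentially contained in Epstein's description of the surfaces equidistant to the boundary at infinity and in Labourie's original analysis. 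Then, writing the Hopf differential of $\id\colon(\Sigma_k,c_k)\to(\Sigma_k,h_k)$ and using property (ii) above, one expands this Hopf differential in powers of $k$; the normalization factor $-\tfrac{2\sqrt{k+1}}{k}$ is precisely chosen so that the leading-order term survives in the limit. The key computation is that the first-order variation (in $k$, near $0$) of the harmonic map data recovers the Schwarzian derivative of the developing map of $\sigma$ relative to the Fuchsian uniformization — this is the analytic heart of the matter, and the place where I expect the main obstacle to lie, since it requires controlling a family of harmonic maps whose domain and target both degenerate and matching the answer with the Schwarzian parametrization $\Sch$. I would organize this via a second-order Taylor expansion of the metrics $\I_k,\II_k,\III_k$ around the boundary at infinity, using the fundamental forms of the equidistant foliation (Epstein surfaces) and the relation between the $k$-surface and a specific equidistant surface up to higher-order error.

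\medskip

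\noindent\textbf{The limit as $k \to -1$.} Here $\det B_k = k+1 \to 0$, so $\Sigma_k$ collapses onto the pleated boundary $\partial E$, and $h_k = (-k)\I_k \to \I_0$, the hyperbolic metric $m = \Th(E)_1$ on $\partial E$. Thus $m_k \to m$; this should follow from the lower-semicontinuity/monotonicity properties of the foliation and from the known convergence of the $k$-surfaces to the pleated boundary (Labourie). The substantive claim is that the cotangent vector $-\tfrac{\sqrt{k+1}}{k}\,\dd(L_{m_k^*})$ converges to $\dd(L_\mu)_m$. The idea is to analyze the dual metric $h_k^* = \bigl(-\tfrac{k}{k+1}\bigr)\III_k$ and the operator $b_k = \tfrac{1}{\sqrt{k+1}}B_k$: as $k\to -1$, $b_k$ blows up in the directions transverse to the bending lamination while staying bounded along it, and the rescaled current $\sqrt{k+1}\,b_k$ (equivalently, a suitable multiple of $B_k$) should converge, in an appropriate weak sense, to the transverse measure of the bending lamination $\mu$. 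Concretely, I would show that $\sqrt{k+1}\,\tr(b_k)\,\dd a_{h_k}$ converges to the intersection measure with $\mu$, so that $j(m_k,m_k^*)$, after the rescaling by $-\tfrac{\sqrt{k+1}}{k}$, converges to the length $L_\mu(m)$; differentiating in $m$ then yields $\dd(L_\mu)_m$. Making this convergence of measured-lamination data precise — i.e.\ that the second fundamental form $\II_k$, suitably normalized, converges to the bending measure — is the main obstacle on this side, and I would handle it by working locally: on the flat pieces of $\partial E$ the surfaces $\Sigma_k$ are nearly totally geodesic, and near the bending locus one has an explicit model (a wedge of two half-planes) whose $k$-surfaces can be computed, letting one read off the limiting transverse measure.

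\medskip

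\noindent\textbf{Assembling the statement.} Finally I would note that both convergences are uniform on compact subsets of $\Ends(\Sigma)$ (equivalently, of $T^*\Teich_\Sigma^\conf$, resp. $\Teich_\Sigma^\hyp\times\MesLam_\Sigma$), which upgrades the pointwise limits to the statement that $\Phi_k \to \Sch$ and $\Psi_k \to \dd L\circ\Th$ as maps. The two halves share the common tool of a careful asymptotic expansion of the triple $(\I_k,\II_k,\III_k)$ at the two ends of the curvature interval, so I would set up that expansion once, in a preliminary lemma, and then specialize.
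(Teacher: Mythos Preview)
Your plan for $k\to 0$ is essentially the paper's approach: an asymptotic expansion of $(\I_k,\II_k,\III_k)$ near the conformal boundary, showing $c_k\to c_0$ and then that the normalized Hopf differential limits to the Schwarzian at infinity. The paper does not redo this expansion from Epstein surfaces but instead invokes Quinn's results on asymptotically Poincar\'e families, which already provide $\lim_{k\to 0}(-k)\I_k=\lim_{k\to 0}(-k)\II_k=h_0$ together with the first-order terms $\dot h_0=-\tfrac12 h_0-\Re q_0$ and $\tfrac{d}{dk}\big|_{k=0}(-k)\II_k=0$; the limit $\Re q_k\to\Re q_0$ then drops out of a short algebraic computation, and $C^1$-uniformity comes from Quinn's implicit-function-theorem construction of the $k$-surfaces. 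Your route via Epstein surfaces would recover the same expansion, so there is no substantive difference here.

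For $k\to-1$ the paper takes a genuinely different route from yours. Rather than a direct local analysis of $B_k$ near the bending locus, it passes to the dual MGHC de Sitter spacetime $E^*$, interprets $\III_k$ as the induced metric on the dual leaf, and applies Belraouti's theorem on convergence (in the Gromov equivariant topology) of such foliations to the real tree dual to $\mu$; this gives convergence of the length spectrum of $\III_k$ to $\iota(\cdot,\mu)$. Convergence of the rescaled length $-\tfrac{\sqrt{k+1}}{k}L_{h_k^*}(h_k)\to L_\mu(m)$ is then obtained by a three-dimensional argument: one reinterprets it as the integral of the mean curvature over $\Sigma_k$ and uses convergence of the hyperbolic ends $\hat\Psi_k^{-1}(h_k,h_k^*)$ to $\Gr_\mu(h)$ (from Bonsante--Mondello--Schlenker). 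Your local-model approach could in principle replace this, but for irrational laminations the explicit wedge computation is not available and you would need a careful approximation argument.

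There is, however, a real gap in your $k\to-1$ argument that the paper's method is designed to avoid. You establish (or propose to establish) that the rescaled \emph{function} $-\tfrac{\sqrt{k+1}}{k}L_{m_k^*}$ converges to $L_\mu$ and then write ``differentiating in $m$ then yields $\dd(L_\mu)_m$''. Pointwise convergence of functions does not imply convergence of their differentials; you need at least local $C^1$-uniformity in $m$, and nothing in your outline produces that. The paper closes this gap by invoking the real-analyticity of the functions $m\mapsto L_{h'}(m)$ (established by Bonsante--Mondello--Schlenker), which upgrades the convergence to the level of derivatives. You should either cite that analyticity result or supply an independent argument for $C^1$-convergence. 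Relatedly, the paper only asserts \emph{pointwise} convergence of $\Psi_k$ to $\dd L\circ\Th$ (in contrast to the $C^1$-uniform convergence of $\Phi_k$), so your final claim of uniform convergence on compacta for both families overshoots what is actually proved.
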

\noindent The proof of this fact is based on the works of \citet{quinn2018asymptotically} and \citet{belraouti2017asymptotic}, which describe the limits of the geometric quantities associated to the $k$-\hsk surfaces $\Sigma_k$ as they approach the conformal boundary at infinity, and the locally concave pleated boundary, respectively.

Now, let $M$ be a convex co-\hsk compact hyperbolic $3$-\hsk manifold, and let $W_k$ and $\Vol_k^*$ be the functions
\[
W_k(M) \defin \Vol(M_k) - \frac{1}{4} \int_{\partial M_k} H_k \dd{a}_k, \qquad \Vol_k^*(M) \defin \Vol(M_k) - \frac{1}{2} \int_{\partial M_k} H_k \dd{a}_k ,
\]
where $M_k$ is the region of $M$ that is contained between the $k$-\hsk surfaces sitting in the ends of $M$. These volume functions share interesting properties with the dual volume of the convex core $V^*_\CC(M)$ and the renormalized volume $\Vol_R(M)$, respectively. First of all, they satisfy two Schl\"afli-\hsk type variation formulas that are the exact analogues of the ones of $V^*_\CC$ and $\Vol_R$, as shown by the following result:

\begin{theoremx} \label{thm:schlafli_formulas}
	The first order variations of the functions $W_k$ and $\Vol_k^*$ can be expressed as follows:
	\[
	\delta W_k = - \frac{1}{2} \dd{(\ext_{\mathcal{F}_k})}(\delta c_k) , \qquad \delta \Vol_k^* = - \frac{1}{2} \dd{(L_{\III_k})}(\delta \I_k) ,
	\]
	where $\mathcal{F}_k$ is the horizontal foliation of the quadratic differential $- \frac{\sqrt{k + 1}}{k} \Hopf(\mappa{\id}{(\Sigma, c_k)}{(\Sigma, h_k)})$, $\ext_{\mathcal{F}_k}$ is the extremal length function of $\mathcal{F}_k$, and $L_{\III_k} (\I_k) \defin - \frac{\sqrt{k + 1}}{k} L_{h_k^*} (h_k)$.
\end{theoremx}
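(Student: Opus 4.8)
The plan is to derive both variational formulas from a single master Schläfli-type identity for the region $M_k$ bounded by the two $k$-surfaces, and then to translate the boundary integrals into the stated differential-of-length and differential-of-extremal-length expressions using the harmonic-map structure recorded in properties i)-ii). The starting point is the classical Schläfli formula for a compact hyperbolic manifold with smooth boundary (as in Rivin-Schlenker / Krasnov-Schlenker): under a deformation of the hyperbolic structure, $\delta \Vol(M_k) = -\tfrac{1}{2}\int_{\partial M_k}\langle \delta \I_k, \II_k\rangle\, \dd a_k$, where the pairing and area form are those of $\I_k$ (one must be slightly careful, since the boundary $\partial M_k$ itself moves with $k$ and with the deformation; but the $k$-surface is canonically attached to the hyperbolic end, so the relevant variation is only the one of the ambient structure, and the normal-displacement terms cancel exactly as in the convex-core case). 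Combining this with the variation of the mean-curvature term $\int_{\partial M_k} H_k\,\dd a_k$ — computed via the standard first-variation formulas for $\dd a_k$ and for $H_k = \tr B_k$ under a change of the induced metric and second fundamental form, subject to the Codazzi and Gauss constraints that $B_k$ satisfies on a $k$-surface — should produce, after cancellation, $\delta W_k = -\tfrac14\int_{\partial M_k}\langle \delta \III_k, \1\rangle\,\dd a_{c_k}$-type expression and $\delta \Vol_k^* = -\tfrac14\int_{\partial M_k}\langle \delta \I_k, \1\rangle\,\dd a_{c_k}$-type expression; i.e. the two functions are designed precisely so that one of the two fundamental forms drops out of the boundary integrand. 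This is the same bookkeeping that makes $\Vol_R$ depend only on the conformal structure at infinity and $V^*_\CC$ only on the bending data, transported one "layer" inward to the $k$-surface.

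Next I would identify the two resulting boundary integrals with the claimed derivatives. For $\delta \Vol_k^*$: the pair $(h_k, h_k^*)$ is normalized with cosymmetric Codazzi tensor $b_k = (k+1)^{-1/2} B_k$, so by the very definition of $j$ and of the hyperbolic length function, $L_{h_k^*}(h_k) = \int_\Sigma \tr(b_k)\,\dd a_{h_k}$ up to the explicit constant $-\tfrac{k}{\sqrt{k+1}}$ relating $\I_k, \III_k$ to $h_k, h_k^*$. The first-variation formula for $L_{\III_k}$ in the direction $\delta \I_k$ is then exactly the integral of $\langle \delta \I_k, \text{something built from } B_k\rangle$ against the area form — this is the content of the computation of $\dd j$ in \cite{bonsante2015a_cyclic}, which I would invoke and re-express in terms of $\I_k$ and $\III_k$. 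Matching the constants $-\tfrac{1}{2}\cdot(-\tfrac{\sqrt{k+1}}{k})$ against those coming out of the Schläfli manipulation yields the second formula. For $\delta W_k$: here one uses property ii), that $\id\colon (\Sigma, c_k)\to(\Sigma, h_k)$ is harmonic with Hopf differential $q_k$, together with the identification (Gardiner's formula) of the derivative of extremal length $\ext_{\mathcal{F}_k}$ along a deformation $\delta c_k$ of the conformal structure as the real part of a pairing of $\delta c_k$ with the quadratic differential whose horizontal foliation is $\mathcal{F}_k$, normalized so that $\mathcal{F}_k$ is the horizontal foliation of $-\tfrac{\sqrt{k+1}}{k} q_k$. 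The Schläfli boundary integral, once written against the conformal class $c_k$, should match this pairing on the nose.

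The main obstacle, I expect, is the careful handling of the \emph{moving boundary}: unlike the convex core, whose boundary is intrinsic, the $k$-surface $\Sigma_k$ is defined by a curvature condition that is not preserved along an arbitrary deformation of the hyperbolic structure, so when we vary we must simultaneously re-select the $k$-surface. One must show that the extra contributions coming from the normal displacement of $\partial M_k$ (and from the fact that $\delta \I_k$, $\delta \II_k$ are the variations of the induced data on the \emph{re-selected} $k$-surface, not on a fixed surface pushed forward) either cancel in pairs between the two ends, or are absorbed into the curvature-normalization constraint $\det B_k = k+1$. A clean way to organize this is to work on the fixed underlying surface $\Sigma$, pull everything back via the (varying) embedding $\Sigma \hookrightarrow \Sigma_k \subset M$, and check that the Schläfli variation is insensitive to the choice of embedding because the integrand $\langle \delta \I_k, \II_k\rangle\,\dd a_k - (\text{boundary-displacement term})$ is a closed form in the relevant sense — this is precisely where the "polyhedral"/smooth Schläfli identity does the work. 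Once this invariance is established, the rest is the constant-chasing sketched above, which is routine but needs the normalizations of $\Phi_k$, $\Psi_k$, $h_k$, $h_k^*$ to be tracked consistently with the conventions fixed in the introduction.
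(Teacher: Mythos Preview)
Your overall strategy matches the paper's: both formulas come from a smooth Schl\"afli identity for $M_k$, and the boundary integral is then identified with the differential of $L_{h_k^*}$ via the variation formula for $j$ from \cite{bonsante2015a_cyclic} (for $\Vol_k^*$) and with the differential of extremal length via Gardiner's formula (for $W_k$). The two external inputs you name are exactly the ones the paper uses.

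There are, however, two concrete points where your sketch diverges from what actually happens. First, your starting Schl\"afli formula is misstated: the Rivin--Schlenker differential Schl\"afli identity reads
\[
\delta \Vol(N) \;=\; \frac{1}{2}\int_{\partial N}\Bigl(\delta H + \tfrac{1}{2}\scall{\delta \I}{\II}\Bigr)\dd a ,
\]
with a positive sign and an essential $\delta H$ term. Without that $\delta H$, the cancellations you promise when subtracting $\tfrac{1}{4}\delta\!\int H\,\dd a$ or $\tfrac{1}{2}\delta\!\int H\,\dd a$ simply do not occur. In the paper the resulting intermediate forms are
\[
\delta W(N)=\frac{1}{4}\int_{\partial N}\Bigl(\scall*{\delta \II}{\III-\tfrac{H}{2}\II}_{\II}+\frac{\delta K_e}{2K_e}H\Bigr)\dd a,
\qquad
\delta V^*(N)=\frac{1}{4}\int_{\partial N}\scall{\delta \I}{\II-H\I}\,\dd a ,
\]
not the ``$\langle\delta\III_k,\1\rangle$'' and ``$\langle\delta\I_k,\1\rangle$'' shapes you guess. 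The first of these is where the Hopf differential appears: $\III-\tfrac{H}{2}\II$ is the $\II$-traceless part of $\III$, hence a constant multiple of $\Re q_k$, and the pairing with $\delta\II_k$ is exactly $\Re\scal{q_k}{\delta c_k}$.

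Second, the moving-boundary issue that you flag as the ``main obstacle'' dissolves in one line once you have the correct $W$-variation above: on a $k$-surface $K_e=k+1$ is held fixed along the deformation, so $\delta K_e=0$ and the second term drops. No cancellation between ends, no closed-form argument, no separate tracking of normal displacement is needed; the general variation formula already absorbs the re-selection of $\Sigma_k$, and the single constraint $\delta K_e=0$ is all that the $k$-surface condition contributes.
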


It is not difficult to see that the functions $\Vol_k^*$ approximate the dual volume $\Vol_\CC^*$ as $k$ goes to $-1$, simply because the $k$-surfaces $\partial M_k$ converge to the boundary of the convex core of $M$. A similar property is satisfied by the $W_k$-\hsk volumes and the renormalized volume $\Vol_R$. Indeed, a simple corollary of the Theorems \ref{thm:limit_labourie_parametrizations} and \ref{thm:schlafli_formulas} is the following fact:

\begin{theoremx} \label{thm:renormalized_vol_limit_of_Wk}
	The renormalized volume of a quasi-\hsk Fuchsian manifold $M$ satisfies
	\[
	\Vol_R(M) = \lim_{k \to 0^-} \left( W_k(M) - \pi \abs{\chi(\partial M)} \arctanh \sqrt{k + 1} \right) .
	\]
\end{theoremx}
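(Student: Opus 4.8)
The plan is to compare $W_k$ with $\Vol_R$ through their first-order variations, and to pin down the additive constant at the Fuchsian locus. Since $\QF_\Sigma$ is connected, I would fix a quasi-Fuchsian manifold $M$, a Fuchsian manifold $M_0$, and a smooth path $\gamma$ in $\QF_\Sigma$ from $M_0$ to $M$; as $W_k$ is an honest function of the quasi-Fuchsian structure, $W_k(M) = W_k(M_0) + \int_\gamma \delta W_k$, so the theorem reduces to two points: (a) the exact value $W_k(M_0) = \pi\abs{\chi(\partial M)}\arctanh\sqrt{k+1}$ for every Fuchsian $M_0$, and (b) the convergence $\int_\gamma \delta W_k \to \Vol_R(M)$ as $k \to 0^-$. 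Granting these, $W_k(M) - \pi\abs{\chi(\partial M)}\arctanh\sqrt{k+1} = W_k(M) - W_k(M_0) = \int_\gamma \delta W_k \to \Vol_R(M)$, which is the statement.

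For (a) I would compute directly in a Fuchsian manifold $M_0 = \Hyp^3/\Gamma$. The surface equidistant from the totally geodesic surface $\Sigma_0 \subset M_0$ at distance $s$ carries the metric $\cosh^2(s)\, g_0$ and shape operator $\tanh(s)\,\1$, where $g_0$ is the hyperbolic metric of $\Sigma_0$; by the Gauss equation its Gaussian curvature is the constant $\tanh^2(s) - 1$, which equals $k$ exactly for $s = \arctanh\sqrt{k+1}$, so by uniqueness the $k$-surface in each of the two ends of $M_0$ is precisely this equidistant surface. Integrating the volume form over the region $M_k$ bounded by these two surfaces and computing $\int_{\partial M_k} H_k\,\dd{a}_k$, one finds that the mean-curvature term cancels the hyperbolic part of the volume and leaves a term linear in $\arctanh\sqrt{k+1}$, namely $W_k(M_0) = 2\pi\abs{\chi(\Sigma)}\arctanh\sqrt{k+1} = \pi\abs{\chi(\partial M)}\arctanh\sqrt{k+1}$. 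Since the renormalized volume of a Fuchsian manifold vanishes, point (b) then reads $\int_\gamma \delta W_k \to \Vol_R(M) - \Vol_R(M_0)$.

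For (b) the key step is to pass to the limit in the Schl\"afli formula of Theorem~\ref{thm:schlafli_formulas}, which reads $\delta W_k = - \frac{1}{2}\,\dd{(\ext_{\mathcal{F}_k})}(\delta c_k)$ with $\mathcal{F}_k$ the horizontal foliation of $q_k \defin - \frac{\sqrt{k+1}}{k}\Hopf(\mappa{\id}{(\Sigma, c_k)}{(\Sigma, h_k)})$. As $q_k$ is one-half of the cotangent-fiber component of $\Phi_k$, Theorem~\ref{thm:limit_labourie_parametrizations} gives that, as $k \to 0^-$, the base point $c_k$ converges to the conformal structure $c_\infty$ at infinity and $q_k$ converges to one-half of the Schwarzian of $M$ at infinity, so $\mathcal{F}_k$ converges to the horizontal foliation $\mathcal{F}_\infty$ of that limit. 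Using the continuity of extremal length and Gardiner's variation formula one obtains, uniformly along the compact path $\gamma$, that $\delta W_k \to - \frac{1}{2}\,\dd{(\ext_{\mathcal{F}_\infty})}(\delta c_\infty)$; comparing this — through Gardiner's formula — with the classical first-variation formula for the renormalized volume, expressed in terms of the Schwarzian derivatives at the two conformal ends, identifies the limiting $1$-form with $\delta \Vol_R$, the normalizing factors $- \frac{2\sqrt{k+1}}{k}$ and $- \frac{\sqrt{k+1}}{k}$ built into $\Phi_k$ and $\mathcal{F}_k$ being exactly what makes the universal constants match. Integrating along $\gamma$ and taking $k \to 0^-$ then yields $\int_\gamma \delta W_k \to \int_\gamma \delta \Vol_R = \Vol_R(M) - \Vol_R(M_0)$, which is (b).

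The hard part will be the uniformity in step (b). Theorem~\ref{thm:limit_labourie_parametrizations} as stated records only a pointwise limit, whereas exchanging $\lim_{k \to 0^-}$ with $\int_\gamma$ requires the convergence $\Phi_k \to \Sch$, and the induced convergences $c_k \to c_\infty$ and $q_k \to$ (half the Schwarzian), to hold uniformly on compact subsets of $\QF_\Sigma$. I expect this to be extractable from the asymptotic expansions for the geometry of the surfaces $\Sigma_k$ near the boundary at infinity that underpin the proof of Theorem~\ref{thm:limit_labourie_parametrizations} (the results of Quinn), together with local regularity of the extremal-length function and of its differential near $\mathcal{F}_\infty$. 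The remaining bookkeeping — verifying that the limit of the Schl\"afli formula for $W_k$ is precisely that for $\Vol_R$ — is routine but genuinely uses the specific normalizations recalled above.
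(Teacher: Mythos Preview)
Your proposal is correct and follows essentially the same route as the paper's proof: compute $W_k$ explicitly on the Fuchsian locus, show that $\dd W_k \to \dd \Vol_R$ via the Schl\"afli formulas and the convergence $\Phi_k \to \Sch$, and conclude by connectedness of $\QF(\Sigma)$. The uniformity you flag as the hard part is precisely what the paper establishes in Corollary~\ref{cor:convergence_phik} (the $\mathscr{C}^1$-uniform convergence of $\Phi_k$ to $\Sch$ on compact sets, extracted from Quinn's implicit-function-theorem argument), so your instinct about where to look is exactly right.
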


This result gives a simple description of the renormalized volume of a quasi-\hsk Fuchsian manifold $M$ in terms of the limit of the $W$-\hsk volumes of $M_k$. We underline the fact that, with this characterization, we can define the renormalized volume with a fairly simple expression in terms of the $k$-\hsk surface foliations of $M$, instead of considering the original procedure, which passes through the study of equidistant foliations of the ends of $M$.

As highlighted by the work \citet{krasnov2009symplectic}, the Schl\"afli-\hsk type variation formulas of the dual volume $\Vol_\CC^*$ and the renormalized volume $\Vol_R$ have strong implications with respect to the symplectic geometry of the spaces $T^* \Teich_\Sigma^\conf$ and $T^* \Teich_\Sigma^\hyp$, endowed with the symplectic structures $\omega^\conf$ and $\omega^\hyp$ of cotangent manifolds, respectively. Here we develop the same ideas applied to the volumes $\Vol_k^*$ and $W_k^*$, and the Labourie parametrizations $\Phi_k$ and $\Psi_k$ through the variation formulas of Theorem \ref{thm:schlafli_formulas}. In particular, we will prove:

\begin{theoremx} \label{thm:symplectic_k}
	For every $k, k' \in (-1,0)$, the function $\mappa{\Phi_k \circ \Psi_{k'}^{-1}}{(T^* \Teich_\Sigma^\hyp, \omega^\hyp)}{(T^* \Teich_\Sigma^\conf, 2 \omega^\conf)}$ is a symplectomorphism.
\end{theoremx}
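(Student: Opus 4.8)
The plan is to establish the $2$-form identity $2\,\Phi_k^*\omega^\conf=\Psi_{k'}^*\omega^\hyp$ on $\Ends(\Sigma)$: since $\Psi_{k'}$ is a diffeomorphism, pulling it back along $\Psi_{k'}^{-1}$ gives precisely $(\Phi_k\circ\Psi_{k'}^{-1})^*(2\omega^\conf)=\omega^\hyp$. Writing $\lambda^\conf$, $\lambda^\hyp$ for the tautological $1$-forms of the two cotangent bundles, so that $\omega^\conf$, $\omega^\hyp$ are, up to sign, their differentials, this reduces to exhibiting a primitive on $\Ends(\Sigma)$ of the $1$-form $2\,\Phi_k^*\lambda^\conf-\Psi_{k'}^*\lambda^\hyp$ --- which suffices because $\Ends(\Sigma)$ is contractible, being diffeomorphic via $\Sch$ to a vector bundle over $\Teich_\Sigma^\conf$. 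My candidate primitive is a \emph{mixed} renormalized volume of the slab of a hyperbolic end trapped between two of its constant curvature leaves.

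Concretely, fix $k,k'\in(-1,0)$ and, for a hyperbolic end $E$, let $E_{[k,k']}$ be the compact hyperbolic $3$-manifold with smooth strictly convex boundary $\Sigma_k\sqcup\Sigma_{k'}$ cut out of $E$ by its $k$- and $k'$-surfaces; set
\[
\mathcal{G}_{k,k'}(E)\defin \Vol\bigl(E_{[k,k']}\bigr)-\frac14\int_{\Sigma_k}H\,\dd{a}-\frac12\int_{\Sigma_{k'}}H\,\dd{a} ,
\]
where $H$ denotes the mean curvature of $\partial E_{[k,k']}$ for its outward normal, so that $\Sigma_k$ carries the $W$-type correction $-\tfrac14\int H$ and $\Sigma_{k'}$ the dual correction $-\tfrac12\int H$. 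This is a smooth function on $\Ends(\Sigma)$, and the key step is the computation of its differential. The Schl\"afli-type variation formulas of Theorem~\ref{thm:schlafli_formulas} are local along the boundary, so applying the first formula at the component $\Sigma_k$ and the second at $\Sigma_{k'}$, and keeping track of the orientations of the two boundary components, one obtains
\[
\delta\mathcal{G}_{k,k'}=-\frac12\,\dd{(\ext_{\mathcal{F}_k})}(\delta c_k)+\frac12\,\dd{(L_{\III_{k'}})}(\delta \I_{k'}) .
\]

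It then remains to recognise both terms as Liouville $1$-forms. The second is immediate from the definitions: $\Psi_{k'}(E)=\bigl(m_{k'},-\tfrac{\sqrt{k'+1}}{k'}\dd{(L_{m_{k'}^*})}\bigr)$ and $L_{\III_{k'}}(\I_{k'})=-\tfrac{\sqrt{k'+1}}{k'}L_{h_{k'}^*}(h_{k'})$, whence $\dd{(L_{\III_{k'}})}(\delta\I_{k'})=\Psi_{k'}^*\lambda^\hyp$. For the first I would invoke Gardiner's first-variation formula for the extremal length of a measured foliation, which identifies $\delta\mapsto\dd{(\ext_{\mathcal{F}_k})}(\delta c_k)$, up to a universal constant, with the pairing of $\delta c_k$ against the Hubbard--Masur quadratic differential of $\mathcal{F}_k$ at $c_k$; the latter is exactly the holomorphic Hopf differential $-\tfrac{\sqrt{k+1}}{k}\Hopf\bigl(\mappa{\id}{(\Sigma,c_k)}{(\Sigma,h_k)}\bigr)$ defining $\mathcal{F}_k$, i.e.\ one half of the fibre component of $\Phi_k(E)$, so that $\dd{(\ext_{\mathcal{F}_k})}(\delta c_k)$ is a universal multiple of $\Phi_k^*\lambda^\conf$. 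Plugging these in makes $\dd{\mathcal{G}_{k,k'}}$ a fixed nonzero combination $a\,\Phi_k^*\lambda^\conf+b\,\Psi_{k'}^*\lambda^\hyp$; exactness of $\dd{\mathcal{G}_{k,k'}}$ then forces $a\,\Phi_k^*\omega^\conf+b\,\Psi_{k'}^*\omega^\hyp=0$, so $\Phi_k\circ\Psi_{k'}^{-1}$ is a symplectomorphism onto $(T^*\Teich_\Sigma^\conf,-\tfrac{a}{b}\,\omega^\conf)$, and only the numerical identity $-a/b=2$ is left to check.

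I expect that last identity to be the sole real point of friction: it requires chasing the constant in Gardiner's formula, the factor $2$ between the quadratic differential defining $\mathcal{F}_k$ and the fibre of $\Phi_k$, and the normalizing constants $-\tfrac{2\sqrt{k+1}}{k}$ and $-\tfrac{\sqrt{k+1}}{k}$ entering the definitions of $\Phi_k$ and $\Psi_{k'}$. A reassuring cross-check is the degeneration $k\to0^-$, $k'\to-1^+$: by Theorem~\ref{thm:limit_labourie_parametrizations} the maps $\Phi_k$, $\Psi_{k'}$ converge to $\Sch$ and $\dd L\circ\Th$, the functional $\mathcal{G}_{k,k'}$ degenerates into the renormalized-volume, respectively dual-volume, functional of $E$, and the identity becomes the symplectic comparison $2\,\Sch^*\omega^\conf=(\dd L\circ\Th)^*\omega^\hyp$ between the Schwarzian and the Thurston parametrizations, for which the constant $2$ is on record (Krasnov--Schlenker, McMullen). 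One further routine point should be settled along the way, namely that the formulas of Theorem~\ref{thm:schlafli_formulas}, stated there for the ends of a convex co-compact manifold, apply verbatim to the compact slab $E_{[k,k']}$: they do, being instances of the smooth Schl\"afli formula for compact hyperbolic manifolds with smooth boundary, with the boundary integrands rewritten through the harmonic-map description of $k$-surfaces recalled in the introduction.
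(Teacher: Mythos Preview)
Your argument is correct and in fact somewhat more direct than the paper's. The paper does not work with the slab $E_{[k,k']}$ between two smooth leaves; instead it defines relative volumes $w_k$ and $v_{k'}^*$ of the region between the pleated boundary $\partial E$ and $\Sigma_k$ (respectively $\Sigma_{k'}$), shows that $\dd w_k=-\Phi_k^*\lambda^\conf+\tfrac12(\dd L\circ\Th)^*\lambda^\hyp$ and $\dd v_{k'}^*=-\tfrac12\Psi_{k'}^*\lambda^\hyp+\tfrac12(\dd L\circ\Th)^*\lambda^\hyp$, and deduces Theorem~\ref{thm:symplectic_k} by first proving (Theorem~\ref{thm:symplectic_maps}) that $\Phi_k\circ(\dd L\circ\Th)^{-1}$ and $\Psi_{k'}\circ(\dd L\circ\Th)^{-1}$ are each symplectic, then composing. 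Your functional is exactly $\mathcal G_{k,k'}=w_k-v_{k'}^*$, so the pleated-boundary contributions cancel and you land directly on $\dd\mathcal G_{k,k'}=-\Phi_k^*\lambda^\conf+\tfrac12\Psi_{k'}^*\lambda^\hyp$; this bypasses the dual Bonahon--Schl\"afli input (Theorem~\ref{thm:dual_bonahon_schlafli}) that the paper needs to handle the variation at $\partial E$ in Lemma~\ref{lem:variation_relative_volumes}. What the paper's detour buys is the extra information in Theorem~\ref{thm:symplectic_maps}, namely that the Thurston parametrization itself sits in the same symplectic picture; your route trades that for a cleaner self-contained proof of the stated theorem.

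Two small remarks. First, your worry about the constant is unwarranted: once you carry the computation through, Gardiner's formula gives $\dd(\ext_{\mathcal F_k})(\delta c_k)=2\Re\langle q_k,\delta c_k\rangle=2\,\Phi_k^*\lambda^\conf$ and, by the definitions, $\dd(L_{\III_{k'}})(\delta\I_{k'})=\Psi_{k'}^*\lambda^\hyp$, so $a=-1$, $b=\tfrac12$ and $-a/b=2$ on the nose. Second, the appeal to contractibility of $\Ends(\Sigma)$ is unnecessary: you are exhibiting an explicit primitive $\mathcal G_{k,k'}$, not merely asserting that one exists.
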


We observe that this result generalizes the previous works of \citet[Theorem~1.2]{krasnov2009symplectic} and \citet[Theorem~1.11]{bonsante2015a_cyclic}, concerning the maps $\Sch \circ (\dd L \circ \Th)^{-1}$ and $\Sch \circ \Psi_k^{-1}$, respectively. Another surprisingly simple consequence of the variation formulas of the volumes $W_k$ and $\Vol_k^*$ is the following generalization of (Krasnov and Schlenker's reformulation from \cite{krasnov2009symplectic} of) McMullen's Kleinian reciprocity Theorem:

\begin{theoremx} \label{thm:reciprocities}
	Let $M$ be a $3$-\hsk manifold with boundary whose interior admits a complete convex co-\hsk compact hyperbolic structure, and denote by $\mathscr{G}(M)$ the space of isotopy classes of such structures of $M$. We set
	\[
	\phi_k \vcentcolon \mathscr{G}(M) \longrightarrow T^* \Teich_{\partial M}^\conf, \qquad \psi_k \vcentcolon \mathscr{G}(M) \longrightarrow T^* \Teich_{\partial M}^\hyp
	\] 
	to be the maps that associate, to a convex co-\hsk compact hyperbolic structure of $M$, the points of $T^* \Teich_{\partial M}^\conf$ and $T^* \Teich_{\partial M}^\hyp$ given by the vectors $(\Phi_k(E_i))_i$ and $(\Psi_k(E_i))_i$, respectively, where $E_i$ varies among the set of hyperbolic ends of $M$. Then, for every $k \in (-1,0)$, the images $\phi_k(\mathscr{G}(M))$ and $\psi_k(\mathscr{G}(M))$ are Lagrangian submanifolds of $(T^* \Teich_\Sigma^\conf, \omega^\conf)$ and $(T^* \Teich_\Sigma^\hyp, \omega^\hyp)$, respectively.
\end{theoremx}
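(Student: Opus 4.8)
The plan is to deduce the Lagrangian property of $\phi_k(\mathscr{G}(M))$ and $\psi_k(\mathscr{G}(M))$ from the Schläfli-type variation formulas of Theorem C, exactly mimicking Krasnov–Schlenker's argument for McMullen's reciprocity. The key observation is that, by Theorem C, the $1$-forms $\delta W_k$ and $\delta\Vol_k^*$ on $\mathscr{G}(M)$ are (up to the factor $-\tfrac12$) pullbacks of the canonical Liouville forms on $T^*\Teich_{\partial M}^\conf$ and $T^*\Teich_{\partial M}^\hyp$ under $\phi_k$ and $\psi_k$ respectively. Concretely: the canonical $1$-form on $T^*\Teich^\conf$ evaluated along a variation is $p\,(\delta q)$, which in the coordinates $(c_k,\,-\tfrac{\sqrt{k+1}}{k}\Hopf(\id))$ coming from $\Phi_k$ is exactly $\dd(\ext_{\mathcal F_k})(\delta c_k)$; so $\phi_k^*\lambda^\conf = \sum_i \dd(\ext_{\mathcal F_k^i})(\delta c_k^i) = -2\,\delta W_k$, and similarly $\psi_k^*\lambda^\hyp = -2\,\delta\Vol_k^*$. (Here one must sum over the ends $E_i$ of $M$ and note that $W_k$, $\Vol_k^*$ are defined precisely as the total volume-type quantities, so their variations split as the sum of the contributions of the ends — this is where the additivity built into the definitions of $W_k$ and $\Vol_k^*$ is used.)

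First I would make precise the identification of the canonical Liouville $1$-form on the cotangent bundle with the expression appearing in Theorem C. This requires knowing that $\ext_{\mathcal F_k}$ is, as a function on $\Teich^\conf$, the "position-times-momentum" pairing associated to the point $\Phi_k(E)\in T^*\Teich^\conf$; equivalently, that under the canonical identification of holomorphic quadratic differentials with cotangent vectors, the derivative of extremal length of a fixed foliation recovers the tautological pairing. This is a standard fact (Gardiner's formula for the derivative of extremal length), and combined with Theorem C it yields $\phi_k^*\lambda^\conf = -2\,\dd W_k$ as an exact $1$-form on $\mathscr{G}(M)$. Next, exactness of $\phi_k^*\lambda^\conf$ immediately gives $\phi_k^*\omega^\conf = \phi_k^*\dd\lambda^\conf = \dd(\phi_k^*\lambda^\conf) = -2\,\dd\dd W_k = 0$, so $\phi_k(\mathscr{G}(M))$ is isotropic; the same computation with $\Vol_k^*$ shows $\psi_k(\mathscr{G}(M))$ is isotropic.

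It then remains to upgrade "isotropic" to "Lagrangian," i.e. to check the dimension count: $\dim\mathscr{G}(M) = \tfrac12\dim T^*\Teich_{\partial M}^\conf$. This follows from the fact that $\phi_k$ (resp. $\psi_k$) is an immersion — indeed each $\Phi_k$, $\Psi_k$ is a homeomorphism onto its image by Labourie's results, hence $\phi_k$ is injective with image of dimension $\dim\mathscr{G}(M)$ — together with the classical count $\dim\mathscr{G}(M) = 3\abs{\chi(\partial M)} = \dim\Teich_{\partial M}^\conf = \tfrac12\dim T^*\Teich_{\partial M}^\conf$, valid because $\partial M$ inherits the conformal boundary at infinity of $M$ and the Ahlfors–Bers/Sullivan theory identifies $\mathscr{G}(M)$ with $\Teich_{\partial M}^\conf$ (or, in the quasi-Fuchsian case, with $\Teich_\Sigma^\conf\times\overline{\Teich_\Sigma^\conf}$). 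Combining isotropy with the correct dimension gives the Lagrangian property.

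The main obstacle I expect is the first step: carefully pinning down that the $1$-form $\dd(\ext_{\mathcal F_k})(\delta c_k)$ in Theorem C is genuinely the pullback of the canonical Liouville form under $\phi_k$, as opposed to the Liouville form plus the differential of some function (which would still give isotropy, so this is not fatal) — and, more delicately, verifying the additivity of $\delta W_k$ over the ends of $M$ with the correct signs and orientations, since the regions $M_k$ between the $k$-surfaces must be assembled consistently and the normal conventions for $H_k$ on the two boundary components of a quasi-Fuchsian $M_k$ must be tracked. Once these bookkeeping points are settled, the symplectic-geometric conclusion is formal, in direct parallel with Krasnov–Schlenker \cite{krasnov2009symplectic}.
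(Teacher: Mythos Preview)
Your approach is correct and leads to the same identity the paper proves, namely $\phi_k^*\lambda^\conf = -\dd W_k$ (and $\psi_k^*\lambda^\hyp = -2\,\dd\Vol_k^*$), from which $\phi_k^*\omega^\conf = 0$ and $\psi_k^*\omega^\hyp = 0$ follow formally, with the dimension count finishing the argument. One small slip: since Gardiner's formula gives $\dd(\ext_{\mathcal{F}_k})(\delta c_k) = 2\Re\scal{q_k}{\delta c_k}$ while the Liouville form is $\Re\scal{q_k}{\delta c_k}$, the correct relation is $\phi_k^*\lambda^\conf = -\dd W_k$, not $-2\,\dd W_k$; this is harmless for the conclusion.

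The paper's proof reaches the same endpoint but by a slightly different route. Rather than invoking Theorem~C directly on $\mathscr{G}(M)$, it works end-by-end: it uses the relation $\dd w_{k,i} = -\Phi_{k,i}^*\lambda_i^\conf + \tfrac12(\dd L_i\circ\Th_i)^*\lambda_i^\hyp$ for the \emph{relative} volume $w_{k,i}$ of each end $E_i$ (Lemma~\ref{lem:liouville_forms}), then invokes the dual Bonahon--Schl\"afli formula $\dd\Vol^*_\CC = -\tfrac12\sum_i(\dd L_i\circ\Th_i)^*\lambda_i^\hyp$ to cancel the pleated-boundary terms, and finally observes that $\sum_i w_{k,i} + \Vol^*_\CC = W_k(M)$. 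Your direct route bypasses the relative volumes and the dual Bonahon--Schl\"afli formula entirely, since Theorem~C already packages the variation of $W_k$ on all of $\mathscr{G}(M)$; the paper's organization instead recycles the per-end machinery built in Section~4 for the symplectomorphism results. Both arguments are equally rigorous; yours is more economical for this particular statement, while the paper's makes the role of the convex-core boundary explicit.
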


In Section \ref{subsec:quasi_fuchsian_reciprocity} we will discuss the relations between the original McMullen's formulation of the quasi-\hsk Fuchsian reciprocity (in terms of adjoint maps) and the statement we have presented here. Theorem \ref{thm:reciprocities} generalizes \cite[Theorems~1.4,~1.5]{krasnov2009symplectic}, which state that $\Sch(\mathscr{G}(M))$ and $(\dd L \circ \Th)(\mathscr{G}(M))$ are Lagrangian submanifolds of $T^* \Teich_{\partial M}^\conf$ and $T^* \Teich_{\partial M}^\hyp$, respectively.

As last (but not least) application of the tools developed here, we prove that the $k$-\hsk surface foliations of hyperbolic ends correspond to integral curves of $k$-\hsk dependent Hamiltonian vector fields on the spaces $T^* \Teich_\Sigma^\conf$ and $T^* \Teich_\Sigma^\hyp$. This phenomenon can be interpreted as the analogous of what observed by \citet{moncrief1989reduction} for constant mean curvature foliations in $3$-\hsk dimensional Lorenzian space-\hsk times. If $\bdot{\Phi}_k$ and $\bdot{\Psi}_k$ denote the vector fields $\dv{k} \Phi_k$ and $\dv{k} \Psi_k$, respectively, then we will prove:

\begin{theoremx} \label{thm:kflow_hamiltonian}
	The $k$-\hsk dependent vector field $\bdot{\Phi}_k \circ \Phi_k^{-1}$ (resp. $\bdot{\Psi}_k \circ \Psi_k^{-1}$) is Hamiltonian with respect to the real cotangent symplectic structure of $T^* \Teich_\Sigma^\conf$ (resp. $T^* \Teich_\Sigma^\hyp$), with Hamiltonian function $- \frac{1}{8(k + 1)} m_k \circ \Phi_k^{-1}$ (resp. $- \frac{1}{2k} m_k \circ \Psi_k^{-1}$), where $\mappa{m_k}{\Ends(\Sigma)}{\R}$ sends the hyperbolic end $E$ into the integral of the mean curvature $\int_{\Sigma_k} H_k \dd{a}_k$ of its $k$-\hsk surface $\Sigma_k$.
\end{theoremx}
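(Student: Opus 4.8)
The plan is to reduce the statement to an identity of $1$-\hsk forms on $\Ends(\Sigma)$ and to check it using the evolution equations of the foliation $(\Sigma_k)_k$. I will discuss $\Phi_k$; the argument for $\Psi_k$ is parallel. First, note that $\bdot{\Phi}_k \circ \Phi_k^{-1}$ being \emph{some} Hamiltonian vector field is a formal consequence of Theorem \ref{thm:symplectic_k}: composing the symplectomorphisms $\Phi_k \circ \Psi_{k''}^{-1}$ and $\bigl( \Phi_{k'} \circ \Psi_{k''}^{-1} \bigr)^{-1}$ shows that $\Phi_k \circ \Phi_{k'}^{-1}$ preserves $\omega^\conf$ for all $k, k' \in (-1,0)$, so the time-\hsk dependent flow $k \mapsto \Phi_k \circ \Phi_{k'}^{-1}$ consists of symplectomorphisms and is generated precisely by $\bdot{\Phi}_k \circ \Phi_k^{-1}$; since $T^* \Teich_\Sigma^\conf$ is contractible, this symplectic vector field is Hamiltonian (in particular $\Phi_k^* \omega^\conf$ is independent of $k$). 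The real content is therefore to identify the Hamiltonian function.

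For this I would pull back along the diffeomorphism $\Phi_k$ and work on $\Ends(\Sigma) \times (-1,0)$ via $\Phi \colon (E,k) \mapsto \Phi_k(E)$. Writing $\lambda^\conf$ for the tautological $1$-\hsk form of $T^* \Teich_\Sigma^\conf$ and $\Phi^* \lambda^\conf = \alpha_k + f_k \dd{k}$, with $\alpha_k$ the spatial part and $f_k \defin \iota_{\partial_k} \Phi^* \lambda^\conf$, contracting $\Phi^* \omega^\conf$ with $\partial_k$ and recalling $\Phi_k(E) = (c_k, q_k)$ (here $c_k$ is the conformal class of $\II_k$ and $q_k$ the normalized Hopf differential of $\mappa{\id}{(\Sigma,c_k)}{(\Sigma,h_k)}$) turns the claim into the requirement that, for every first order variation $v$ of the hyperbolic end $E$,
\[
2 \, \mathrm{Re} \int_\Sigma \bigl[ (\partial_k q_k)(\partial_v c_k) - (\partial_v q_k)(\partial_k c_k) \bigr] = \tfrac{1}{8(k+1)} \dd{(m_k)}(v) ,
\]
where $\partial_k$ is the $k$-\hsk derivative at fixed $E$ and $\partial_v$ the derivative along $v$ at fixed $k$ (the cross terms $\int_\Sigma q_k (\partial_k \partial_v c_k - \partial_v \partial_k c_k)$ cancel by symmetry of mixed partial derivatives on $\Teich_\Sigma^\conf$).

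It then remains to evaluate the two sides. For the right-hand side I would use Theorem \ref{thm:schlafli_formulas}: combining the two Schl\"afli formulas through the relation $m_k = 4 \bigl( W_k - \Vol_k^* \bigr)$ at the level of $M_k$, the classical identity $\ext_{\mathcal{F}_k}(c_k) = \| q_k \|$, and Gardiner's formula for $\dd{(\ext_{\mathcal{F}_k})}$, the first order variation of the total mean curvature satisfies $\dd{(m_k)} = 2 ( \alpha_k + \beta_k )$ on $\Ends(\Sigma)$, where $\beta_k \defin \Psi_k^* \lambda^\hyp$ (tautological form of $T^* \Teich_\Sigma^\hyp$) corresponds to $\dd{(L_{\III_k})}(\delta \I_k)$; the identity above then reads $\partial_k \alpha_k - \dd{f_k} = \tfrac{1}{4(k+1)}( \alpha_k + \beta_k )$, and the analogous reduction for $\Psi_k$ gives $\partial_k \beta_k - \dd{g_k} = \tfrac{1}{k}( \alpha_k + \beta_k )$ with $g_k \defin \iota_{\partial_k}\Psi^*\lambda^\hyp$. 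For the left-hand side one has to describe the motion of $(c_k, q_k)$ along the $k$-\hsk flow, using: the first variation formulas $\partial_k(\dd{a}_k) = u_k H_k \dd{a}_k$ and $\partial_k H_k = \Delta_{\I_k} u_k - \bigl( \tr(B_k^2) - 2 \bigr) u_k$ for the normal flow of $\Sigma_k$ with lapse $u_k$; the lapse equation obtained by differentiating the defining relation $\det B_k = k+1$ of the foliation and substituting the Riccati equation for $\partial_k B_k$ — here the fact that $B_k$ is Codazzi yields $\divr(B_k^{-1}) = 0$, so that the second order term of the lapse equation is a pure divergence; and the relation $\partial_k \I_k = 2 u_k \II_k$, which, after passing through the harmonic map correspondence of property (ii) (and using that $\mappa{\id}{(\Sigma, c_k)}{(\Sigma, h_k^*)}$ has Hopf differential $-q_k$), expresses $\partial_k c_k$ and $\partial_k q_k$ in terms of $u_k$, $B_k$, $\alpha_k$ and $\beta_k$.

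I expect this last step, together with its interplay with the lapse equation, to be the main obstacle: one must track simultaneously the normal flows of $\I_k, \II_k, \III_k$ and $B_k$ — with lapse itself constrained, inhomogeneously, by the $k$-\hsk surface condition — push the motion through the harmonic map picture to obtain $\partial_k c_k$ and $\partial_k q_k$ in closed form, and then verify that after integrating by parts (using $\divr(B_k^{-1}) = 0$) all the curvature integrals cancel and leave exactly $\tfrac{1}{4(k+1)}(\alpha_k + \beta_k)$. That the Hamiltonian is the total mean curvature $m_k = \int_{\Sigma_k} H_k \dd{a}_k$ is forced by the identity $\partial_k(\dd{a}_k) = u_k H_k \dd{a}_k$ — the total mean curvature is what pairs with the lapse in the first variation of area — and the precise constants $\tfrac{1}{8(k+1)}$ and $\tfrac{1}{2k}$ come out of the bookkeeping of the scalars relating $\I_k, \II_k, \III_k, h_k$ and $h_k^*$ to the normalizations in the definitions of $\Phi_k$ and $\Psi_k$.
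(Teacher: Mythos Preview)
Your Cartan-formula reduction is essentially the paper's Lemma~\ref{lem:deriv_pullback_liouville_time_dep}: with $\alpha_k = \Phi_k^*\lambda^\conf$ and $f_k = \iota_{\partial_k}\Phi^*\lambda^\conf = \Re\scal{q_k}{\partial_k c_k}$, the claim becomes $\partial_k\alpha_k - \dd f_k = -\tfrac{1}{8(k+1)}\dd m_k$ on $\Ends(\Sigma)$ (your display has a sign slip). But then your plan diverges from the paper. First, there is a concrete error: the Schl\"afli formulas of Theorem~\ref{thm:schlafli_formulas} live on $\mathscr{G}(M)$, not on $\Ends(\Sigma)$; on ends you must use the relative volumes $w_k,v_k^*$, and from $m_k = 4(w_k - v_k^*)$ together with Lemma~\ref{lem:liouville_forms} one gets $\dd m_k = -4\alpha_k + 2\beta_k$, not $2(\alpha_k+\beta_k)$. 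Second, and more importantly, your proposed verification of $\partial_k\alpha_k - \dd f_k$ by tracking the lapse equation, the Riccati evolution of $B_k$, and the induced motion of $(c_k,q_k)$ through the harmonic-map picture is exactly the hard computation you flag as the ``main obstacle'' and do not carry out; that is where the proof actually lies in your scheme.

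The paper avoids all of this. Both pieces of $\partial_k\alpha_k - \dd f_k$ are controlled by the \emph{single} auxiliary function $w_k$. Since $\alpha_k = -\dd w_k + \tfrac{1}{2}(\dd L\circ\Th)^*\lambda^\hyp$ (Lemma~\ref{lem:liouville_forms}) and the second term is $k$-independent, $\partial_k\alpha_k = -\dd\bdot{w}_k$. For $f_k$, one applies Proposition~\ref{prop:variation_W_volume} to the $k$-variation at fixed $E$: here $K_e = k+1$ so $\delta K_e = 1$, and the term $\tfrac{\delta K_e}{2K_e}H$ produces exactly $\tfrac{1}{8(k+1)}m_k$, yielding $f_k = -\bdot{w}_k + \tfrac{1}{8(k+1)}m_k$. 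Subtracting, the $\bdot{w}_k$ contributions cancel and $\partial_k\alpha_k - \dd f_k = -\tfrac{1}{8(k+1)}\dd m_k$ falls out with no lapse analysis, no evolution of $q_k$, and no integration by parts. The $\Psi_k$ case is identical with $v_k^*$ and Proposition~\ref{prop:variation_dual_volume}. The idea you are missing is that the relative volumes act as generating functions whose $k$-derivative already isolates the Hamiltonian: the $\delta K_e$ term in the $W$-volume variation is the \emph{only} place where moving in $k$ differs from moving in $\Ends(\Sigma)$, and that difference is precisely $m_k$.
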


We observe that the role of the area functional in \cite{moncrief1989reduction} as Hamiltonian function here is replaced by the integral of the mean curvature, which coincides with the hyperbolic length $L_{\III_k} (\I_k)$ considered above.

As a final remark, we summarize the transitional properties that $k$-\hsk surfaces possess in relation to the boundary of the convex core and the conformal boundary at infinity of convex co-\hsk compact hyperbolic manifolds in the following table:
\begin{table}[ht]
	\centering
	\def\arraystretch{1.1}
	\begin{tabular}{|c|c|c|}
		\hline
		On $\partial \CC M$ & On $\partial M_k$ & On $\partial_\infty M$ \\
		\hline\hline
		& Conformal class $c_k = [\II_k]$ & Conformal structure $c$ \\
		\hline 
		Induced metric $m$ & First fund. form $\I_k$ & \\
		\hline
		& Measured foliation $\mathcal{F}_k$ & Measured foliation $\mathcal{F}$ \\
		\hline
		Bending measure $\mu$ & Third fund. form $\III_k$ & \\
		\hline
		& Extremal length $\ext_{\mathcal{F}_k}(c_k)$ & Extremal length $\ext_\mathcal{F}(c)$ \\
		\hline
		Hyperbolic length $L_\mu(m)$ & $L_{\III_k}(\I_k) = \int_{\partial M_k} H_k \dd{a}_k$  & \\
		\hline
		& Param. $\Phi_k$ (Cor \ref{cor:convergence_phik}) & Schwarzian param. $\Sch$ \\
		\hline
		Thurston param. $\Th$ & Param. $\hat{\Psi}_k$ (Thm \ref{thm:convergence_psi_k_hat}) & \\
		\hline
		& Volume $W_k$ & Renorm. volume $\Vol_R$ \\
		\hline
		Dual volume $V^*_\CC$ & Volume $\Vol_k^*$ & \\
		\hline
		& Thm \ref{thm:schlafli_Wk} & \cite[Thm~1.2]{schlenker2017notes}  \\
		& $\delta W_k = \frac{1}{2} \dd{(\ext_{\mathcal{F}_k})(\delta c_k)}$ & $\delta \Vol_R = -\frac{1}{2} \dd{(\ext_\mathcal{F})}(\delta c)$ \\
		\hline
		\cite[Lemma~2.2]{krasnov2009symplectic}, \cite{mazzoli2018the_dual} & Thm \ref{thm:schlafli_Vk*} & \\
		$\delta \Vol_\CC^* = -\frac{1}{2} \dd{(L_\mu)}(\delta m)$ & $\delta \Vol_k^* = - \frac{1}{2} \dd{(L_{\III_k})(\delta \I_k)}$ & \\
		\hline
		& $\phi_k(\mathscr{G}(M))$ is & McMullen's Kleinian \\
		& Lagrangian (Thm \ref{thm:reciprocities}) & reciprocity \cite{mcmullen1998complex} \\
		\hline
		$(\dd L \circ \Th)(\mathscr{G}(M))$ is & $\psi_k(\mathscr{G}(M))$ is & \\
		Lagrangian \cite[Thm~1.4]{krasnov2009symplectic} & Lagrangian (Thm \ref{thm:reciprocities}) & \\
		\hline
	\end{tabular}
\end{table}

\subsection*{Outline of the paper} 
	
In the first Section we recall the necessary background about harmonic and minimal Lagrangian maps between surfaces, the properties of $k$-\hsk surfaces and classical parametrizations of the space of hyperbolic ends, namely the Schwarzian and Thurston parametrizations. 

The second Section is dedicated to the definition of the Labourie parametrizations $\Phi_k$ and $\Psi_k$, and to the proof of Theorem \ref{thm:limit_labourie_parametrizations}, which is divided in two parts, Corollary \ref{cor:convergence_phik} and Theorem \ref{thm:convergence_psi_k_hat}.

Section 3 focuses on the Schl\"afli formulas of Theorem \ref{thm:schlafli_formulas} (see Theorems \ref{thm:schlafli_Wk} and \ref{thm:schlafli_Vk*}). While the proof of the variation of formula of $\Vol_k^*$ is essentially a combination of results extracted from the works of \citet{bonsante2013a_cyclic}, \cite{bonsante2015a_cyclic}, the variation formula of the volumes $W_k$ will require a bit more care. The main technical ingredients of our analysis will be a new way to express the variation of the $W$-\hsk volume (see Proposition \ref{prop:variation_W_volume} in the Appendix) and Gardiner's formula \cite[Theorem~8]{gardiner1984measured} for the differential of the extremal length function.  In Section \ref{subsec:renormalized_volume} we will combine the results from Section 2 with Theorem \ref{thm:schlafli_formulas} and with the Schl\"afli formula of the renormalized volume $\Vol_R$ (from \cite[Theorem~1.2]{schlenker2017notes}) to deduce a new description of the renormalized volume of a convex co-\hsk compact hyperbolic $3$-\hsk manifold in terms of the limit of the volumes $W_k$ (Theorem \ref{thm:renormalized_vol_limit_of_Wk}).

The remaining sections are dedicated to the proofs of Theorems \ref{thm:symplectic_k}, \ref{thm:reciprocities} and \ref{thm:kflow_hamiltonian}. As we will see, these results will follow as fairly elementary applications of what described in Sections 2 and 3. In Section 4 we use two "relative versions" $w_k$ and $v_k^*$ of the volumes $W_k$ and $\Vol_k^*$ to describe the pullback of the Liouville forms of $T^* \Teich_\Sigma^\conf$ and $T^* \Teich_\Sigma^\hyp$ (Lemma \ref{lem:pullback_liouville_forms}) under the Labourie parametrizations $\Phi_k$ and $\Psi_k$, respectively. As immediate consequence, we will deduce Theorem \ref{thm:symplectic_k}. In Section 5 we will prove Theorem \ref{thm:reciprocities}, whose proof is a simple application of the Schl\"afli formulas of Theorem \ref{thm:schlafli_formulas} and of the dual Bonahon-\hsk Schl\"afli formula (see \cite{krasnov2009symplectic}, \cite{mazzoli2018the_dual}). Finally, Section 6 is devoted to the proof of Theorem \ref{thm:kflow_hamiltonian}, which is based on the formulas of Lemma \ref{lem:pullback_liouville_forms} and on a elementary application of Cartan formula (see Lemma \ref{lem:deriv_pullback_liouville_time_dep}).

\subsection*{Acknowledgments}
	
I would like to thank my advisor Jean-\hsk Marc Schlenker for his help and support, and Keaton Quinn, for interesting discussions during his visit in the University of Luxembourg that helped me to develop this work.

\section{Preliminaries}

In our presentation, $\Sigma$ will always denote a closed orientable surface of genus $g \geq 2$.

\begin{definition}
	Let $\Sigma$ be a surface. Two Riemannian metrics $g$, $g'$ on $\Sigma$ are \emph{conformally equivalent} if there exists a smooth function $\alpha \in \mathscr{C}^\infty(\Sigma)$ such that $g' = e^{2 \alpha} g$. A \emph{conformal structure} $c$ on $\Sigma$ is an equivalence class of Riemannian metrics with respect to the relation above, together with a choice of a orientation of $\Sigma$. A \emph{hyperbolic metric} $h$ on $\Sigma$ is a Riemannian metric with Gaussian curvature constantly equal to $- 1$.
\end{definition}

Given any surface $\Sigma$, we will denote by $\Teich_\Sigma$ the \emph{Teichm\"uller space} of $\Sigma$. By the uniformization theorem, $\Teich_\Sigma$ can be interpreted either as the space of isotopy classes of hyperbolic metrics (complete Riemannian metrics of constant curvature $-1$), or of conformal structures on $\Sigma$. We will write $\Teich^\hyp_\Sigma$ ($\hyp$ for \emph{hyperbolic}) when we want to emphasize the first interpretation, and $\Teich^\conf_\Sigma$ ($\conf$ for \emph{conformal}) in latter case.

Given a conformal structure $c$ on $\Sigma$, we denote by $Q(\Sigma,c)$ the space of holomorphic quadratic differentials of $(\Sigma,c)$. By the Riemann-\hsk Roch theorem, $Q(\Sigma,c)$ is a vector space of complex dimension $3 g - 3$. It is well known that the cotangent space to Teichm\"uller space $\Teich_\Sigma^\conf$ at the isotopy class of $c$ can be naturally identified with the vector space $Q(\Sigma,c)$.

\subsection{Harmonic and minimal Lagrangian maps}

In what follows, we briefly recall the definitions of harmonic and minimal Lagrangian maps between hyperbolic surfaces, and the relative results that we will need in our presentation.

\begin{definition} \label{def:harmonic}
	Let $c$ and $g$ be a conformal structure and a Riemannian metric on $\Sigma$, respectively. A smooth map $\mappa{u}{(\Sigma,c)}{(\Sigma,g)}$ is \emph{harmonic} if the $(2,0)$-\hsk part of $u^* g$ with respect to the conformal structure $c$ is a holomorphic quadratic differential. In such case, we call $(u^* g)^{(2,0)}$ the \emph{Hopf differential} of $u$.
	
	Equivalently, $u$ is harmonic if there exists a (and, consequently, for any) Riemannian metric $g'$ in the conformal class $c$, such that the $g'$-\hsk traceless part of $u^* g$ is a $g'$-\hsk divergence free tensor (see \cite[p.~45-46]{tromba2012teichmuller} for the equivalence of these definitions).
\end{definition}

\begin{remark} \label{rmk:rels_hopf_traceless_part}
	If $\mappa{f}{(\Sigma,[g'])}{(\Sigma,g)}$ is harmonic with Hopf differential $q$, then the $g'$-\hsk traceless part of $f^* g$ is equal to $2 \Re q$ ($[g']$ is the conformal class of $g'$).
\end{remark}

\begin{theorem}[{See e. g. \cite{sampson1978some_properties}}]
	Let $c$ be a conformal structure on $\Sigma$. Then, for any hyperbolic metric $h$ on $\Sigma$, there exists a unique holomorphic quadratic differential $q(c,h) \in Q(\Sigma,c)$, and a unique diffeomorphism $\mappa{u(c,h)}{(\Sigma,c)}{(\Sigma,h)}$ isotopic to the identity, such that $u(c,h)$ is harmonic with Hopf differential $q(c,h)$.
\end{theorem}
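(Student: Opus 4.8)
The plan is to prove the last statement — the theorem about harmonic diffeomorphisms from a Riemann surface to a hyperbolic surface — by reducing it to the existence and uniqueness of harmonic maps into nonpositively curved targets, using Eells--Sampson for existence and Hartman (together with Schoen--Yau, or Sampson's own uniqueness) for uniqueness, and then upgrading the resulting harmonic map to a diffeomorphism. First I would fix a hyperbolic metric $g_c$ in the conformal class $c$, so that the harmonicity of a map $\mappa{u}{(\Sigma,c)}{(\Sigma,h)}$ becomes the genuine Euler--Lagrange condition of the Dirichlet energy $E(u) = \frac{1}{2}\int_\Sigma |\dd u|^2 \dd{a}_{g_c}$; this only depends on $c$ through the factor appearing in the energy density, so harmonicity is conformally invariant in the domain, consistently with Definition \ref{def:harmonic}. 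Since $(\Sigma,h)$ has negative (hence nonpositive) curvature and $\Sigma$ is closed, the Eells--Sampson theorem produces a harmonic map in the homotopy class of the identity by running the heat flow $\partial_t u = \tau(u)$ from $u_0 = \id$; the flow exists for all time and subconverges to a harmonic map $u$ homotopic to the identity. Its Hopf differential $q(c,h) \defin (u^* h)^{(2,0)}$ is holomorphic precisely because $u$ is harmonic, which is the content we want.

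Next I would establish uniqueness. Within a fixed homotopy class of maps into a closed negatively curved manifold, Hartman's theorem gives uniqueness of the harmonic representative up to the (trivial, since the target has negative curvature) freedom of composing with isometries moving along a flat factor; concretely, the harmonic map homotopic to the identity is unique. This immediately gives uniqueness of $q(c,h)$ as well. An alternative route to uniqueness, closer in spirit to the complex-analytic viewpoint of the paper, is to use Sampson's maximum principle / the Bochner formula to show that two harmonic maps homotopic to each other must coincide; I would cite \cite{sampson1978some_properties} for this, as the excerpt already does.

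Then comes the step of promoting $u$ to a diffeomorphism isotopic to the identity, which I expect to be the main obstacle if one insists on a self-contained argument. The key input is the Bochner--Weitzenböck identity for the Jacobian: writing the Hopf differential $q$ and the "anti-Hopf" part, one shows that the function $\log \mathcal{J}(u)$ (where $\mathcal{J}(u)$ is the Jacobian of $u$ with respect to $g_c$ and $h$) satisfies an elliptic inequality that, combined with the negativity of the target curvature and the maximum principle on the closed surface $\Sigma$, forces $\mathcal{J}(u) > 0$ everywhere. This is the classical theorem of Schoen--Yau and Sampson that a harmonic map of nonzero degree between closed surfaces with $\chi \le 0$ and negatively curved target is a diffeomorphism; since $u$ has degree one and is a local diffeomorphism, it is a diffeomorphism, and being homotopic to the identity it is isotopic to the identity. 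I would cite these sources rather than reproduce the computation. Assembling: existence from the heat flow, local-diffeomorphism-plus-degree-one from the Bochner argument, and uniqueness from Hartman/Sampson together give the unique pair $(u(c,h), q(c,h))$ claimed in the statement.
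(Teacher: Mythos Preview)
The paper does not actually prove this theorem: it is stated as a background result with a reference to \cite{sampson1978some_properties} and no proof environment follows. So there is nothing in the paper to compare your argument against.

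That said, your outline is correct and is exactly the standard route by which this result is established in the literature: Eells--Sampson heat flow for existence of a harmonic map in the homotopy class of the identity, Hartman's theorem (or Sampson's argument) for uniqueness in the strictly negatively curved target, and the Schoen--Yau/Sampson Jacobian argument to upgrade the degree-one harmonic map to a diffeomorphism. The only step you leave slightly implicit is the passage from ``homotopic to the identity'' to ``isotopic to the identity'' for a diffeomorphism of a closed surface; this is classical (Epstein), but worth a one-line citation if you intend the write-up to be self-contained.
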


\begin{theorem}[{\cite[Theorem~3.1]{wolf1989the_teichmuller}}] \label{thm:wolf_param}
	For every $c \in \Teich_\Sigma^\conf$, the function
	\[
	\begin{matrix}
	\varphi_c \vcentcolon & \Teich_\Sigma^\hyp & \longrightarrow & Q(\Sigma,c) \\
	& [h] & \longmapsto & q(c,h) ,
	\end{matrix}
	\]
	is a diffeomorphism.
\end{theorem}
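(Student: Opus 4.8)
The plan is to prove Wolf's theorem by exhibiting an explicit smooth two-sided inverse of $\varphi_c$ through the scalar PDE that governs identity-homotopic harmonic diffeomorphisms onto hyperbolic surfaces. \emph{First} I would settle well-definedness and smoothness of $\varphi_c$: given a hyperbolic metric $h$, the harmonic map $u(c,h)$ homotopic to the identity exists by Eells--Sampson (the target is negatively curved) and is unique by Hartman, and it is in fact a diffeomorphism by the work of Sampson and Schoen--Yau on harmonic maps between closed hyperbolic surfaces homotopic to a diffeomorphism. Applying the implicit function theorem to the (elliptic) harmonic map equation — whose linearization is the Jacobi operator, invertible because a negatively curved target carries no nontrivial Jacobi fields — shows that $u(c,h)$, hence its Hopf differential $q(c,h)=(u(c,h)^*h)^{(2,0)}$, depends smoothly on $h$. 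Since $\Teich_\Sigma^\hyp$ and $Q(\Sigma,c)$ both have dimension $6g-6$, it then suffices to produce a smooth inverse.

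\emph{Next}, fix a hyperbolic representative $\sigma$ of $c$. Writing the harmonic map in a local conformal coordinate $z$ for $c$ and letting $e^{2w}$ be the $\sigma$-density of the holomorphic energy of $u(c,h)$, the Bochner formula for harmonic maps together with the Gauss equations for the two curvature $-1$ metrics yields Wolf's equation
\[
\Delta_\sigma w \;=\; e^{2w} - \|q\|_\sigma^2\, e^{-2w} - 1 ,
\]
where $q=q(c,h)$ and $\|q\|_\sigma^2 = q\bar q/\sigma^2$. I would then show conversely that, for every $q\in Q(\Sigma,c)$, this equation admits a unique smooth solution $w_q$ — uniqueness by the maximum principle, since the right-hand side is strictly increasing in $w$; existence by the method of sub- and supersolutions on the compact surface $\Sigma$, followed by elliptic regularity — and that $w_q$ satisfies the sharp positivity $e^{4w_q}>\|q\|_\sigma^2$ away from the zeros of $q$. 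This bound guarantees that the symmetric $2$-tensor $h_q \defin e^{2w_q}\sigma + 2\Re q + \|q\|_\sigma^2\, e^{-2w_q}\,\sigma$ (written through $\sigma$ in a conformal chart) is positive definite, and a direct computation using the displayed equation shows that $h_q$ has constant curvature $-1$ and that $\mappa{\id}{(\Sigma,c)}{(\Sigma,h_q)}$ is harmonic with Hopf differential $q$.

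\emph{Finally}, I would set $\Psi_c(q)\defin[h_q]$ and check, using the uniqueness of the identity-homotopic harmonic map and the uniqueness of $w_q$, that $\Psi_c\circ\varphi_c=\id$ and $\varphi_c\circ\Psi_c=\id$, so $\varphi_c$ is a bijection; smoothness of $\Psi_c$ follows from the implicit function theorem applied to Wolf's equation, whose linearization $\Delta_\sigma - \bigl(2e^{2w_q}+2\|q\|_\sigma^2 e^{-2w_q}\bigr)$ is $\Delta_\sigma$ minus a positive function, hence an isomorphism between the relevant H\"older spaces, making $w_q$ and then $h_q$ depend smoothly on $q$. I expect the \emph{main obstacle} to be the analysis of Wolf's equation, and in particular the sharp positivity $e^{4w_q}>\|q\|_\sigma^2$ near the zeros of $q$, since this is precisely what upgrades an abstract solution of a scalar PDE to a genuine hyperbolic metric equipped with an identity-homotopic harmonic diffeomorphism rather than a merely harmonic map that could fold. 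As an alternative that avoids constructing the inverse explicitly, one could argue globally: $\varphi_c$ is injective by Wolf's equation, an immersion (its differential is an injective Jacobi-type operator), and proper (by energy estimates as $[h]$ leaves every compact set of $\Teich_\Sigma^\hyp$), so invariance of domain forces it to be a diffeomorphism onto all of $Q(\Sigma,c)$.
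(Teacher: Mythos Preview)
The paper does not give its own proof of this statement: it is quoted verbatim as \cite[Theorem~3.1]{wolf1989the_teichmuller} and used as a black box, so there is nothing in the paper to compare your argument against. Your outline is essentially a faithful sketch of Wolf's original proof --- the scalar equation $\Delta_\sigma w = e^{2w} - \|q\|_\sigma^2 e^{-2w} - 1$, existence/uniqueness via sub/supersolutions and the maximum principle, the positivity $e^{4w_q}>\|q\|_\sigma^2$ ensuring the reconstructed tensor is a metric, and smoothness via the implicit function theorem --- so as a proof plan it is sound and on target.
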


\begin{definition}[{See \cite[Proposition~1.3]{bonsante2013a_cyclic}}] \label{def:minimal_lagr}
	Let $h$ and $h'$ be two hyperbolic metrics on $\Sigma$. A diffeomorphism $\mappa{f}{(\Sigma,h)}{(\Sigma,h')}$ is \emph{minimal Lagrangian} if it is area-\hsk preserving, and its graph is a minimal surface inside $(\Sigma^2, h \oplus h')$.
	
	Equivalently, $\mappa{f}{(\Sigma,h)}{(\Sigma,h')}$ is minimal Lagrangian if there exists a conformal structure $c$ on $\Sigma$ such that $f = u(c,h') \circ u(c,h)^{-1}$ and $q(c,h') = - q(c,h)$, with the notation introduced in Definition \ref{def:harmonic}.
\end{definition}

\begin{remark}
	Using the first description of minimal Lagrangian maps, the conformal structure $c$, appearing in the second definition, can be recovered as the conformal class of the induced metric on the graph of $f$ from the metric $h \oplus h'$ (by identifying the graph of $f$ with $\Sigma$ using one of the projections onto $\Sigma$). Moreover, the projections of the graph of $f$ onto $(\Sigma,h)$ and $(\Sigma,h')$ are harmonic with respect to $c$.
\end{remark}

\begin{theorem}[{\cite{labourie1992surfaces}, \cite{schoen1993the_role}}] \label{thm:labourie_minimal_lagr}
	For every hyperbolic metric $h$ and for every isotopy class $m' \in \Teich_\Sigma^\hyp$, there exists a unique hyperbolic metric $h' \in m'$ and a unique operator $\mappa{b}{T \Sigma}{T \Sigma}$ such that:
	\begin{enumerate}[i)]
		\item $h' = h (b \cdot, b \cdot)$;
		\item $b$ is $h$-\hsk self-\hsk adjoint and positive definite,
		\item $\det b = 1$;
		\item $b$ is Codazzi with respect to the Levi-\hsk Civita connection $\nabla$ of $h$, i. e. $(\nabla_X b)Y = (\nabla_Y b)X$ for every $X$ and $Y$.	
	\end{enumerate}
\end{theorem}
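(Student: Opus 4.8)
The plan is to recast conditions i)--iv) in terms of harmonic maps, via the second characterization of minimal Lagrangian maps in Definition \ref{def:minimal_lagr}, and then to locate the correct conformal structure by a variational argument on Teichm\"uller space. Fix the hyperbolic metric $h$ and choose any representative $\tilde h' \in m'$. The key observation is that an operator $b$ satisfying i)--iv) is equivalent to a minimal Lagrangian diffeomorphism isotopic to the identity from $(\Sigma,h)$ to a hyperbolic metric in the class $m'$: condition iii) encodes that the map is area-preserving, while self-adjointness in ii) together with the Codazzi condition iv) is exactly the integrability condition making the associated graph minimal, equivalently (by Definition \ref{def:minimal_lagr}) that the induced harmonic maps have opposite Hopf differentials. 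Thus it suffices to produce a conformal structure $c$ and harmonic maps $\mappa{u(c,h)}{(\Sigma,c)}{(\Sigma,h)}$ and $\mappa{u(c,\tilde h')}{(\Sigma,c)}{(\Sigma,\tilde h')}$, both isotopic to the identity, whose Hopf differentials satisfy $q(c,\tilde h') = -q(c,h)$; the operator $b$ and the metric $h'$ are then read off at the very end.

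For existence I would minimize the total-energy functional $\mappa{\mathcal E}{\Teich_\Sigma^\conf}{\R}$ defined by $\mathcal E(c) \defin E(u(c,h)) + E(u(c,\tilde h'))$, where $E$ is the Dirichlet energy. The point of this choice is the classical fact that, for a fixed hyperbolic target, the differential of the harmonic-map energy with respect to the conformal structure is (up to sign) the Hopf differential, once one identifies $T_c^* \Teich_\Sigma^\conf$ with $Q(\Sigma,c)$; consequently, under this identification, $\dd{\mathcal E}_c = -(q(c,h) + q(c,\tilde h'))$, so the critical points of $\mathcal E$ are precisely the conformal structures realizing the minimal Lagrangian condition $q(c,\tilde h') = -q(c,h)$. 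Since the energy of a harmonic map to a fixed hyperbolic surface is a proper, bounded-below function on Teichm\"uller space, $\mathcal E$ is proper and bounded below, hence attains a minimum at some $c$, which furnishes the desired critical point.

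For uniqueness I would argue on the minimal-surface side rather than the variational one. Two operators $b_1, b_2$ solving the problem produce two minimal graphs inside $(\Sigma \times \Sigma, h \oplus \tilde h')$, both isotopic to the diagonal after correcting the target representative by the relevant isometry. Since these are two compact minimal surfaces that are graphs and are isotopic, the maximum principle for the minimal surface equation forces them to coincide: were they disjoint, one could slide one towards the other until a first interior contact point, and at a point of tangency the strong maximum principle for this quasilinear elliptic equation yields that they agree. This gives uniqueness of the minimal Lagrangian map, hence of the conformal structure $c$, of the metric $h'$, and of $b$.

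Finally, from the critical conformal structure $c$ I set $f \defin u(c,\tilde h') \circ u(c,h)^{-1}$, which is minimal Lagrangian, and define $h' \defin f^* \tilde h'$: this is a hyperbolic metric in $m'$ for which the identity $\mappa{\id}{(\Sigma,h)}{(\Sigma,h')}$ is minimal Lagrangian. The operator $b$ is then the unique $h$-self-adjoint positive definite square root of the $h$-relative endomorphism of $h'$, so that $h' = h(b\cdot,b\cdot)$ and i)--ii) hold by construction, while iii) and iv) follow from the area-preserving and minimality properties of $f$ through the translation described above. The main obstacle I anticipate is not a single isolated step but the analytic input underpinning existence: establishing that $\mathcal E$ is proper on $\Teich_\Sigma^\conf$ (which rests on the degeneration of harmonic-map energies near the boundary of Teichm\"uller space), together with the careful verification that conditions iii)--iv) are genuinely equivalent to the area-preserving and minimality properties, a computation comparing the second fundamental form of the minimal graph with the tensor $b$.
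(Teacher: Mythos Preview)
The paper does not supply a proof of this theorem: it is quoted verbatim from the literature, with attribution to \cite{labourie1992surfaces} and \cite{schoen1993the_role}, and is immediately followed by Definition~\ref{def:labourie_operator} with no intervening argument. So there is nothing in the paper to compare your proposal against.

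That said, your outline is essentially Schoen's original strategy: translate conditions i)--iv) into the minimal Lagrangian condition via Definition~\ref{def:minimal_lagr}, then find the conformal midpoint by minimising the sum of the two harmonic-map energies over $\Teich_\Sigma^\conf$, using properness of the energy to guarantee a minimum and the first-variation formula $\dd E_c = -q(c,\cdot)$ to identify critical points with the equation $q(c,h)+q(c,\tilde h')=0$. This part is sound.

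The uniqueness paragraph is the one genuine weak point. In a \emph{closed} product $(\Sigma\times\Sigma,\,h\oplus\tilde h')$ there is no height function along which to ``slide one graph towards the other until first contact'', and two homotopic closed minimal surfaces in a closed manifold need not coincide in general; the maximum principle as you invoke it does not apply without such a one-sided barrier. The standard routes here are either (a) strict convexity of $\mathcal E$ along Weil--Petersson geodesics, which forces the critical point to be unique, or (b) a second-variation/stability argument specific to minimal Lagrangian graphs in products of hyperbolic surfaces. Either replaces the sliding argument cleanly; as written, that step is a gap.
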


\begin{definition} \label{def:labourie_operator}
	Whenever we have a pair of hyperbolic metrics $h$, $h'$ and an operator $b$ as in the statement above, we say that the pair $h$, $h'$ is \emph{normalized}, and that $b$ is the \emph{Labourie operator} of the couple $h$, $h'$.
\end{definition}

It turns out that, if $h$ and $h'$ are a normalized pair of hyperbolic metrics with Labourie operator $b$, then the conformal class $c$ of the Riemannian metric $h(b \cdot, \cdot)$ is such that the maps
\[
(\Sigma,h) \stackrel{\id}{\longleftarrow} (\Sigma,c) \stackrel{\id}{\longrightarrow} (\Sigma,h')
\]
are harmonic, with opposite Hopf differentials. Therefore, Theorem \ref{thm:labourie_minimal_lagr} can be reformulated in the following way:

\begin{theorem}[{\cite{labourie1992surfaces}, \cite{schoen1993the_role}}] \label{thm:labourie_minimal_lagr1}
	The function
	\[
	\begin{matrix}
	\mathcal{H} \vcentcolon & T^* \Teich_\Sigma^\conf & \longrightarrow & \Teich_\Sigma^\hyp \times \Teich_\Sigma^\hyp \\
	& (c,q) & \longmapsto & (\varphi_c^{-1}(q), \varphi_c^{-1}(-q)) ,
	\end{matrix}
	\]
	is a diffeomorphism (here $\varphi_c$ denotes the harmonic parametrization of Theorem \ref{thm:wolf_param}).
\end{theorem}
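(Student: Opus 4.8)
The plan is to treat this statement as what it is announced to be — a repackaging of Theorem \ref{thm:labourie_minimal_lagr} — and to spend essentially all the effort on the passage between isotopy classes and genuine metrics, which is the only thing not completely formal. Concretely I would proceed in three steps: (1) show $\mathcal{H}$ is well defined and smooth; (2) construct an explicit candidate inverse $\mathcal{G} \colon \Teich_\Sigma^\hyp \times \Teich_\Sigma^\hyp \to T^* \Teich_\Sigma^\conf$ out of Theorem \ref{thm:labourie_minimal_lagr}, and check it is smooth; (3) verify $\mathcal{G} \circ \mathcal{H} = \id$ and $\mathcal{H} \circ \mathcal{G} = \id$ by unwinding the dictionary between ``harmonic maps with opposite Hopf differentials'' and ``Labourie operators'' recalled just above.

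For step (1): the Hopf differential $q(c,h)$ of the harmonic map $u(c,h)$ of Sampson's theorem depends smoothly on $(c,h)$, since $u(c,h)$ solves an elliptic system whose linearisation is invertible (the target being negatively curved), hence varies smoothly with the parameters, and so does $(u(c,h)^* h)^{(2,0)}$. Therefore the fibered map $\Phi \colon (c,[h]) \mapsto (c, q(c,h))$ from $\Teich_\Sigma^\conf \times \Teich_\Sigma^\hyp$ to $T^* \Teich_\Sigma^\conf$ is smooth; since it covers the identity of $\Teich_\Sigma^\conf$ and, by Theorem \ref{thm:wolf_param}, restricts to a diffeomorphism on each fiber, the triangular form of $d\Phi$ shows it is everywhere invertible, so $\Phi$ is a diffeomorphism. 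Writing $\iota$ for the fiberwise sign change $(c,q) \mapsto (c,-q)$ on $T^* \Teich_\Sigma^\conf$ and $\pi$ for the projection to the $\Teich_\Sigma^\hyp$ factor, one then has $\mathcal{H}(c,q) = \bigl( \pi(\Phi^{-1}(c,q)),\, \pi(\Phi^{-1}(\iota(c,q))) \bigr)$, manifestly smooth.

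For steps (2)--(3): given $(m,m')$, pick any $h \in m$; Theorem \ref{thm:labourie_minimal_lagr} produces a unique $h' \in m'$ and a unique Labourie operator $b$, and I would set $\mathcal{G}(m,m') \defin (c, q(c,h))$ with $c \defin [h(b \cdot, \cdot)]$. By the discussion following Definition \ref{def:labourie_operator}, both identity maps $(\Sigma,c) \to (\Sigma,h)$ and $(\Sigma,c) \to (\Sigma,h')$ are then harmonic with opposite Hopf differentials, so $\varphi_c^{-1}(q(c,h)) = m$ and $\varphi_c^{-1}(-q(c,h)) = m'$, i.e. $\mathcal{H} \circ \mathcal{G} = \id$. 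Independence of $\mathcal{G}$ from the choice of representative $h \in m$ follows by precomposing the minimal Lagrangian map with the isometry realising a change of representative, which transports the whole normalised triple and leaves the isotopy classes $c$ and $q$ unchanged; smoothness of $\mathcal{G}$ follows, exactly as for $\mathcal{H}$, from smooth dependence on parameters of the solution of the elliptic problem defining $b$. Finally, for $\mathcal{G} \circ \mathcal{H} = \id$ I would start from $(c,q)$, take $h$ to be the harmonic image metric with $q(c,h) = q$, use the remark after Definition \ref{def:minimal_lagr} to identify $c$ with the conformal class attached to the Labourie operator of the normalised pair $(h,h')$, and read off that $\mathcal{G}$ returns $(c, q(c,h)) = (c,q)$; the uniqueness clauses of Theorem \ref{thm:labourie_minimal_lagr} forbid any other outcome.

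I expect the only real obstacle to be the bookkeeping in step (3): making sure the ``harmonic maps with opposite Hopf differentials'' description and the ``Labourie operator'' description are carried out along the \emph{same} representatives, so that the two uniqueness statements (Wolf's, and Theorem \ref{thm:labourie_minimal_lagr}'s) can be matched; together with the smooth-dependence-on-parameters input, which is standard but is precisely what upgrades the set-theoretic bijection to a genuine diffeomorphism and is not proved in the sources quoted up to this point.
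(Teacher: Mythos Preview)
The paper does not give a proof of this theorem: it is cited from \cite{labourie1992surfaces} and \cite{schoen1993the_role}, and the only argument offered is the sentence preceding the statement, namely that for a normalized pair $(h,h')$ with Labourie operator $b$ the conformal class $c = [h(b\cdot,\cdot)]$ makes the two identity maps harmonic with opposite Hopf differentials, so that Theorem \ref{thm:labourie_minimal_lagr} ``can be reformulated'' as Theorem \ref{thm:labourie_minimal_lagr1}. Your proposal follows exactly this route --- building the inverse out of Theorem \ref{thm:labourie_minimal_lagr} via that dictionary --- but spells out the well-definedness, the smoothness, and the two composition checks that the paper leaves implicit; in that sense you are supplying more than the paper does, not less.
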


\subsection{Constant extrinsic curvature surfaces} \label{subsec:constant_extr_curvature}

Let $\Sigma$ be a (space-\hsk like) surface immersed in a Riemannian (Lorentzian) $3$-\hsk manifold $M$ of constant sectional curvature $\sec(M)$, with first and second fundamental forms $\I$ and $\II$, and shape operator $B$. We denote by $K_e$ its \emph{extrinsic curvature}, i. e. $K_e = \det B$, and by $K_i$ its \emph{intrinsic curvature}, i. e. the Gauss curvature of the Riemannian metric $\I$. For convenience, we define $\sgn(M)$ to be $+ 1$ if $M$ is a Riemannian manifold, and $-1$ is $M$ is Lorentzian. Then, the Gauss-\hsk Codazzi equations of $(\Sigma,\I,\II)$ can be expressed as follows:
\begin{gather}
K_i =  \sgn(M) \ K_e + \sec(M) , \nonumber \\
(\nabla_U B)V = (\nabla_V B)U \quad \forall U, V , \label{eq:codazzi_eq}
\end{gather}
where $U$ and $V$ are tangent vector fields to $\Sigma$, and $\nabla$ is the Levi-Civita connection of the metric $\I$. The \emph{third fundamental form} of $\Sigma$ is the symmetric $2$-\hsk tensor $\I(B \cdot, B \cdot)$.

\begin{definition}
	Let $\Sigma$ be an immersed (space-\hsk like) surface of a Riemannian (Lorentzian) $3$-\hsk manifold $M$. We say that $\Sigma$ is \emph{strictly convex} if its second fundamental form $\II$ is positive definite.
\end{definition}

\begin{remark}
	Observe that the notion of strict convexity implicitly depends on the choice of a normal vector field of $\Sigma$. Moreover, if $\Sigma$ is a strictly convex surface, then its third fundamental form is a Riemannian metric too.
\end{remark}

Let $\Sigma$ be a surface immersed in a hyperbolic $3$-\hsk manifold $M$. The Gauss equation in this case has the following form:
\begin{gather}
K_i = K_e - 1 . \label{eq:gauss_eq}
\end{gather}
Given $k \in (-1, 0)$, we say that $\Sigma$ is a \emph{$k$-\hsk surface} of $M$ if its intrinsic curvature is constantly equal to $k$. If we define the shape operator of $\Sigma$ using the normal vector field of $\Sigma$ that points to the convex side of $\Sigma$, then the second fundamental form $\II$ of $\Sigma$ has strictly positive principal curvatures, since $\det B = K_e = k + 1 > 0$. Therefore $\II$ is a positive definite symmetric bilinear form; in other words, $\Sigma$ is strictly convex.

In order to give a geometric interpretation of the third fundamental form $\III$ of $\Sigma$, we need to clarify the connection between the hyperbolic $3$-\hsk space $\Hyp^3$ and the \emph{de Sitter} $3$-\hsk space $\dS^3$, which is a Lorentzian analogue of the $3$-\hsk dimensional unit sphere $\mathbb{S}^3 \subset \R^4$. Let $\R^{3,1}$ denote the $4$-\hsk dimensional Minkowski space, i. e. the vector space $\R^4$ endowed with a Lorentzian scalar product of signature $(3,1)$. Then the hyperbolic space $\Hyp^3$ can be viewed as (a connected component of) the set of vectors $x \in \R^{3,1}$ satisfying $\scal{x}{x} = - 1$. Similarly, the de Sitter space is defined as the set of vectors $y$ satisfying $\scal{y}{y} = 1$. The projection of $\R^4$ onto the projective space $\R \Proj^3$ sends the light cone into the quadric $\Proj \set{x \mid \scal{x}{x} = 0}$ of $\R \Proj^3$. The polarity correspondence determined by this quadric allows to construct, starting from a strictly convex surface $\widetilde{\Sigma}$ in $\Hyp^3$, an associated strictly convex surface $\widetilde{\Sigma}^*$ in $\dS^3$, whose points are the polar-\hsk duals of the tangent spaces to the surface $\widetilde{\Sigma}$ in $\Hyp^3$. Moreover, if $\widetilde{\Sigma}$ is the lift to $\Hyp^3$ of a surface $\Sigma$ sitting inside some hyperbolic end $E$ (see Definition \ref{def:hyp_end}), then $\widetilde{\Sigma}$ determines a space-\hsk like surface $\Sigma^*$ inside a \emph{maximal global hyperbolic spatially compact de Sitter spacetime} $E^*$ (see e. g. \cite{mess2007lorentz} for details). Finally, in this description, the first fundamental form $\I^*$ of $\Sigma^*$ coincides with the tensor $\III$, and the second fundamental forms $\II^*$ and $\II$ are essentially the same ($\II = \pm \II^*$, depending on the conventions). We refer to \cite{schlenker2002hypersurfaces} for more detailed description of this correspondence.

Now, the Gauss equation of the dual surface $(\Sigma^*,\I^*,\II^*)$ is
\begin{equation}
K_i^* = - K_e^* + 1 . \label{eq:dual_gauss_eq}
\end{equation}
Since the shape operator $B^*$ of $\Sigma^*$ coincides with $\pm B^{-1}$, the surface $(\Sigma,\I,\II)$ has extrinsic curvature $K_e$ if and only if $(\Sigma^*, \I^*, \II^*)$ has extrinsic curvature $K_e^* = K_e^{-1}$. Combining this fact with the Gauss equations \eqref{eq:gauss_eq} and \eqref{eq:dual_gauss_eq}, we see that, if $\Sigma$ is a $k$-\hsk surface, then the tensors
\[
h \defin - k \ \I  \quad \text{and} \quad h^* \defin - \frac{k}{k + 1} \ \III
\]
are Riemannian metrics of constant curvature $-1$. We also set $c$ to be the conformal class of $\II$.

\begin{lemma}
	Let $\Sigma$ be a strictly convex surface immersed in a Riemannian (or Lorentzian) $3$-\hsk manifold $M$. The following are equivalent:
	\begin{itemize}
		\item the surface $\Sigma$ has constant extrinsic curvature;
		\item the identity map $\mappa{\id}{(\Sigma,c)}{(\Sigma,\I)}$ is harmonic.
	\end{itemize}
	\begin{proof}
		The Levi-Civita connection $\nabla^{\II}$ of the Riemannian metric $\II$ satisfies
		\[
		\nabla^{\II}_U V = \nabla_U V + \frac{1}{2} B^{-1} (\nabla_U B) V ,
		\]
		where $U$ and $V$ are tangent vector fields to $\Sigma$, and $\nabla$ is the Levi-Civita connection of $\I$. This relation can be easily proved by showing that the right-\hsk hand side, as a function of $U$ and $V$, defines a connection which is torsion-\hsk free and compatible with $\II$.
		The first property follows from the fact that $\nabla$ is torsion-\hsk free, and from the Codazzi equation \eqref{eq:codazzi_eq} satisfied by $B$. The compatibility with respect to $\II$ can be derived by the compatibility of $\nabla$ with respect to $\I$, and by the fact that $B$ is $\I$-\hsk self-\hsk adjoint. 
		
		The map $\mappa{\id}{(\Sigma,\II)}{(\Sigma,\I)}$ is harmonic if and only if the $\II$-\hsk traceless part of $\I$ is the real part of a holomorphic quadratic differential. By the results of \cite[Chapter~2]{tromba2012teichmuller}, this is equivalent to saying that $\I - \frac{H}{2 K_e} \II$ is divergence free with respect to $\nabla^{\II}$ and traceless with respect to $\II$ (which is true by definition).
		Using the expression of $\nabla^{\II}$ above, we can prove that
		\[
		\divr_{\II}\left(\I - \frac{H}{2 K_e} \II \right) = - \frac{1}{2} \dd(\ln K_e) .
		\]
		From this equation the statement is clear.
	\end{proof}
\end{lemma}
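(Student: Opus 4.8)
The plan is to reduce the harmonicity condition to a divergence identity and then read off the constancy of $K_e$ from an explicit computation. Since harmonicity of the identity depends only on the conformal class of the source, I would replace the conformal structure $c$ by the representative $\II$ and work with $\mappa{\id}{(\Sigma,\II)}{(\Sigma,\I)}$. By the second characterization in Definition~\ref{def:harmonic} (the traceless/divergence-free formulation from \cite{tromba2012teichmuller}), this map is harmonic precisely when the $\II$-traceless part of $\I$ is divergence free with respect to the Levi-Civita connection $\nabla^{\II}$ of $\II$. Because $\II=\I(B\cdot,\cdot)$, the endomorphism associated to $\I$ via $\II$ is $B^{-1}$, so $\tr_{\II}\I=\tr(B^{-1})=H/K_e$ with $H=\tr B$; hence the $\II$-traceless part of $\I$ is exactly $\I-\frac{H}{2K_e}\II$, and harmonicity is equivalent to $\divr_{\II}\!\left(\I-\frac{H}{2K_e}\II\right)=0$.

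The next step is to obtain a usable formula for $\nabla^{\II}$. I would take the Ansatz
\[
\nabla^{\II}_U V \defin \nabla_U V + \tfrac12\, B^{-1}(\nabla_U B)V
\]
and verify it is the Levi-Civita connection of $\II$ by checking its two defining properties. Torsion-freeness, after subtracting $\nabla^{\II}_V U$ and $[U,V]$, reduces to $B^{-1}\big((\nabla_U B)V-(\nabla_V B)U\big)=0$, which is the Codazzi equation \eqref{eq:codazzi_eq}. Compatibility with $\II$ follows from $\nabla\I=0$ together with the $\I$-self-adjointness of $B$ (both are needed to see that the correction term behaves correctly against $\II$). Uniqueness of the Levi-Civita connection then identifies the Ansatz with $\nabla^{\II}$.

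Finally I would carry out the divergence computation. Since $\nabla^{\II}\II=0$ one gets $\divr_{\II}\!\left(\frac{H}{2K_e}\II\right)=\dd\!\left(\frac{H}{2K_e}\right)$, so everything reduces to $\divr_{\II}\I$. Writing $\nabla^{\II}=\nabla+A$ with $A_UV=\frac12 B^{-1}(\nabla_U B)V$ and using $\nabla\I=0$, one has $(\nabla^{\II}_U\I)(V,W)=-\I(A_UV,W)-\I(V,A_UW)$; tracing over $U,V$ with $\II$ and simplifying using the symmetry of $B$ and the Codazzi equation, together with the Jacobi-type identity $\tr\!\big(B^{-1}(\nabla_U B)\big)=\dd(\ln\det B)(U)=\dd(\ln K_e)(U)$, should collapse the expression to
\[
\divr_{\II}\!\left(\I-\frac{H}{2K_e}\II\right)=-\tfrac12\,\dd(\ln K_e).
\]
From this the lemma is immediate: the right-hand side vanishes identically if and only if $K_e$ is constant, i.e.\ exactly when $\Sigma$ has constant extrinsic curvature. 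I expect the main obstacle to be precisely this last computation — keeping track of the difference tensor $A$, fixing the correct convention for the divergence of a symmetric $2$-tensor, and recognizing the $\dd\ln\det B$ term. Everything preceding it is formal.
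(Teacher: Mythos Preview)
Your proposal is correct and follows essentially the same route as the paper: reduce harmonicity to the $\II$-divergence-freeness of the traceless part $\I-\frac{H}{2K_e}\II$, identify $\nabla^{\II}$ via the same Ansatz (verified by Codazzi and self-adjointness of $B$), and compute $\divr_{\II}\big(\I-\frac{H}{2K_e}\II\big)=-\tfrac12\,\dd(\ln K_e)$. The paper is terser on the final divergence step, but your added remarks (using $\nabla^{\II}\II=0$ and the identity $\tr(B^{-1}\nabla_U B)=\dd(\ln K_e)(U)$) are exactly the ingredients that make that computation go through.
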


This fact proves that both the maps
\[
(\Sigma,h) \stackrel{\id}{\longleftarrow} (\Sigma,c) \stackrel{\id}{\longrightarrow} (\Sigma,h^*)
\]
are harmonic. If $\hat{T}$ denotes the $\II$-\hsk traceless part of the symmetric tensor $T$, then we have
\[
\hat{h} = - k \left( \I - \frac{H}{k + 1} \II \right), \qquad \hat{h}^* = - \frac{k}{k + 1} \left( \III - H \II \right) .
\]
Using the relation $B^2 - H B + K_e \1 = 0$, we see that $\hat{h} = - \hat{h}^*$. This shows that the two identity maps above have opposite Hopf differentials or, equivalently, that the map $\mappa{\id}{(\Sigma,h)}{(\Sigma,h^*)}$ is minimal Lagrangian (see Definition \ref{def:minimal_lagr}).

\subsection{The space of hyperbolic ends}

\begin{definition} \label{def:hyp_end}
	Given $\Sigma$ a closed surface, a \emph{hyperbolic end} $E$ of topological type $\Sigma \times [0, \infty)$ is a hyperbolic $3$-\hsk manifold with underlying topological space $\Sigma \times (0,\infty)$ and whose metric completion $\overline{E} \cong \Sigma \times [0,\infty)$ is obtained by adding to $E$ a locally concave pleated surface $\Sigma \times \set{0} \subset \Sigma \times [0,\infty)$. We will denote by $\partial E$ the locally concave pleated boundary of $E$.
	
	Two hyperbolic ends $E = (\Sigma \times (0,\infty), g)$ and $E' = (\Sigma \times (0,\infty), g')$ are equivalent if there exists an isometry between them that is isotopic to $\id_{\Sigma \times (0,\infty)}$. We set $\Ends(\Sigma)$ to be the space of equivalence classes of hyperbolic ends of topological type $\Sigma \times (0,\infty)$.
\end{definition}

Let $E$ be a hyperbolic end. The manifold $\overline{E} \cong \Sigma \times [0,\infty)$ can be compactified by adding a topological surface "at infinity" $\partial_\infty E \defin \Sigma \times \set{\infty}$. The $(\Iso^+(\Hyp^3),\Hyp^3)$-\hsk structure on $E$ naturally determines a $(\Proj \SL_2 \C, \C \Proj^1)$-\hsk structure (also called \emph{complex projective structure}) $\sigma_E$ on $\partial_\infty E$, coming from the action of $\Iso^+(\Hyp^3) \cong \Proj \SL_2 \C$ on the boundary at infinity $\partial_\infty \Hyp^3 \cong \C \Proj^1$. 

By a classical construction due to Thurston, it is possible to invert this process: given a complex projective structure $\sigma$ on a surface $\Sigma$, there exists a hyperbolic end $E$ of topological type $\Sigma \times (0,\infty)$ whose induced complex projective structure on $\partial_\infty E$ coincides with $\sigma$. The universal cover $\widetilde{E}$ of $E$ can be locally described as the envelope of those half-\hsk spaces $H$ of $\Hyp^3$ satisfying $\overline{H} \cap \partial_\infty \Hyp^3 = D$, where $D$ varies over the developed maximal discs of $(\widetilde{\Sigma}, \tilde{\sigma})$ in $\partial_\infty \Hyp^3 = \C \Proj^1$. This construction establishes a one-\hsk to-\hsk one correspondence between the space of hyperbolic ends $\Ends(\Sigma)$ and the deformation space of complex projective structures on $\Sigma$. We refer to \cite{kamishima1992deformation} for a more detailed exposition of Thurston's construction.

\subsubsection*{The Schwarzian parametrization}

Let $E$ be a hyperbolic end. Following the notation introduced above, we denote by $c_0$ the underlying conformal structure of $\sigma_E$, and by $\sigma_0$ the "Fuchsian structure" of $c_0$, i. e. the complex projective structure on $\Sigma = \partial_\infty E$ determined by the uniformization map of $(\widetilde{\Sigma},\tilde{c}_0)$. The set of complex projective structures with underlying conformal structure $c_0$ can be interpreted as an affine space over the space of holomorphic quadratic differentials of $(\Sigma,c_0)$, and the correspondence sends each element $\sigma - \sigma_0$ into the \emph{Schwarzian derivative} of $\sigma$ with respect to $\sigma_0$ (see \cite{dumas2009complex} for details). In particular, the element $\sigma_E - \sigma_0$ determines a unique holomorphic quadratic differential $q_0$ of $(\Sigma,c_0)$, called the \emph{Schwarzian at infinity} of $E$. The resulting map
\[
\begin{matrix}
\Sch \vcentcolon & \Ends(\Sigma) & \longrightarrow & T^* \Teich_\Sigma^\conf \\
& [E] & \longmapsto & (c_0,q_0) ,
\end{matrix}
\]
gives a parametrization of the space of hyperbolic ends $\Ends(\Sigma)$, which we will call the \emph{Schwarzian parametrization}.

\subsubsection*{The Thurston parametrization}

The Schwarzian parametrization of the space of hyperbolic ends $\Ends(\Sigma)$ uses the geometric structure of the boundary at infinity $\partial_\infty E$ of $E$. In the following we will describe a analogous construction, due to Thurston, involving the shape of the convex pleated boundary $\partial E$, instead of $\partial_\infty E$.

The surface $\partial E$ is a topologically embedded surface in $E$, which is almost everywhere totally geodesic. The set of points where $\partial E$ is not locally shaped as an open set of $\Hyp^2$ is a closed subset $\lambda$ that is disjoint union of simple (not necessarily closed) complete geodesics. The path metric of $\partial E$ is an actual hyperbolic metric $h \in \Teich_\Sigma^\hyp$, and the structure of the singular locus $\lambda$ can be described using the notion of \emph{tranverse measured lamination}. In the simple case of $\lambda$ composed by disjoint simple closed geodesics, each leaf $\gamma_i$ of $\lambda$ has an associated exterior dihedral angle $\vartheta_i$, which measures the bending between the totally geodesic portions of $\partial E$ meeting along $\gamma_i$. Given any geodesic arc $\alpha$ transverse to $\lambda$, we can define the transverse measure $\mu \defin \sum_i \vartheta_i \ \gamma_i$ along a geodesic segment $\alpha$ to be the sum $\sum_i \vartheta_i \ i(\gamma_i,\alpha)$, where $i(\gamma_i,\alpha)$ is the geometric intersection between $\alpha$ and $\gamma_i$. Using an approximation procedure, we can generalize the construction above to a generic support $\lambda$, obtaining a measured lamination $\mu \in \MesLam_\Sigma$, which measures the amount of bending that occurs transversely to $\lambda$. The datum of the hyperbolic metric $h$ and the measured lamination $\mu$ is actually sufficient to describe the entire hyperbolic end. In other words, the map
\[
\begin{matrix}
\Th \vcentcolon & \Ends(\Sigma) & \longrightarrow & \Teich^\hyp_\Sigma \times \MesLam_\Sigma \\
& [E] & \longmapsto & (h, \mu)
\end{matrix}
\]
parametrizes the space of hyperbolic ends (for a detailed proof of this result, see \cite[Section~2]{kamishima1992deformation}). We will call $\Th$ the \emph{Thurston parametrization} of $\Ends(\Sigma)$.

\section{Foliations by \texorpdfstring{$k$}{k}-\hsk surfaces}

This Section is mainly devoted to the description of two families of parametrizations of the space of hyperbolic ends $\Ends(\Sigma)$, denoted by $(\Phi_k)_k$ and $(\Psi_k)_k$, firstly introduced by \citet{labourie1992surfaces}, and further investigated by Bonsante, Mondello and Schlenker in \cite{bonsante2013a_cyclic} and \cite{bonsante2015a_cyclic}. After having recalled the necessary background, we will establish a connection between the asymptotic of these maps and the classical Schwarzian and Thurston parametrizations, applying the recent works of \citet{quinn2018asymptotically} and \citet{belraouti2017asymptotic}, respectively.

\begin{theorem}[{\cite[Théorème~2]{labourie1991probleme}}] \label{thm:existence_k_foliation}
	Every hyperbolic end has a unique foliation by $k$-surfaces $\Sigma_k$, with $k$ varying in $(-1,0)$. As $k$ goes to $-1$, the $k$-surface $\Sigma_k$ approaches the concave pleated boundary of $E$, and as $k$ goes to $0$, $\Sigma_k$ approaches the conformal boundary at infinity of $E$.
\end{theorem}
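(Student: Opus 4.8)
The plan is to recast the statement as a prescribed extrinsic curvature problem and then combine a continuity-and-barrier argument with the geometric maximum principle. By the Gauss equation \eqref{eq:gauss_eq}, a $k$-surface is exactly a strictly convex surface whose extrinsic curvature $K_e$ is constantly equal to $a \defin k + 1 \in (0,1)$; its principal curvatures are then positive, so the prescribed-curvature operator $B \mapsto \det B$ is uniformly elliptic along it. It therefore suffices to produce, for every $a \in (0,1)$, a unique embedded strictly convex surface $\Sigma_a \subset E$ isotopic to a slice $\Sigma \times \set{t}$ with $\det B \equiv a$, and then to verify that the family $(\Sigma_a)_a$ is nested and exhausts $E$.

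For existence at a fixed $a$ I would run a continuity method on the set $A \subset (0,1)$ of values realized by such a surface. Openness of $A$ follows from the implicit function theorem: the linearization of $\det B - a$ under normal graphs is a second-order elliptic operator of Jacobi type whose zeroth-order term has the sign forced by the convexity of $\Sigma_a$ and the negativity of the ambient curvature, hence it is invertible and a leaf of curvature $a$ deforms to leaves of all nearby curvatures. Nonemptiness and closedness rest on barriers and a priori estimates. As barriers I would use equidistant surfaces: those at small distance from the locally concave pleated boundary $\partial E$ are nearly flat, with $K_e$ close to $0$, while the horosphere-like surfaces far out towards the boundary at infinity have $K_e$ close to $1$; since $0 < a < 1$, these trap the desired surface between two convex surfaces bounding a compact region of $E$. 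The analytic core, and the step I expect to be the main obstacle, is the uniform estimate on the second fundamental form: a strictly convex constant-$K_e$ surface confined between the barriers has principal curvatures bounded above and bounded away from $0$ by constants depending only on $a$ and the barrier geometry. This comes from applying the maximum principle to the largest principal curvature, using the Codazzi equation \eqref{eq:codazzi_eq} and the relation $\det B = a > 0$ to control the quadratic terms; once these $C^2$ bounds are in place, Schauder theory promotes them to uniform $C^\infty$ bounds and a compactness argument yields subconvergence to a limit surface that is still strictly convex, closing $A$.

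Uniqueness, together with the nesting of the leaves, follows from the geometric maximum principle. Given two homotopic convex surfaces of the same constant extrinsic curvature $a$, I would slide one towards the other until a first interior contact point; there they are tangent with one lying on the convex side of the other, so their shape operators are ordered while both have determinant $a$, and the ellipticity of $\det$ on the convex cone forces agreement to second order, hence, by the strong maximum principle, the two surfaces coincide. Applying the same comparison to curvatures $a < a'$ shows that $\Sigma_a$ lies strictly on the $\partial E$-side of $\Sigma_{a'}$, i.e.\ smaller extrinsic curvature places the leaf closer to the pleated boundary. Thus $a \mapsto \Sigma_a$ is strictly monotone and the leaves are pairwise disjoint, and by the openness step the dependence on $a$ is smooth.

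It remains to see that $(\Sigma_a)_{a \in (0,1)}$ is a smooth foliation with the asserted endpoints. The union $\bigcup_a \Sigma_a$ is open, since each leaf has a neighborhood swept out by the nearby leaves, and nestedness makes $a \mapsto \Sigma_a$ a monotone continuous sweep from the $\partial E$-side to the infinity-side of $E$. That this sweep covers all of $E$ is controlled by the barriers at the two ends of the parameter interval: comparison with the equidistant surfaces at $\partial E$ forces $\Sigma_a$ into neighborhoods of $\partial E$ shrinking to zero as $a \to 0$, so $\Sigma_a \to \partial E$ in the Hausdorff sense (this is the limit $k \to -1$), while comparison with the horosphere-like surfaces near infinity forces $\Sigma_a$ to leave every compact subset of $E$ as $a \to 1$, so the leaves converge to $\partial_\infty E$ (the limit $k \to 0$). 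Combining the monotone continuity of $a \mapsto \Sigma_a$ with these two limiting behaviours and the product structure $E \cong \Sigma \times (0,\infty)$ shows that every point of $E$ lies on exactly one $\Sigma_a$, which is the desired foliation by $k$-surfaces.
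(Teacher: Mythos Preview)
The paper does not prove this theorem; it is simply quoted from \cite{labourie1991probleme} and used as a black box throughout. There is therefore no ``paper's own proof'' to compare against, and nothing in the present manuscript depends on how the statement is established.

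That said, your sketch is broadly in the spirit of Labourie's argument, but it glosses over the two genuinely hard points. First, your barriers near $\partial E$ are problematic: the pleated boundary is not $C^1$, so equidistant surfaces from it are not smooth either, and you cannot directly read off that their extrinsic curvature is ``close to $0$''. Labourie handles this by working instead with the smooth convex surfaces already inside the end (or, in other treatments, via a careful smoothing/approximation of the pleated boundary) rather than with naive equidistants. Second, the a~priori $C^2$ bound is the heart of the matter and is not obtained by the one-line maximum-principle argument you indicate; Labourie's original proof relies on a compactness theorem for pseudo-holomorphic curves (the space of $k$-surfaces being identified with holomorphic curves in a suitable almost-complex bundle), which is what actually replaces the missing curvature estimate. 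Your outline would become a proof only after filling these two gaps, and neither is routine.
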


Before describing the maps $(\Phi_k)_k$ and $(\Psi_k)_k$, we need to introduce some notation that we will useful (and used) in the rest of the paper. Given $E$ a hyperbolic end, with $k$-\hsk surface foliation $(\Sigma_k)_k$, we let $\I_k$, $\II_k$ and $\III_k$ denote the first, second and third fundamental forms of $\Sigma_k$. Moreover, we set $h_k$ and $h_k^*$ to be the hyperbolic metrics $- k \ \I_k$ and $- \frac{k}{k + 1} \ \III_k$, respectively, and $c_k$ to be the conformal class of $\II_k$. Finally, we will denote by $q_k$ the holomorphic quadratic differential
\[
- \frac{2 \sqrt{k + 1}}{k} \Hopf((\Sigma_k,c_k) \rightarrow (\Sigma_k,h_k)) = \frac{2 \sqrt{k + 1}}{k} \Hopf((\Sigma_k,c_k) \rightarrow (\Sigma_k,h_k^*)) .
\]
The choice of the multiplicative constant in the definition of $q_k$ may look arbitrary at this point of the exposition, but it will be crucial in the following (see for instance Corollary \ref{cor:convergence_phik} and Remark \ref{rmk:multipl_constant}). The holomorphic quadratic differential $q_k$ satisfies
\begin{equation} \label{eq:hopf_diff_I_and_III}
2 \Re q_k  = 2 \sqrt{k + 1} \left( \I_k - \frac{H_k}{2(k + 1)} \II_k \right) = - \frac{2}{\sqrt{k + 1}} \left( \III_k - \frac{H_k}{2} \II_k \right) .
\end{equation}
For future references, we also observe that the area forms with respect to $\I_k$ and $\II_k$ differ by a multiplicative constant, as follows:
\begin{equation} \label{eq:area_forms}
\dd{a}_{\I_k} = \frac{1}{\sqrt{\det B_k}} \dd{a}_{\II_k} = \frac{1}{\sqrt{k + 1}} \dd{a}_{\II_k} .
\end{equation}

\subsection{The parametrizations \texorpdfstring{$\Phi_k$}{Phik}} \label{subsec:phik}

The first class of parametrizations described by \citet{labourie1992surfaces} is given by the following maps: for every $k \in (-1,0)$ we define the function
\[
\begin{matrix}
\Phi_k \vcentcolon & \Ends(\Sigma) & \longrightarrow & T^* \Teich_\Sigma^\conf \\
& [E] & \longmapsto & (c_k,q_k) ,
\end{matrix}
\]
which associates, to every hyperbolic end $E$, the point of the cotangent space to Teichm\"uller space $(c_k,q_k)$ determined by the unique $k$-surface $\Sigma_k$ contained in $E$, as above. We have:

\begin{theorem}[{\cite[Théorème~3.1]{labourie1992surfaces}}] \label{thm:labourie_parametrization}
	The function $\Phi_k$ is a diffeomorphism for every $k \in (-1,0)$.
\end{theorem}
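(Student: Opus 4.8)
The plan is to factor $\Phi_k$ through maps whose invertibility is already established in the excerpt, so that $\Phi_k$ appears as a composition of diffeomorphisms. The natural candidate is the map $\mathcal{H} \colon T^*\Teich_\Sigma^\conf \to \Teich_\Sigma^\hyp \times \Teich_\Sigma^\hyp$ of Theorem \ref{thm:labourie_minimal_lagr1}, which sends $(c,q)$ to the pair $(\varphi_c^{-1}(q), \varphi_c^{-1}(-q))$. Indeed, by the computations in Section \ref{subsec:constant_extr_curvature}, for a $k$-surface $\Sigma_k$ the maps $\id\colon(\Sigma_k,c_k)\to(\Sigma_k,h_k)$ and $\id\colon(\Sigma_k,c_k)\to(\Sigma_k,h_k^*)$ are harmonic with opposite Hopf differentials, and the Hopf differential of the first is (up to the chosen normalizing constant $-\tfrac{2\sqrt{k+1}}{k}$) exactly $q_k$. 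Hence $\mathcal H(\Phi_k(E)) = \hat\Psi_k(E) = (m_k, m_k^*)$, i.e. $\mathcal H \circ \Phi_k = \hat\Psi_k$, where $\hat\Psi_k$ records the isotopy classes of $h_k$ and $h_k^*$. So it suffices to prove that $\hat\Psi_k\colon \Ends(\Sigma) \to \Teich_\Sigma^\hyp\times\Teich_\Sigma^\hyp$ is a diffeomorphism; then $\Phi_k = \mathcal H^{-1}\circ\hat\Psi_k$ is one too.

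To analyze $\hat\Psi_k$, first I would set up the \emph{inverse} direction geometrically. Given a normalized pair $(h,h^*)$ of hyperbolic metrics with Labourie operator $b$ (Theorem \ref{thm:labourie_minimal_lagr}), one builds a candidate $k$-surface by reversing the relations $h_k = -k\,\I_k$, $\III_k = h_k^*(\cdot,\cdot)\cdot\tfrac{k+1}{-k}$ and $B_k = \sqrt{k+1}\,b$: explicitly set $\I = -\tfrac1k h$, $B = \sqrt{k+1}\,b$, and $\II = \I(B\cdot,\cdot)$. One checks the Gauss equation $K_i = \det B - 1 = (k+1) - 1 = k$ holds automatically (since $\det b = 1$ and $h$ is hyperbolic forces the intrinsic curvature of $\I$ to be $k$), and the Codazzi equation for $B$ with respect to the Levi-Civita connection of $\I$ follows from the Codazzi property of $b$ with respect to $\nabla^h$ (the connections of $h$ and $-\tfrac1k h$ coincide). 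By the fundamental theorem of surface theory in hyperbolic space, $(\I,\II)$ are realized by an immersion into $\Hyp^3$, equivariant for some representation $\rho$ of $\pi_1\Sigma$; the normal exponential flow off this surface on the convex side sweeps out a hyperbolic end $E$ whose $k$-surface is precisely $\Sigma$ (uniqueness of the $k$-surface in its own end, Theorem \ref{thm:existence_k_foliation}, pins this down). This produces a well-defined inverse map $\Teich_\Sigma^\hyp\times\Teich_\Sigma^\hyp \to \Ends(\Sigma)$, and smoothness of both directions comes from smooth dependence of solutions of the Gauss--Codazzi system and of the normal flow on parameters.

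The main obstacle, I expect, is not the algebra of the Gauss--Codazzi bookkeeping but two points of rigour: (i) verifying that the surface constructed from $(h,h^*)$ is \emph{embedded} and sits inside a genuine hyperbolic end (rather than an immersed or incomplete object), which requires convexity and completeness arguments — this is exactly where Labourie's original analysis in \cite{labourie1992surfaces} does the real work, and I would cite it rather than reprove it; and (ii) checking that the two constructions are mutually inverse, i.e. that starting from $E$, forming $(m_k,m_k^*)$, and running the reconstruction returns the same end — here the key input is again the \emph{uniqueness} clause of Theorem \ref{thm:existence_k_foliation} together with the uniqueness in Theorem \ref{thm:labourie_minimal_lagr}. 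Once bijectivity and smoothness in both directions are in hand, $\hat\Psi_k$ is a diffeomorphism, and composing with $\mathcal H^{-1}$ from Theorem \ref{thm:labourie_minimal_lagr1} — which also absorbs the harmless rescaling constant $-\tfrac{2\sqrt{k+1}}{k}$ in the fibers — yields that $\Phi_k$ is a diffeomorphism for every $k\in(-1,0)$.
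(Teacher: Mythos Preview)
The paper does not give its own proof of this theorem: it is stated as a citation of \cite[Th\'eor\`eme~3.1]{labourie1992surfaces} and used as a black box thereafter. So there is no ``paper's proof'' to compare against beyond the attribution.

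Your sketch is coherent and the factorization $\Phi_k = \mathcal{H}^{-1}\circ\hat\Psi_k$ (up to the fiberwise rescaling by $-\tfrac{2\sqrt{k+1}}{k}$, which you correctly flag as harmless) is exactly the relation the paper records in Section~\ref{subsec:psik}. The reconstruction step you outline --- from a normalized pair $(h,h^*)$ with Labourie operator $b$, set $\I=-\tfrac1k h$, $B=\sqrt{k+1}\,b$, verify Gauss--Codazzi, and invoke the fundamental theorem of surfaces --- is correct algebra, and the two uniqueness inputs you name (Theorem~\ref{thm:existence_k_foliation} for the $k$-leaf and Theorem~\ref{thm:labourie_minimal_lagr} for the operator $b$) are the right ones for the ``mutual inverse'' check.

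The one honest gap you already identify is the real content of the theorem: showing that the equivariant immersion produced by the fundamental theorem is embedded, complete on the convex side, and bounds a genuine hyperbolic end (rather than, say, a developing map with nontrivial self-intersections or an incomplete cone). You defer this to \cite{labourie1992surfaces}, which is appropriate, but note that this is precisely the step that makes the theorem nontrivial --- everything else in your outline is bookkeeping. In other words, your proposal reduces the statement back to Labourie's original analysis rather than giving an independent argument; that is a fair reading of the situation, and it matches what the paper itself does by simply citing the result.
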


In the following we will see how the maps $\Phi_k$ relate to the Schwarzian parametrization $\Sch$. Using the hyperbolic Gauss map (see e. g. \cite{labourie1991probleme}), we can think about the families $(\I_k)_k$, $(\II_k)_k$ and $(\III_k)_k$ as paths in the space of $(2,0)$-\hsk symmetric tensors over the surface $\partial_\infty E$, which does not depend on $k$. In this way we can study the asymptotic of these geometric quantities as $k$ goes to $0$.

In a recent work \cite{quinn2018asymptotically}, Quinn introduced the notion of \emph{asymptotically Poincaré families of surfaces} inside a hyperbolic end $E$, and he determined a connection between their geometric properties and the complex projective structure at infinity of $E$. The foliation by $k$-surfaces is an example of such families and the asymptotic of their fundamental forms is understood. In order to do not introduce more notions, we specialize the results of \cite{quinn2018asymptotically} in the form that we will need:

\begin{theorem}[{\cite{quinn2018asymptotically}}] \label{thm:quinn_asympt_ksurfaces}
	For every hyperbolic end $E \in \Ends(\Sigma)$ we have
	\[
	\lim_{k \to 0^-} h_k = \lim_{k \to 0^-} (-k) \II_k = h_0,
	\]
	where $h_0$ is the hyperbolic metric in the conformal class at infinity $c_0$. Moreover
	\[
	\bdot{h}_0 = - \frac{1}{2} h_0 - \Re q_0, \qquad \dv{k} \left. (-k)  \II_k \right|_{k = 0}  = 0 ,
	\]
	where $q_0$ is the Schwarzian at infinity of $E$.
\end{theorem}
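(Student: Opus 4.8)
The result is essentially due to Quinn~\cite{quinn2018asymptotically}, and my plan is to deduce it from his analysis of the asymptotic behaviour of surfaces escaping the conformal boundary of a hyperbolic end. The first step would be to put the end into a normal form near $\partial_\infty E$ adapted to its complex projective structure $\sigma_E$: using the developing map of $\sigma_E$ together with the hyperbolic metric $h_0$ in the conformal class at infinity $c_0$, one produces Fermi-type coordinates $(x,t)\in\partial_\infty E\times(0,\varepsilon)$ near $\partial_\infty E$ in which the hyperbolic metric of $E$ takes the Epstein (Fefferman--Graham) normal form
\[
g_E=\frac{\dd{t}^2+g_t}{t^2},\qquad g_t=h_0+t^2\,h^{(2)}+t^4\,h^{(4)},
\]
where $h^{(2)}$ is the ``second fundamental form at infinity'': its $h_0$-trace is pinned down by the Gauss curvature of $h_0$, hence is constant, while its $h_0$-traceless part is a universal multiple of $\Re q_0$, with $q_0$ the Schwarzian at infinity of $E$, and $h^{(4)}$ is determined algebraically by $h_0$ and $h^{(2)}$. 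This is the point at which the Schwarzian enters, and it is classical (cf.\ \cite{schlenker2017notes}).

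The second step would be to locate the leaf $\Sigma_k$ inside this collar. By Theorem~\ref{thm:existence_k_foliation} the surfaces $\Sigma_k$ exhaust the collar as $k\to 0^-$, so for $|k|$ small $\Sigma_k$ is a graph $\{t=r_k(x)\}$ with $\sup_x r_k\to 0$. Writing $\I_k$, $\II_k$, $\III_k$ and the shape operator $B_k$ of such a graph in terms of $r_k$ and the expansion of $g_t$, the requirement that $\Sigma_k$ be a $k$-surface (constant intrinsic curvature $k$, equivalently $\det B_k=k+1$) becomes a single scalar equation for $r_k$; its asymptotic analysis gives $r_k^2\sim -k$ at leading order and, carried one further order, the full expansion of $r_k$. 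Substituting back and transporting $\I_k$ and $\II_k$ to the fixed surface $\partial_\infty E$ via the hyperbolic Gauss map as explained before Theorem~\ref{thm:quinn_asympt_ksurfaces}, one reads off the constant terms $\lim_{k\to 0^-}h_k=\lim_{k\to 0^-}(-k)\II_k=h_0$ and the first-order terms $\bdot h_0=-\tfrac12 h_0-\Re q_0$ and $\dv{k}\left.(-k)\II_k\right|_{k=0}=0$.

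The part that carries the real weight, and which I would import from \cite{quinn2018asymptotically} rather than reprove, is the \emph{uniform} justification of this expansion: one must control the $k$-surfaces in $C^2$ against the model equidistant surfaces, with enough uniformity in the $k$-derivatives to survive the limit $k\to 0$, so that the formal computation above is the genuine first-order Taylor expansion at $k=0$. This is precisely what Quinn's notion of an \emph{asymptotically Poincaré family} is designed to handle; he shows that the $k$-surface foliation is one and computes its asymptotics, after which the statement is just a rewriting in the normalizations used here. A convenient way to fix the numerical constants while carrying out the plan is to run the whole computation first in the Fuchsian case $q_0=0$, where the equidistant foliation, the fundamental forms $\I_k,\II_k,\III_k$ and the hyperbolic Gauss map are all available in closed form.
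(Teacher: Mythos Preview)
The paper does not give its own proof of this statement: it is stated purely as a specialization of the results of \cite{quinn2018asymptotically}, with no argument beyond the citation. Your proposal therefore goes further than the paper itself does, by sketching the mechanism behind Quinn's result (the Fefferman--Graham/Epstein expansion near $\partial_\infty E$, writing $\Sigma_k$ as a graph $t=r_k(x)$, and extracting the constant and first-order terms in $k$). That outline is a reasonable summary of the content you are citing, and you correctly flag that the substantive analytic work---the uniform $C^2$ control of the $k$-surfaces needed to upgrade the formal expansion to an honest Taylor expansion at $k=0$---is what you import from \cite{quinn2018asymptotically} via the asymptotically Poincar\'e framework. Since the paper treats this theorem as a black box, there is nothing further to compare.
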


\begin{corollary} \label{cor:convergence_phik}
	The maps $(\Phi_k)_k$ converge to $\Sch$ $\mathscr{C}^1$-\hsk uniformly over compact subsets, as $k$ goes to $0$.
	\begin{proof}
		First we prove the pointwise convergence. Let $E$ be a hyperbolic end, and consider the path $(\Phi_k(E))_k$ in $T^* \Teich_\Sigma^\conf$. We define $g_k \defin (- k) \II_k$. Then, the relations of Theorem \ref{thm:quinn_asympt_ksurfaces} can be rewritten as follows:
		\[
		g_0 \defin \lim_{k \to 0} g_k = h_0 , \qquad \bdot{h}_0 = - \frac{1}{2} h_0 - \Re q_0 , \qquad \bdot{g}_0 = 0 .
		\]
		The first relation proves that the conformal classes $c_k$ converge to the conformal structure of $\partial_\infty E$. We need to show that the holomorphic quadratic differentials $q_k$ converge to the Schwarzian differential $q_0$. This is a simple application of the relations above, we briefly summarize the steps in the following. First we observe that
		\begin{align*}
		\lim_{k \to 0} 2 \Re q_k & = \lim_{k \to 0} - \frac{2 \sqrt{k + 1}}{k} \left( h_k - \frac{\tr_{g_k}(h_k)}{2} g_k \right) \\
		& = \lim_{k \to 0} - 2 \sqrt{k + 1} \ \frac{h_k - h_0}{k} + \sqrt{k + 1} \ \frac{\tr_{g_k}(h_k) \ g_k - 2 h_0}{k} \\
		& = - 2 \bdot{h}_0 + \dv{k} \left. \tr_{g_k}(h_k) \ g_k \right|_{k = 0} ,
		\end{align*}
		where, in the last step, we are using that $\lim_{k \to 0} \tr_{g_k}(h_k) \ g_k = 2 h_0$. A simple computation shows that $\dv{k} \left. \tr_{g_k}(h_k) \right|_{k = 0} = - 1$. Combining this with the relation above we obtain
		\[
		\lim_{k \to 0} 2 \Re q_k = - 2 \left( - \frac{1}{2} h_0 - \Re q_0 \right) - h_0 + 2 \bdot{g}_0 = 2 \Re q_0 ,
		\]
		which was our claim.
		
		In \cite{quinn2018asymptotically}, the author gave an alternative proof of the existence of the $k$-\hsk surface foliation, for $k$ close to $0$. The strategy of his proof is to apply the Banach implicit function theorem to a function
		\[
		F \vcentcolon (-1,0] \times \Conf^s(\Sigma,c) \longrightarrow \Conf^s(\Sigma,c) ,
		\]
		which satisfies $F(k,\tau) = 0$ if and only if $\tau$ is (a proper multiple of) the metric at infinity associated to the $k$-\hsk surface. Here $\Conf^s(\Sigma,c)$ denotes the space of Sobolev metrics in the conformal class $c$ (see \cite[Theorem~5.1]{quinn2018asymptotically} for details). The map $F$ depends smoothly on $k$ and also on the complex projective structure at infinity $(c,q)$. In particular, the implicit function theorem guarantees the smooth regularity of the metric at infinity $\tau_k$, associated to the $k$-surface $\Sigma_k$, with respect to $k \in (-1,0]$ and $(c,q) \in T^* \Teich_\Sigma^\conf$. Since the tensors $\I_k$ and $\II_k$ are smooth functions of $\tau_k$ and $(c,q)$, the function $\Phi(k;c,q) \defin \Phi_k \circ \Sch^{-1} (c,q)$ is smooth in all its arguments. This properties imply the higher order convergence.
	\end{proof}
\end{corollary}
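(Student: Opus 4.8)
The plan is to prove the statement componentwise on $T^* \Teich_\Sigma^\conf$ — first the base point $c_k$, then the fiber coordinate $q_k$ — to obtain pointwise convergence $\Phi_k(E) \to \Sch(E)$, and then to promote this to $\mathscr{C}^1$-convergence uniform on compacta by invoking the smooth dependence of the whole $k$-\hsk surface construction on the initial data.

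The convergence of the conformal classes is immediate: by Theorem~\ref{thm:quinn_asympt_ksurfaces} the rescaled metrics $(-k)\II_k$ converge to the hyperbolic representative $h_0$ of the conformal structure at infinity $c_0$, and since $c_k$ is by definition $[\II_k] = [(-k)\II_k]$, continuity of the projection from the space of metrics to $\Teich_\Sigma^\conf$ gives $c_k \to c_0$. For the holomorphic quadratic differentials I would first express $q_k$ through a traceless part: setting $g_k \defin (-k)\II_k$, a representative of $c_k$, Remark~\ref{rmk:rels_hopf_traceless_part} identifies the $g_k$-\hsk traceless part of $h_k$ with $2\Re \Hopf(\id \colon (\Sigma, c_k) \to (\Sigma, h_k))$, so that (equivalently to \eqref{eq:hopf_diff_I_and_III})
\[
2\Re q_k = -\frac{2\sqrt{k+1}}{k}\left(h_k - \frac{\tr_{g_k}(h_k)}{2}\,g_k\right) .
\]
Since $h_k$ and $g_k$ both converge to $h_0$ and $\tr_{h_0}(h_0) = 2$, the parenthesis tends to $0$, so this expression is of indeterminate form $0/0$ in $k$: resolving it is the one genuinely computational point of the proof, and it is exactly what Quinn's first-order asymptotics are designed for. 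Splitting
\[
2\Re q_k = -2\sqrt{k+1}\,\frac{h_k - h_0}{k} + \sqrt{k+1}\,\frac{\tr_{g_k}(h_k)\,g_k - 2h_0}{k} ,
\]
the two difference quotients converge to $\bdot{h}_0$ and to $\dv{k}\left.(\tr_{g_k}(h_k)\,g_k)\right|_{k=0}$. Using $\bdot{g}_0 = 0$ and $\bdot{h}_0 = - \frac{1}{2} h_0 - \Re q_0$ from Theorem~\ref{thm:quinn_asympt_ksurfaces}, one computes $\dv{k}\left.\tr_{g_k}(h_k)\right|_{k=0} = \tr_{h_0}(\bdot{h}_0) = -1$ (the $\Re q_0$ contribution being trace-free), hence $\dv{k}\left.(\tr_{g_k}(h_k)\,g_k)\right|_{k=0} = -h_0$, and therefore $\lim_{k\to 0} 2\Re q_k = -2\bdot{h}_0 - h_0 = 2\Re q_0$. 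As a holomorphic quadratic differential is determined by the real part of its associated symmetric tensor, this gives $q_k \to q_0$, i.e. $\Phi_k(E) \to \Sch(E)$ for every fixed end.

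To upgrade this to $\mathscr{C}^1$-\hsk uniform convergence on compact subsets of $\Ends(\Sigma)$, it suffices to show that the family $(k; c, q) \mapsto \Phi_k(\Sch^{-1}(c,q))$ extends to a jointly smooth map on a neighbourhood of $\set{0} \times T^* \Teich_\Sigma^\conf$; joint smoothness then forces $\mathscr{C}^1$ (indeed $\mathscr{C}^\infty$) convergence uniformly on compacta as $k \to 0$. For $k$ near $0$ this is where I would appeal to the implicit-\hsk function-\hsk theorem description of the $k$-\hsk surface foliation from \cite{quinn2018asymptotically}: there is a map $F \colon (-1,0] \times \Conf^s(\Sigma,c) \to \Conf^s(\Sigma,c)$, depending smoothly on $k$ and on the complex projective structure $(c,q)$ at infinity, whose zero set encodes the metric at infinity $\tau_k$ associated with $\Sigma_k$; the implicit function theorem provides $\tau_k$ as a smooth function of $(k,c,q)$ up to and including $k = 0$. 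Since $\I_k$ and $\II_k$ — and hence $c_k$ and $q_k$ — are smooth functions of $\tau_k$ and of $(c,q)$, the map $\Phi_k \circ \Sch^{-1}$ is smooth in all its arguments, completing the argument. I expect the main obstacle to lie in the $0/0$ limit above; the $\mathscr{C}^1$ refinement is comparatively formal, resting entirely on the smooth-\hsk dependence statement imported from \cite{quinn2018asymptotically}.
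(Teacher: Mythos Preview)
Your proposal is correct and follows essentially the same route as the paper's own proof: the same rescaling $g_k = (-k)\II_k$, the same splitting of $2\Re q_k$ into two difference quotients resolved via Quinn's asymptotics, and the same appeal to the implicit-function-theorem construction in \cite{quinn2018asymptotically} for the $\mathscr{C}^1$ upgrade. If anything, you supply a touch more detail (e.g.\ justifying $\dv{k}\big|_{k=0}\tr_{g_k}(h_k) = -1$ via the tracelessness of $\Re q_0$).
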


\subsection{The parametrizations \texorpdfstring{$\Psi_k$}{Psik}} \label{subsec:psik}

The diffeomorphism $\mathcal{H}$ from Theorem \ref{thm:labourie_minimal_lagr1} allows us to convert the family of parametrizations $(\Phi_k)_k$, which take values in $T^* \Teich_\Sigma^\conf$, into a family of parametrizations $(\hat{\Psi}_k)_k$ with values in $\Teich_\Sigma^\hyp \times \Teich_\Sigma^\hyp$. Indeed, the functions
\[
\begin{matrix}
\hat{\Psi}_k \defin \mathcal{H} \circ \Phi_k \vcentcolon & \Ends(\Sigma) & \longrightarrow & \Teich^\hyp_\Sigma \times \Teich^\hyp_\Sigma \\
& [E] & \longmapsto & (h_k, h_k^*) ,
\end{matrix}
\]
associate to each hyperbolic end $E$, the pair of hyperbolic metrics $h_k = (- k) \I_k$ and $h_k^* = - \frac{k}{k + 1} \III_k$ coming from the first and third fundamental forms of the $k$-\hsk surface $\Sigma_k$ of $E$, as we described in Section \ref{subsec:constant_extr_curvature}.

The maps $\hat{\Psi}_k$ have been the main object of study of Bonsante, Mondello and Schlenker in \cite{bonsante2013a_cyclic}, \cite{bonsante2015a_cyclic}. In these works, the authors introduced the notions of \emph{landslide flow} and of \emph{smooth grafting} $\SGr_s'$, and studied their convergence to the classical earthquake flow and grafting map $\Gr$. Our functions $\hat{\Psi}_k$ are actually the inverses of the maps $\SGr_s'$ (the relation between $k$ and $s$ is $k = - \frac{1}{\cosh^2(s/2)}$). 

As the Schwarzian parametrization can be recovered from the limit of the maps $\Phi_k$ when $k \to 0$, the Thurston parametrization can be recovered from the limit of the maps $\hat{\Psi}_k$ when $k \to -1$. Indeed, we have:

\begin{theorem} \label{thm:convergence_psi_k_hat}
	The maps $\hat{\Psi}_k$ converge to $\Th$, as $k$ goes to $-1$, in the following sense: if $E$ is a hyperbolic end, then the length spectrum of $\III_k$ converges to $\iota(\cdot,\mu)$, where $\iota(\cdot,\cdot)$ denotes the geometric intersection of currents. Moreover, the first fundamental forms $\I_k$ converge to the hyperbolic metric of the locally concave pleated boundary $\partial E$.
	\begin{proof}
		Let $E$ be a fixed hyperbolic end. The convergence of the first fundamental forms $\I_k$ is a direct consequence of Theorem \ref{thm:existence_k_foliation}.
		
		By the correspondence between hyperbolic ends and maximal global hyperbolic spatially compact (MGHC) de Sitter spacetimes (see e. g. \cite{mess2007lorentz} and the duality described in Section \ref{subsec:constant_extr_curvature}), the foliation by $k$-\hsk surfaces of $E$ determines a constant curvature surfaces foliation of the MGHC de Sitter spacetime $E^*$ dual of $E$. Through this correspondence, the third fundamental form $\III_k$ of the leaf $\Sigma_k$ in $E$ can be interpreted as the first fundamental form of its dual surface $\Sigma_k^*$ in $E^*$, which has constant intrinsic curvature equal to $\frac{k}{k + 1}$. Moreover, the initial singularity of $E^*$ is dual of the bending measured lamination $\mu$ of the pleated boundary $\partial E$, as shown by \citet[Chapter~3]{benedetti2009canonical_wick}. 
		
		In \cite{belraouti2017asymptotic}, the author studied the intrinsic metrics of families of surfaces which foliate a neighborhood of the initial singularity in $E^*$. In particular, \citet[Theorem~2.10]{belraouti2017asymptotic} proved that, for a wide class of such foliations, the intrinsic metrics of the surfaces converge, with respect to the Gromov equivariant topology, to the real tree dual of the measured lamination $\mu$, as the surfaces approach the initial singularity of $E^*$. By applying this result to the constant curvature foliation of $E^*$, and interpreting $\III_k$ as the first fundamental forms of its leaves, we deduce the convergence of the length spectrum of $\III_k$ to $\iota(\cdot,\mu)$. 
	\end{proof}
\end{theorem}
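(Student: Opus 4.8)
The plan is to establish the two assertions separately. The convergence of the first fundamental forms $\I_k$ to the hyperbolic metric of the pleated boundary $\partial E$ is essentially a restatement of Theorem~\ref{thm:existence_k_foliation}: the leaves $\Sigma_k$ exhaust a collar of $\partial E$ inside $\overline{E}$ as $k \to -1$, and since $\partial E$ is precisely the frontier added in the metric completion, the induced path metrics $\I_k$ converge to the path metric of $\partial E$, which by the definition of a hyperbolic end is a hyperbolic metric $h$ on $\Sigma$; this $h$ is by construction the first coordinate of $\Th(E)$.

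The substantive part is the convergence of the length spectrum of $\III_k$. Here I would pass to the Lorentzian side using the polar duality recalled in Section~\ref{subsec:constant_extr_curvature}: the hyperbolic end $E$ determines a MGHC de Sitter spacetime $E^*$, and the $k$-surface foliation $(\Sigma_k)_k$ of $E$ dualizes to a foliation $(\Sigma_k^*)_k$ of $E^*$ by space-like surfaces of constant intrinsic curvature $\frac{k}{k+1}$, with the third fundamental form $\III_k$ of $\Sigma_k$ becoming the first fundamental form $\I_k^*$ of $\Sigma_k^*$. As $k \to -1$ the intrinsic curvature $\frac{k}{k+1} \to -\infty$, and correspondingly the leaves $\Sigma_k^*$ converge to the initial singularity of $E^*$; by the work of Benedetti and Bonsante \cite{benedetti2009canonical_wick}, this initial singularity is the real tree dual to the bending measured lamination $\mu$ of $\partial E$.

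I would then invoke Belraouti's theorem \cite[Theorem~2.10]{belraouti2017asymptotic}, which asserts that for a sufficiently regular foliation of a neighborhood of the initial singularity of a MGHC de Sitter spacetime, the induced metrics of the leaves converge, in the Gromov equivariant topology, to the $\R$-tree dual of the associated measured lamination. Granting this, Gromov equivariant convergence of the metrics $\I_k^* = \III_k$ to the dual tree of $\mu$ unwinds to the statement that, for every free homotopy class $\gamma$, the length $\ell_{\III_k}(\gamma)$ converges to the translation length of $\gamma$ on the dual $\R$-tree of $\mu$, which is exactly $\iota(\gamma,\mu)$; phrasing this over all currents yields the convergence of the length spectrum of $\III_k$ to $\iota(\cdot,\mu)$.

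The main obstacle I anticipate is verifying that the constant curvature foliation $(\Sigma_k^*)_k$ belongs to the class of foliations to which Belraouti's theorem applies — roughly, that it is a monotone, proper exhaustion of a neighborhood of the initial singularity whose leaves genuinely escape to the singularity — which requires some uniform control on the geometry of $\Sigma_k^*$ as $k \to -1$. Once this admissibility is checked, the remaining argument is bookkeeping with the duality dictionary of Section~\ref{subsec:constant_extr_curvature} and the standard identification of translation lengths on $\R$-trees dual to measured laminations with geometric intersection numbers.
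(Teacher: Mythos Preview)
Your proposal is correct and follows essentially the same route as the paper: reduce the $\I_k$ convergence to Theorem~\ref{thm:existence_k_foliation}, then dualize to the MGHC de Sitter spacetime $E^*$ so that $\III_k$ becomes the induced metric on the dual constant-curvature leaf, identify the initial singularity with the $\R$-tree dual to $\mu$ via Benedetti--Bonsante, and apply Belraouti's convergence theorem. The paper does not spell out the admissibility check you flag, so your awareness of that point is, if anything, a refinement rather than a divergence.
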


\subsubsection*{Hyperbolic length functions}

Following \cite{bonsante2015a_cyclic}, we define
\[
\begin{matrix}
j \vcentcolon & \Teich^\hyp_\Sigma \times \Teich_\Sigma^\hyp & \longrightarrow & \R \\
& (h,h^*) & \longmapsto & \int_\Sigma \tr b \dd{a}_h ,
\end{matrix}
\]
which associates, to a normalized pair of hyperbolic metrics $h$, $h'$ with Labourie operator $\mappa{b}{T \Sigma}{T \Sigma}$ (see Definition \ref{def:labourie_operator}), the integral of the trace of $b$ with respect to the area measure of $h$ (here we are identifying, with abuse, the hyperbolic metrics $h$ and $h'$ with their isotopy classes). The quantity $j(h,h')$ satisfies
\[
j(h,h') = 2 \ E(\mappa{\id}{(\Sigma,c)}{(\Sigma,h)}) = 2 \ E(\mappa{\id}{(\Sigma,c)}{(\Sigma,h')}) ,
\]
where $c$ is the conformal class of $h(b \cdot, \cdot)$, and $E(\cdot)$ denotes the energy functional (see \cite[Section~1.2]{bonsante2015a_cyclic}). This shows in particular that $j$ is symmetric, i. e. $j(h,h') = j(h',h)$. 

For any hyperbolic metric $h'$, we define $\mappa{L_{h'}}{\Teich_\Sigma^\hyp}{\R}$ to be $L_{h'}(h) \defin j(h,h')$. The functions $L_{h'}$, which are real analytic by \cite[Proposition~1.2]{bonsante2015a_cyclic}, can be interpreted as generalizations of length functions, in light of the following fact:

\begin{proposition} \label{prop:conergence_length_functions}
	Let $(h_n)_n$, $(h_n^*)_n$ be two sequences of hyperbolic metrics. Suppose that $(h_n)_n$ converges to $h \in \Teich_\Sigma^\hyp$, and that there exists a sequence of positive numbers $(\vartheta_n)_n$ such that the length spectrum of $\varepsilon_n^2 \ h_n^*$ converges to $\iota(\cdot, \mu)$, for some measured lamination $\mu \in \MesLam_\Sigma$. Then
	\[
	\lim_{n \to \infty} \varepsilon_n \ L_{h_n^*} (h_n)  = L_\mu(h) .
	\]
	\begin{proof}
		Using the interpretation via $k$-\hsk surfaces, we can easily prove this statement, which is purely $2$-\hsk dimensional, using $3$-\hsk dimensional hyperbolic geometry.
		
		First we observe that, since the injectivity radius of $h_n^*$ is going to $0$, the sequence $\varepsilon_n$ must converge to $0$. In particular, the limit of $k_n \defin - (\cosh^2 \varepsilon_n)^{-1}$ is equal to $-1$, as $n$ goes to infinity. In \cite[Proposition~6.2]{bonsante2013a_cyclic}, the authors proved that, under our hypotheses, the sequence of hyperbolic ends $(E_n)_n$ given by $E_n \defin \hat{\Psi}_{k_n}^{-1}(h_n,h_n^*)$ (which, in the notation of \cite{bonsante2013a_cyclic}, coincides with $\SGr_{2 \varepsilon_n}'(h_n, h_n^*)$), converges to $E \defin \Gr_\mu(h)$. Recalling the definitions of $h_n$, $h_n^*$, we see that
		\[
		L_{h_n^*}(h_n) = - \frac{k_n}{\sqrt{k_n + 1}} \int_{\Sigma_{k_n}} H_{k_n} \dd{a}_{\I_{k_n}} ,
		\]
		where $\Sigma_{k_n}$ is the $k_n$-\hsk surface inside $E_n$, and $\I_{k_n}$ and $H_{k_n}$ are its first fundamental form and mean curvature, respectively. 
		
		Since $E_n$ goes to $E = \Gr_{\mu}(h)$, and $k_n$ goes to $-1$, the intrinsic metrics of the surfaces $\Sigma_{k_n}$ converge to the hyperbolic metric $h$ of the pleated boundary $\partial E$, and the bending measures of $\partial E_n$ converge to $\mu$. In particular, the integral of the mean curvature of $\Sigma_{k_n}$ converges to $ L_{\mu}(h)$, the length of the bending measure of $\partial E$ (see for instance \cite[Section~2]{mazzoli2018the_dual}). From the relation between $k_n$ and $\varepsilon_n$, we see that
		\[
		\lim_{n \to \infty} \varepsilon_n \left( - \frac{k_n}{\sqrt{k_n + 1}} \right) = 1 .
		\]
		The combination of these two fact implies the statement.
	\end{proof}
\end{proposition}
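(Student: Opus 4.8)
The plan is to reduce this purely two-\hsk dimensional statement about hyperbolic length functions to a three-\hsk dimensional one about the $k$-\hsk surface foliation of a hyperbolic end, and then to feed in the known behaviour of $k$-\hsk surfaces as $k \to -1$ together with the smooth grafting convergence theorem of Bonsante--Mondello--Schlenker. Concretely, I would set $k_n \defin -(\cosh^2 \varepsilon_n)^{-1}$ and, using that $\hat{\Psi}_{k_n} = \mathcal{H} \circ \Phi_{k_n}$ is a diffeomorphism (Theorems \ref{thm:labourie_minimal_lagr1} and \ref{thm:labourie_parametrization}), define the hyperbolic end $E_n \defin \hat{\Psi}_{k_n}^{-1}(h_n, h_n^*)$. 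By construction its $k_n$-\hsk surface $\Sigma_{k_n} \subset E_n$ satisfies $\I_{k_n} = -\tfrac{1}{k_n}\, h_n$ and $\III_{k_n} = -\tfrac{k_n+1}{k_n}\, h_n^*$, the Labourie operator of the pair $(h_n,h_n^*)$ is $b_n = \tfrac{1}{\sqrt{k_n+1}}\, B_{k_n}$ (so $\tr b_n = H_{k_n}/\sqrt{k_n+1}$, with $H_{k_n}$ the mean curvature), and in dimension two $\dd{a}_{h_n} = (-k_n)\,\dd{a}_{\I_{k_n}}$. These identities combine into
\[
L_{h_n^*}(h_n) \;=\; \int_\Sigma \tr(b_n)\, \dd{a}_{h_n} \;=\; -\frac{k_n}{\sqrt{k_n+1}}\int_{\Sigma_{k_n}} H_{k_n}\, \dd{a}_{\I_{k_n}} ,
\]
which expresses the quantity of interest in terms of the total mean curvature of a leaf of the foliation of $E_n$.

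To take the limit I would first note that, since $\iota(\cdot,\mu)$ is the intersection function of a measured lamination and hence is not proportional to the length spectrum of any hyperbolic metric, the metrics $h_n^*$ must leave every compact subset of $\Teich_\Sigma^\hyp$; the rescalings $\varepsilon_n$ that make $\varepsilon_n^2\, h_n^*$ converge in the sense of length spectra then necessarily tend to $0$, so $k_n \to -1$. At this point the key external input is \cite[Proposition~6.2]{bonsante2013a_cyclic}, which asserts that, under exactly the present hypotheses, the ends $E_n = \SGr_{2\varepsilon_n}'(h_n, h_n^*)$ converge to $E \defin \Gr_\mu(h)$. Combining $E_n \to E$ with $k_n \to -1$ and Theorem \ref{thm:existence_k_foliation}, the intrinsic metrics $\I_{k_n}$ converge to the hyperbolic metric of the pleated boundary $\partial E$, which is $h$, while the bending measures of $\partial E_n$ converge to $\mu$; the fact that the total mean curvature of a family of convex surfaces exhausting the convex core of a hyperbolic end converges to the hyperbolic length of its bending lamination (see \cite[Section~2]{mazzoli2018the_dual}) then gives $\int_{\Sigma_{k_n}} H_{k_n}\, \dd{a}_{\I_{k_n}} \to L_\mu(h)$. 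Finally, from $k_n + 1 = \tanh^2 \varepsilon_n$ one computes $\varepsilon_n\bigl(-\tfrac{k_n}{\sqrt{k_n+1}}\bigr) = \tfrac{2\varepsilon_n}{\sinh 2\varepsilon_n} \to 1$, so multiplying through by $\varepsilon_n$ in the displayed identity and combining the two limits finishes the proof.

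The step I expect to be the crux --- where the actual content sits, as opposed to the routine reduction above --- is the convergence of the total mean curvatures to $L_\mu(h)$: as $k_n \to -1$ the second fundamental forms $\II_{k_n}$ degenerate, concentrating their mass along the lamination $\mu$, so one needs the collapse of the surfaces $\Sigma_{k_n}$ onto $\partial E$ and the convergence of the bending measures to be compatible in a quantitative, not merely set-\hsk theoretic, way; this is precisely where the convexity of hyperbolic ends and the precise form of \cite[Proposition~6.2]{bonsante2013a_cyclic} do the work. A secondary, purely bookkeeping point is keeping the normalizations $h_n = -k_n\,\I_{k_n}$ and $h_n^* = -\tfrac{k_n}{k_n+1}\,\III_{k_n}$ and the relation between $k_n$ and $\varepsilon_n$ consistent throughout, which amounts only to the elementary computation with hyperbolic functions carried out above.
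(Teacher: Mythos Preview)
Your proposal is correct and follows essentially the same approach as the paper: reduce to the $k$-surface interpretation via $E_n = \hat{\Psi}_{k_n}^{-1}(h_n,h_n^*)$ with $k_n = -(\cosh^2\varepsilon_n)^{-1}$, express $L_{h_n^*}(h_n)$ as $-\tfrac{k_n}{\sqrt{k_n+1}}\int H_{k_n}\,\dd a$, invoke \cite[Proposition~6.2]{bonsante2013a_cyclic} for $E_n\to\Gr_\mu(h)$ and \cite[Section~2]{mazzoli2018the_dual} for the convergence of the total mean curvature to $L_\mu(h)$, and finish with the elementary limit $\varepsilon_n\bigl(-\tfrac{k_n}{\sqrt{k_n+1}}\bigr)\to 1$. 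Your write-up is in fact slightly more explicit about the intermediate identities (the Labourie operator, the area form rescaling, and the hyperbolic-function computation) than the paper's own proof.
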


As done in \cite{bonsante2015a_cyclic}, instead of working directly with $\hat{\Psi}_k$, we we will introduce a family of maps $(\Psi_k)_k$ that have the advantage of taking values in the cotangent space $T^* \Teich_\Sigma^\hyp$. This will be more convenient for the rest of our paper, since we investigate the properties of these parametrizations with respect to the cotangent symplectic structure of $T^* \Teich_\Sigma^\hyp$ and $T^* \Teich_\Sigma^\conf$. The functions $\Psi_k$ are defined as follows:
\[
\begin{matrix}
\Psi_k \vcentcolon & \Ends(\Sigma) & \longrightarrow & T^* \Teich_\Sigma^\hyp \\
& [E] & \longmapsto & (h_k, - \frac{\sqrt{k + 1}}{k} \dd{(L_{h^*_k})}_{h_k}) ,
\end{matrix} 
\]
where $\dd{(L_{h^*_k})}_{h_k}$ denotes the differential of the function $L_{h_k^*}$, defined as before, at the point $h_k$. We also consider the function
\[
\begin{matrix}
\dd{L} \vcentcolon & \Teich^\hyp_\Sigma \times \MesLam_\Sigma & \longrightarrow & T^* \Teich_\Sigma^\hyp \\
& (h,\mu) & \longmapsto & (h, \dd(L_\mu)_h) .
\end{matrix}
\]

\begin{proposition} \label{prop:limit_psik}
	The functions
	\[
	\dd{L} \circ \Th \vcentcolon \Ends(\Sigma) \longrightarrow T^* \Teich_\Sigma^\hyp \quad \text{and} \quad \Psi_k \vcentcolon \Ends(\Sigma) \rightarrow T^* \Teich_\Sigma^\hyp
	\]
	are $\mathscr{C}^1$ diffeomorphisms, for every $k \in (-1,0)$. Moreover, the functions $\Psi_k$ converge pointwisely to $\dd{L} \circ \Th$ as $k$ goes to $-1$.
	\begin{proof}
		A proof of the $\mathscr{C}^1$-\hsk regularity of $\dd{L} \circ \Th$ can be found in \cite[Lemma~1.1]{krasnov2009symplectic}. The smoothness of the maps $\hat{\Psi}_k$ follows from the original work of \citet{labourie1992surfaces}. Up to scalar multiplication in the fiber, the functions $\Psi_k$ are equal to the composition of the $\hat{\Psi}_k$'s with the map
		\[
		\begin{matrix}
		\Teich_\Sigma^\hyp \times \Teich_\Sigma^\hyp & \longrightarrow & T^* \Teich_\Sigma^\hyp \\
		(h,h') & \longrightarrow & (h, \dd{(L_{h'})}_h) .
		\end{matrix}
		\]
		This function has been proved to be a diffeomorphism in \cite[Proposition~1.10]{bonsante2015a_cyclic}. This shows that $\Psi_k$ is a diffeomorphism for every $k \in (-1,0)$.
		The pointwise convergence of the functions $\Psi_k$ follows from Theorem \ref{thm:convergence_psi_k_hat}, Proposition \ref{prop:conergence_length_functions} and the analyticity of the functions $L_{h'}$, established in \cite[Proposition~1.2]{bonsante2015a_cyclic}.
	\end{proof}
\end{proposition}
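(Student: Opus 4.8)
The plan is to treat the two assertions separately: the diffeomorphism property I would obtain by factoring each map through diffeomorphisms already at our disposal, and the convergence $\Psi_k \to \dd L \circ \Th$ I would reduce to the convergence of length functions in Proposition \ref{prop:conergence_length_functions}. For $\dd L \circ \Th$, note that it is the composition of the Thurston parametrization $\Th$ with $\dd L\colon (h,\mu)\mapsto (h,\dd{(L_\mu)}_h)$, and I would simply quote \cite[Lemma~1.1]{krasnov2009symplectic} for the $\mathscr{C}^1$-regularity of this composition and of its inverse. For $\Psi_k$ I would use $\hat{\Psi}_k = \mathcal{H}\circ\Phi_k$: since $\Phi_k$ is a diffeomorphism by Theorem \ref{thm:labourie_parametrization} and $\mathcal{H}$ is a diffeomorphism by Theorem \ref{thm:labourie_minimal_lagr1}, the map $\hat{\Psi}_k\colon \Ends(\Sigma)\to \Teich_\Sigma^\hyp\times\Teich_\Sigma^\hyp$ is a diffeomorphism; post-composing with $(h,h')\mapsto (h,\dd{(L_{h'})}_h)$, which is a diffeomorphism by \cite[Proposition~1.10]{bonsante2015a_cyclic}, and then with the fibrewise rescaling of $T^*\Teich_\Sigma^\hyp$ by the nonzero constant $-\frac{\sqrt{k+1}}{k}$, produces exactly $\Psi_k$, which is therefore a $\mathscr{C}^1$ diffeomorphism for each $k\in(-1,0)$.

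For the convergence I would fix a hyperbolic end $E$, set $(h,\mu)\defin\Th(E)$ — so $\mu$ is the bending lamination of $\partial E$ — and check the two components of $\Psi_k(E)$ against $(h,\dd{(L_\mu)}_h)$ separately. The base point $h_k=(-k)\I_k$ tends to $h$ because the first fundamental forms $\I_k$ converge to the hyperbolic metric of the pleated boundary $\partial E$ by Theorem \ref{thm:convergence_psi_k_hat} while $-k\to1$ as $k\to-1$. For the cotangent component I would introduce $\varepsilon_k\defin\sqrt{-\frac{k+1}{k}}$, which is positive on $(-1,0)$ and tends to $0$ as $k\to-1$, and record
\[
-\frac{\sqrt{k+1}}{k}=\frac{\varepsilon_k}{\sqrt{-k}},\qquad \varepsilon_k^2\,h_k^*=\III_k,
\]
so that $-\frac{\sqrt{k+1}}{k}\dd{(L_{h_k^*})}_{h_k}=\frac{1}{\sqrt{-k}}\,\dd{(\varepsilon_k L_{h_k^*})}_{h_k}$ with $\frac{1}{\sqrt{-k}}\to1$. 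By Theorem \ref{thm:convergence_psi_k_hat} the length spectrum of $\III_k=\varepsilon_k^2\,h_k^*$ converges to $\iota(\cdot,\mu)$, so Proposition \ref{prop:conergence_length_functions}, applied with a fixed metric in place of the first sequence at each point of $\Teich_\Sigma^\hyp$, yields the pointwise convergence $\varepsilon_k L_{h_k^*}\to L_\mu$ on $\Teich_\Sigma^\hyp$.

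The last step — passing from this pointwise convergence of \emph{functions} to convergence of their \emph{differentials} at the moving base points $h_k\to h$ — is where I expect the real work to lie, and it is the main obstacle. Pointwise convergence of the values does not by itself control the differentials, and the difficulty is aggravated by the fact that the hyperbolic metrics $h_k^*$ defining $L_{h_k^*}$ degenerate toward the Thurston boundary (their length spectra blow up as $k\to-1$), so one cannot take the limit inside the smooth map $(h,h')\mapsto (h,\dd{(L_{h'})}_h)$. To close the gap I would invoke the real-analyticity of the functions $L_{h'}$ from \cite[Proposition~1.2]{bonsante2015a_cyclic}, together with locally uniform bounds on their second derivatives (equivalently, a convexity property of these generalized length functions), to upgrade $\varepsilon_k L_{h_k^*}\to L_\mu$ to convergence in $\mathscr{C}^1$ on compact subsets of $\Teich_\Sigma^\hyp$. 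Since $h_k\to h$, this gives $\dd{(\varepsilon_k L_{h_k^*})}_{h_k}\to\dd{(L_\mu)}_h$, and multiplying by $\frac{1}{\sqrt{-k}}\to1$ completes the proof that $\Psi_k(E)\to(\dd L\circ\Th)(E)$.
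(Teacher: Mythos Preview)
Your proposal is correct and follows essentially the same route as the paper: the diffeomorphism assertions are obtained via the same factorisations and citations (\cite[Lemma~1.1]{krasnov2009symplectic} for $\dd L\circ\Th$, Labourie's work for $\hat{\Psi}_k$, and \cite[Proposition~1.10]{bonsante2015a_cyclic} for the map $(h,h')\mapsto(h,\dd{(L_{h'})}_h)$), and the pointwise convergence is reduced to Theorem~\ref{thm:convergence_psi_k_hat}, Proposition~\ref{prop:conergence_length_functions}, and the analyticity of the $L_{h'}$ from \cite[Proposition~1.2]{bonsante2015a_cyclic}. The paper is terser precisely at the step you flag as the main obstacle---upgrading the convergence $\varepsilon_k L_{h_k^*}\to L_\mu$ to convergence of the differentials at the moving base points---and simply invokes analyticity without spelling out the additional uniform control; your explicit discussion of this point is therefore, if anything, more thorough than the paper's own argument.
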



\section{Volumes and Schl{\"a}fli formulas}

In this section we define two families of volume functions for convex co-\hsk compact hyperbolic $3$-\hsk manifolds: the $W_k$-\hsk volumes, related to the notion of \emph{$W$-\hsk volume} introduced in \cite{krasnov2008renormalized}, and the $\Vol_k^*$-\hsk volumes, related the notion of \emph{dual volume} introduced in \cite{krasnov2009symplectic}. For both these families we will prove a Schl\"afli-\hsk type variation formula, involving the extremal length, in the case of $W_k$, and the hyperbolic length functions $L_{h^*}$ introduced in the previous section, in the case of $\Vol_k^*$. We also describe a simple way to compute the \emph{renormalized volume} $\Vol_R$ of a convex co-\hsk compact hyperbolic manifold using the volumes $W_k$.

\subsection{\texorpdfstring{$W_k$}{Wk}-\hsk volumes}

Let $M$ be a Kleinian manifold and let $\mathscr{G}(M)$ denote the space of convex co-\hsk compact hyperbolic structures of $M$. We define
\[
W_k(M) \defin W(M_k) = \Vol(M_k) - \frac{1}{4} \int_{\partial M_k} H_k \dd{a}_{\I_k} ,
\]
where $M_k$ denotes the compact region of $M$ bounded by the union of the $k$-surfaces sitting inside the ends of $M$. The quantities $\I_k$, $\II_k$, $\III_k$, $c_k$ and $q_k$ of $\partial M_k$ are defined using the conventions of the previous section.

\begin{lemma} \label{lem:differential_Wk_volume}
The function $\mappa{W_k}{\mathscr{G}(M)}{\R}$ satisfies
\[
\dd(W_k)_M(\delta M) = - \Re \scal{q_k}{\delta c_k} .
\]
\begin{proof}
	We apply the variation formula of the $W$-\hsk volume, proved in Proposition \ref{prop:variation_W_volume}. Since the boundary of $M_k$ is a $k$-\hsk surface for every convex co-\hsk compact structure $M$, the term involving $\delta K_e$ vanishes.
	Therefore we have:
	\begin{align*}
		\dd(W_k)_M(\delta M) & = \frac{1}{4} \int_{\Sigma_k} \scall*{\delta \II_k}{\III_k - \frac{H_k}{2} \II_k}_{\II_k} \dd{a_{\I_k}} \\
		& = \frac{1}{4 \sqrt{k + 1}} \int_{\Sigma_k} \scall*{\delta \II_k}{\III_k - \frac{H_k}{2} \II_k}_{\II_k} \dd{a_{\II_k}} \tag{eq. \eqref{eq:area_forms}} \\
		& = - \frac{1}{8} \int_{\Sigma_k} \scall{\delta \II_k}{2 \Re q_k}_{\II_k} \dd{a_{\II_k}} \tag{eq. \eqref{eq:hopf_diff_I_and_III}} \\
		& = - \Re \scal{q_k}{\delta c_k} \tag{Lemma \ref{lem:expression_pairing}} .
	\end{align*}
\end{proof}
\end{lemma}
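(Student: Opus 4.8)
The plan is to deduce this from a generic first-order variation formula for the $W$-volume of a compact region with smooth boundary in a hyperbolic $3$-manifold, and then to exploit the very special feature of the region $M_k$: its boundary is a $k$-surface for \emph{every} structure in $\mathscr{G}(M)$. Concretely, I would first establish Proposition \ref{prop:variation_W_volume} (placed in the Appendix), which computes $\dd W$ of a region $N$ with smooth boundary in terms of the first-order variations $\delta\II$ of the second fundamental form and $\delta K_e$ of the extrinsic curvature of $\partial N$. Deriving that formula is the genuinely technical input: it rests on the classical Schl\"afli formula together with a careful bookkeeping of the boundary terms, including the verification that the normal motion of $\partial N$ contributes nothing beyond the $\delta\II$ and $\delta K_e$ pieces. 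Once it is available, I apply it to $N = M_k$.

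Here the key simplification enters. As $M$ ranges over $\mathscr{G}(M)$, the surface $\partial M_k$ is always a $k$-surface, so $K_e \equiv k+1$ is a fixed constant and $\delta K_e \equiv 0$; hence the $\delta K_e$-term of Proposition \ref{prop:variation_W_volume} disappears, leaving
\[
\dd(W_k)_M(\delta M) = \frac{1}{4}\int_{\Sigma_k}\scall*{\delta\II_k}{\III_k - \frac{H_k}{2}\II_k}_{\II_k}\dd{a}_{\I_k}.
\]

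It remains to recognize the right-hand side as $-\Re\scal{q_k}{\delta c_k}$. First I rewrite the area form via \eqref{eq:area_forms}, replacing $\dd{a}_{\I_k}$ by $\frac{1}{\sqrt{k+1}}\dd{a}_{\II_k}$; then I use \eqref{eq:hopf_diff_I_and_III} in the form $\III_k - \frac{H_k}{2}\II_k = -\sqrt{k+1}\,\Re q_k$ to trade the traceless-type tensor for a multiple of $\Re q_k$. These two steps turn the integral into $-\frac{1}{8}\int_{\Sigma_k}\scall{\delta\II_k}{2\Re q_k}_{\II_k}\dd{a}_{\II_k}$. Finally, Lemma \ref{lem:expression_pairing} identifies this integral with $\Re\scal{q_k}{\delta c_k}$, where $\delta c_k$ denotes the variation of the conformal class $c_k=[\II_k]$ induced by $\delta\II_k$ and the bracket is the canonical pairing of $T^*\Teich_\Sigma^\conf$ with $T\Teich_\Sigma^\conf$; this step uses that a holomorphic quadratic differential annihilates conformal rescalings and diffeomorphism directions, so only the Teichm\"uller part of $\delta\II_k$ survives in the integral.

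I expect the main obstacle to be Proposition \ref{prop:variation_W_volume} itself --- the generic Schl\"afli-type variation of the $W$-volume --- together with the justification that it may be applied along a path of hyperbolic structures in which both the ambient metric and the embedded $k$-surface move. By contrast, the reduction carried out above is just tensorial bookkeeping using the identities \eqref{eq:area_forms}, \eqref{eq:hopf_diff_I_and_III} and Lemma \ref{lem:expression_pairing} already recorded in the text.
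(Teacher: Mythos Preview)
Your proposal is correct and follows essentially the same approach as the paper: apply Proposition \ref{prop:variation_W_volume}, use $\delta K_e = 0$ since $\partial M_k$ is a $k$-surface for every structure, then successively invoke \eqref{eq:area_forms}, \eqref{eq:hopf_diff_I_and_III}, and Lemma \ref{lem:expression_pairing} to identify the resulting integral with $-\Re\scal{q_k}{\delta c_k}$. The paper's proof is line-for-line the chain of equalities you outline, and your assessment that the substantive work lies in Proposition \ref{prop:variation_W_volume} is accurate.
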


Starting from Lemma \ref{lem:differential_Wk_volume}, the proof of the Schl\"afli formula for the volumes $W_k$ proceeds in analogy to what done by \citet{schlenker2017notes} for the Schl\"afli formula for the renormalized volume, thanks to the following result:

\begin{theorem}[{Gardiner's formula, \cite[Theorem~8]{gardiner1984measured}}]
	Let $(\Sigma,c)$ be a Riemann surface, and let $\mathcal{F}$ denote the horizontal foliation of a homorphic quadratic differential $q$ of $(\Sigma,c)$. Then the extremal length function $\mappa{\ext_{\mathcal{F}}}{\Teich^\conf_\Sigma}{\R}$ satisfies
	\[
	\dd(\ext_{\mathcal{F}})_c(\delta c) = 2 \Re \scal{q}{\delta c} .
	\]
\end{theorem}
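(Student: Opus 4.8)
The plan is to derive the formula from the Hubbard--Masur theorem together with the fundamental (Reich--Strebel type) inequality for holomorphic quadratic differentials, which is the analytic incarnation of Gardiner's minimal norm property; this is essentially the route of \cite{gardiner1984measured}. Recall first that, by the Hubbard--Masur theorem, for the fixed measured foliation $\mathcal{F}$ and every $c' \in \Teich_\Sigma^\conf$ there is a \emph{unique} holomorphic quadratic differential $q_{c'} \in Q(\Sigma, c')$ whose horizontal foliation is measure-equivalent to $\mathcal{F}$, the assignment $c' \mapsto q_{c'}$ is continuous, and $\ext_{\mathcal{F}}(c') = \lVert q_{c'} \rVert_1 \defin \int_\Sigma \lvert q_{c'} \rvert$ (so $q_c = q$ in our situation). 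I would then fix a smooth path $(c_t)_t$ in $\Teich_\Sigma^\conf$ with $c_0 = c$ and initial velocity $\delta c$, realized by quasiconformal maps $\mappa{f_t}{(\Sigma, c)}{(\Sigma, c_t)}$ isotopic to the identity whose Beltrami coefficients are $t\mu + o(t)$, where $\mu$ is a Beltrami differential representing $\delta c$, so that the cotangent pairing is $\scal{q}{\delta c} = \int_\Sigma q\mu$. Everything then reduces to the first-order expansion
\[
\ext_{\mathcal{F}}(c_t) = \ext_{\mathcal{F}}(c) + 2 t \, \Re \scal{q}{\delta c} + o(t) .
\]

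To obtain this I would prove two matching one-sided estimates. Since $(f_t)_* \mathcal{F} = \mathcal{F}$, the fundamental inequality applied to $f_t$ and to the Hubbard--Masur differential $q$ of $\mathcal{F}$ on $(\Sigma, c)$ gives, after expanding the integrand in powers of $t$,
\[
\ext_{\mathcal{F}}(c_t) \;\le\; \int_\Sigma \frac{\bigl\lvert\, 1 + t\mu\, q/\lvert q \rvert \,\bigr\rvert^2}{1 - t^2 \lvert \mu \rvert^2} \; \lvert q \rvert \;=\; \lVert q \rVert_1 + 2 t \, \Re \int_\Sigma q\mu + o(t) ,
\]
the linear term being $2t \int_\Sigma \Re(\mu q) = 2 t \, \Re \int_\Sigma q\mu$ since $\lvert q \rvert$ is real and positive. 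Applying the same inequality to the inverse maps $\mappa{f_t^{-1}}{(\Sigma, c_t)}{(\Sigma, c)}$ and to the Hubbard--Masur differentials $q_{c_t}$ on $(\Sigma, c_t)$, and using that the Beltrami coefficient of $f_t^{-1}$ equals $-t\mu + o(t)$ once transported back to $(\Sigma, c)$ by $f_t$, together with the continuity $q_{c_t} \to q$, I would obtain
\[
\ext_{\mathcal{F}}(c) \;\le\; \lVert q_{c_t} \rVert_1 - 2 t \, \Re \int_\Sigma q\mu + o(t) \;=\; \ext_{\mathcal{F}}(c_t) - 2 t \, \Re \int_\Sigma q\mu + o(t) ,
\]
using $\ext_{\mathcal{F}}(c_t) = \lVert q_{c_t} \rVert_1$. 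The two displays sandwich $\ext_{\mathcal{F}}(c_t)$ and give the expansion above, hence $\dd(\ext_{\mathcal{F}})_c(\delta c) = 2 \Re \scal{q}{\delta c}$. (Alternatively, one can first establish Gardiner's formula when $\mathcal{F}$ consists of closed leaves, i.e. for a weighted simple closed curve, and then pass to general $\mathcal{F}$ by density, using continuity of both sides; the argument above handles the general case directly.)

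The genuine difficulty is the fundamental inequality used twice above: namely, that the flat metric $\lvert q \rvert$ is extremal for $\mathcal{F}$ on $(\Sigma, c)$, and that pushing it forward by a quasiconformal map with Beltrami coefficient $t\mu$ multiplies the relevant ratio of squared length to area by exactly $\bigl\lvert 1 + t\mu\, q/\lvert q \rvert \bigr\rvert^2 \big/ (1 - t^2 \lvert \mu \rvert^2)$ while keeping the transverse-measure (``height'') data of $\mathcal{F}$ unchanged --- this is Gardiner's minimal norm property, proved by a length--area estimate. The remaining points are bookkeeping and should not cause trouble: the cone points of $\lvert q \rvert$ form a null set and may be ignored; one has to track the first-order behaviour of the Beltrami coefficient of $f_t^{-1}$ under transport by $f_t$; and the passage from two one-sided bounds to a genuine derivative rests on the continuity of the Hubbard--Masur section $c' \mapsto q_{c'}$. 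Expanding the integrals to first order in $t$ is routine.
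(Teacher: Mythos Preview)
The paper does not prove this statement at all: it is quoted verbatim as \cite[Theorem~8]{gardiner1984measured} and then immediately applied to deduce Theorem~\ref{thm:schlafli_Wk}. So there is no ``paper's own proof'' to compare against; Gardiner's formula functions here purely as an imported black box.

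That said, your sketch is a faithful outline of the standard argument (and of Gardiner's original one): identify $\ext_{\mathcal{F}}(c')$ with the $L^1$-norm of the Hubbard--Masur differential, then sandwich $\ext_{\mathcal{F}}(c_t)$ between two first-order expansions coming from the minimal-norm/fundamental inequality applied to $f_t$ and to $f_t^{-1}$. The two points that deserve a little more care, and that you correctly flag, are: (i) the inequality you call ``fundamental'' is really the statement that the flat metric $\lvert q\rvert$ is extremal for the height functional, which is a length--area argument rather than something one can simply invoke; and (ii) in the lower bound you need not just $q_{c_t}\to q$ but enough uniformity to replace $\Re\!\int q_{c_t}\,(-t\mu+o(t))$ by $-t\,\Re\!\int q\mu + o(t)$, i.e.\ continuity of the Hubbard--Masur section in the $L^1$ topology. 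Both are standard, but they are where the actual work lies. Your parenthetical alternative---prove the formula first for weighted simple closed curves via Kerckhoff's $\ext_\gamma = \ell_\gamma^2/\|q_\gamma\|_1$ and then pass to the closure---is also a legitimate route and is sometimes cleaner for exposition.
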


The combination of Lemma \ref{lem:differential_Wk_volume} and the Gardiner's formula immediately implies:

\begin{theorem}[Schl\"afli formula for $W_k$] \label{thm:schlafli_Wk}
	The differential of the function $\mappa{W_k}{\mathscr{G}(M)}{\R}$ can be expressed as follows:
	\[
	\dd{(W_k)}_M(\delta M) = - \frac{1}{2} \dd(\ext_{\mathcal{F}_k})_{c_k}(\delta c_k) ,
	\]
	where $\mathcal{F}_k$ denotes the horizontal foliation of the holomorphic quadratic differential $q_k$.
\end{theorem}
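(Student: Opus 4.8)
The plan is to combine Lemma~\ref{lem:differential_Wk_volume}, which already computes $\dd(W_k)_M(\delta M)$ in terms of the Schwarzian-type pairing $\Re \scal{q_k}{\delta c_k}$, with Gardiner's formula applied to the holomorphic quadratic differential $q_k$. More precisely, Lemma~\ref{lem:differential_Wk_volume} gives
\[
\dd(W_k)_M(\delta M) = - \Re \scal{q_k}{\delta c_k},
\]
while Gardiner's formula, with the choice $q = q_k$ and $\mathcal{F} = \mathcal{F}_k$ the horizontal foliation of $q_k$, yields
\[
\dd(\ext_{\mathcal{F}_k})_{c_k}(\delta c_k) = 2 \Re \scal{q_k}{\delta c_k}.
\]
Dividing the second identity by $2$ and comparing with the first gives $\dd(W_k)_M(\delta M) = - \tfrac12 \dd(\ext_{\mathcal{F}_k})_{c_k}(\delta c_k)$, which is exactly the claim.

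First I would note that the only subtlety is a matching of variables: in Lemma~\ref{lem:differential_Wk_volume} the variation $\delta M$ of the convex co-compact structure induces a variation $\delta c_k$ of the conformal class of $\II_k$ on the boundary $k$-surface $\partial M_k$, and one must check that this is the same tangent vector to $\Teich_\Sigma^\conf$ that appears on the right-hand side of Gardiner's formula. This is immediate: Gardiner's formula is a pointwise statement on $\Teich_\Sigma^\conf$ about the differential of $\ext_{\mathcal{F}_k}$ at the point $c_k$, evaluated on an arbitrary tangent vector, so we simply feed it the tangent vector $\delta c_k$ coming from $\delta M$. Since the map $M \mapsto c_k$ is smooth (this follows from the smoothness statements underlying Theorem~\ref{thm:labourie_parametrization}), the chain rule legitimizes the substitution.

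The remaining point to address is that Gardiner's formula as quoted requires $\mathcal{F}_k$ to be a measured foliation of a fixed Riemann surface $(\Sigma, c_k)$ and that $\ext_{\mathcal{F}_k}$ is then a well-defined function on all of $\Teich_\Sigma^\conf$; here $\mathcal{F}_k$ is indeed the horizontal foliation of the holomorphic quadratic differential $q_k \in Q(\Sigma, c_k)$, which is nonzero (it equals $-\tfrac{2\sqrt{k+1}}{k}$ times the Hopf differential of a non-conformal harmonic map, hence is not identically zero as long as the $k$-surface is not totally umbilic — and in a genuine hyperbolic end it is not), so the horizontal foliation is a genuine measured foliation and $\ext_{\mathcal{F}_k}$ is well-defined and smooth near $c_k$. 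Granting this, there is essentially no obstacle: the theorem is a one-line consequence of the two cited formulas. If anything, the ``hard part'' is entirely contained in the two inputs — Lemma~\ref{lem:differential_Wk_volume}, whose proof relies on the variation formula for the $W$-volume of Proposition~\ref{prop:variation_W_volume} together with the identities \eqref{eq:hopf_diff_I_and_III} and \eqref{eq:area_forms}, and Gardiner's formula itself — while the combination step is purely formal.
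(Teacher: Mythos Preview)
Your proof is correct and follows exactly the paper's approach, which simply states that the theorem is an immediate combination of Lemma~\ref{lem:differential_Wk_volume} and Gardiner's formula. One small inaccuracy in your side remark: the $k$-surface \emph{is} totally umbilic in a Fuchsian end (so $q_k = 0$ there), but this does not affect the argument since both sides of the identity then vanish trivially.
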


\subsection{The renormalized volume} \label{subsec:renormalized_volume}

The definition of renormalized volume $\Vol_R(M)$ of a conformally compact Einstein manifold $M$ is motivated by the $\AdS$/$\CFT$ correspondence of string theory \cite{witten1998anti_de_sitter}, \cite{graham2000volume_and_area}. \citet{krasnov2008renormalized} enlightened its geometrical meaning in the context of convex co-\hsk compact hyperbolic $3$-\hsk manifolds, describing a regularization procedure based on equidistant foliations from convex subsets of $M$. In relation with the study of the geometry of the Teichm\"uller space, the renormalized volume furnishes a K\"ahler potential for the Weil-\hsk Petersson metric of the Teichm\"uller space, and it allows to give a remarkably simple proof of McMullen's Kleinian reciprocity (see \cite{krasnov2008renormalized} and Section \ref{sec:kleinian_reciprocities}). Moreover, its variation formula has been used by \citet{schlenker2013renormalized} to give a quantitative version of Brock's upper bound of the volume of the convex core of a quasi-\hsk Fuchsian manifold in terms of the Weil-\hsk Petersson distance between the hyperbolic metrics on the boundary of the convex core. 

The aim of this Section is to describe a new and simpler way to define the renormalized volume of a quasi-\hsk Fuchsian manifold in terms of the asymptotic of its foliation by $k$-\hsk surfaces. 

First we recall the Schl\"afli-type formula of the renormalized volume:

\begin{theorem}[{\cite[Lemmas~8.3,8.5]{krasnov2008renormalized}, \cite[Theorem~1.2]{schlenker2017notes}}] \label{thm:schlafli_renormalized}
	The differential of the renormalized volume $\mappa{\Vol_R}{\mathscr{G}(M)}{\R}$ can be expressed as follows:
	\[
	\dd{(\Vol_R)}_M(\delta M) = - \Re \scal{q_0}{\delta c_0} = - \frac{1}{2} \dd{(\ext_{\mathcal{F}_0})}_{c_0}(\delta c_0).
	\]
\end{theorem}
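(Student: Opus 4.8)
The plan is to reduce the statement to one computation carried out at the boundary at infinity. The second equality, $-\Re\scal{q_0}{\delta c_0}=-\frac{1}{2}\,\dd(\ext_{\mathcal{F}_0})_{c_0}(\delta c_0)$, is an immediate application of Gardiner's formula to the holomorphic quadratic differential $q=q_0$, whose horizontal foliation is by definition $\mathcal{F}_0$. So the real content is the first identity $\dd(\Vol_R)_M(\delta M)=-\Re\scal{q_0}{\delta c_0}$, which I would prove by running the argument of Lemma \ref{lem:differential_Wk_volume}, with the $k$-surface replaced by an equidistant surface escaping to $\partial_\infty M$.

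Recall that $\Vol_R(M)$ arises from the $W$-volume of equidistant foliations: fix a convex compact subset $N\subset M$ with smooth boundary and let $N_r$ be its $r$-neighbourhood. By Krasnov's computation, $r\mapsto W(N_r)=\Vol(N_r)-\frac{1}{4}\int_{\partial N_r}H\,\dd{a}$ equals a constant plus a term linear in $r$ whose slope is a universal multiple of $\chi(\partial M)$; subtracting this linear term and letting $r\to\infty$, after rescaling the induced metric by $e^{-2r}$ to recover a metric at infinity, yields $\Vol_R(M)$. On any fixed fundamental domain the boundary data of $\partial N_r$ converge to the asymptotic data of $\partial_\infty M$ via the Epstein--Fefferman--Graham expansion
\[
\I_r=\frac{e^{2r}}{4}\,g+\frac{1}{2}\,\II^\infty+O(e^{-2r}),
\]
with analogous expansions for $\II_r$, $\III_r$ and $H_r$; here $g$ is a representative of $c_0$ and $\II^\infty$ is the second fundamental form at infinity. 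Choosing $g=h_0$, the hyperbolic metric in $c_0$ underlying the normalization of $\Vol_R$, the $h_0$-traceless part of $\II^\infty$ equals $\Re q_0$ up to a universal factor.

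I would then feed the region $N_r$ into the variation formula for the $W$-volume of Proposition \ref{prop:variation_W_volume}. Two points require care. First, since $\partial N_r$ is not of constant extrinsic curvature, the $\delta K_e$ term does not drop out as it did in Lemma \ref{lem:differential_Wk_volume}; instead one checks that this term, together with the $r$-linear part of $W(N_r)$ whose slope is a topological constant, has vanishing first variation in the limit $r\to\infty$. Second -- and this is the genuine obstacle -- one must justify exchanging the first-order variation $\delta$ with the limit $r\to\infty$, which amounts to the smooth (indeed real-analytic) dependence of the Fefferman--Graham expansion on the conformal structure at infinity, uniformly on a fundamental domain; this is precisely the analytic input of \cite{krasnov2008renormalized}. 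Granting it, the right-hand side
\[
\frac{1}{4}\int_{\partial N_r}\scall*{\delta\II_r}{\III_r-\frac{H_r}{2}\II_r}_{\II_r}\dd{a}_{\I_r}
\]
of the $W$-volume variation formula, once the expansions are inserted and one uses that $\III_r-\frac{H_r}{2}\II_r$ is $\II_r$-traceless, converges to an integral pairing $\delta g$ against the traceless part of $\II^\infty$, which by Lemma \ref{lem:expression_pairing} equals $-\Re\scal{q_0}{\delta c_0}$. Together with Gardiner's formula this gives the stated Schl\"afli-type identity.
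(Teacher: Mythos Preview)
The paper does not prove this theorem: it is quoted with attribution to \cite{krasnov2008renormalized} and \cite{schlenker2017notes} and then used as an input (notably in the proof of Theorem~\ref{thm:renormalized_vol_limit_of_Wk}). There is therefore no argument in the paper to compare your proposal against.

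As a sketch of the proof from the cited references, your outline is broadly on the right track, but two points deserve sharpening. First, the cleaner route taken in \cite{schlenker2017notes} avoids the limit-exchange issue you flag: rather than pushing equidistant surfaces $\partial N_r$ to infinity, one fixes the Epstein surface associated to the hyperbolic representative $h_0$ of $c_0$, for which $W$ equals $\Vol_R$ on the nose (up to a topological constant), and applies the variation formula once to this finite region. This makes your ``genuine obstacle'' disappear. Second, your handling of the $\delta K_e$ term is too loose: for the equidistant family $K_e$ is not constant, and the claim that its contribution plus the $r$-linear part has vanishing variation in the limit needs an actual computation with the asymptotic expansion rather than a check. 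In the Epstein-surface approach this issue is absorbed into the explicit relation between $\II^\infty$ and $q_0$ (the trace part of $\II^\infty$ being fixed by the curvature condition on $h_0$), which is what makes the pairing collapse to $-\Re\scal{q_0}{\delta c_0}$ via Lemma~\ref{lem:expression_pairing}. Your reduction of the second equality to Gardiner's formula is correct.
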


The combination of Corollary \ref{cor:convergence_phik} and Theorem \ref{thm:schlafli_Wk} allows us to give the following description of the renormalized volume $\Vol_R(M)$:

\setcounter{theoremx}{2}

\begin{theoremx}
	The renormalized volume of a quasi-\hsk Fuchsian manifold $M$ satisfies
	\[
	\Vol_R(M) = \lim_{k \to 0^-} \left( W_k(M) - \pi \abs{\chi(\partial M)} \arctanh \sqrt{k + 1} \right) .
	\]
	\begin{proof}
		Let $\widetilde{W}_k(M) \defin W_k(M) - \pi \abs{\chi(\partial M)} \arctanh \sqrt{k + 1}$. We will prove the assertion by showing the following facts:
		\begin{enumerate}[i)]
			\item the differentials of the functions $\widetilde{W}_k$ converge, uniformly over compact subsets of $\QF(\Sigma)$, to the differential of the renormalized volume $\Vol_R$;
			\item the limit, as $k$ goes to $0$, of $\widetilde{W}_k(M)$ coincides with $\Vol_R(M)$ whenever $M$ is Fuchsian.
		\end{enumerate}
		Then the assertion will follow from the connectedness of the space $\QF(\Sigma)$.
		
		The first step easily follows from our previous observations. By Corollary \ref{cor:convergence_phik} and Theorem \ref{thm:schlafli_Wk}, $\dd{\widetilde{W}_k}$ converges, uniformly over compact subsets of $\QF(\Sigma)$, to $- \frac{1}{2} \dd(\ext_{\mathcal{F}_0})(\delta c_0)$, where $\mathcal{F}_0$ is the horizontal foliation of the Schwarzian differential at infinity $q_0$, and $\delta c_0$ is the variation of the conformal structure of $\partial_\infty M$. By Theorem \ref{thm:schlafli_renormalized}, this coincides with $\dd{\Vol_R}$.
		
		It remains to prove the second part of the statement. Let $M$ be a Fuchsian manifold. The equidistant surfaces from the convex core of $M$ at distance $\varepsilon(k) \defin \arctanh \sqrt{k + 1}$ are the two $k$-\hsk surfaces of $M$. Their fundamental forms can be expressed as follows:
		\[
		\I_k = - \frac{1}{k} h, \qquad \II_k = - \frac{k}{\sqrt{k + 1}} h , \qquad \III_k = - \frac{k}{k + 1} h ,
		\]
		where $h$ is the hyperbolic metric on the totally geodesic surface sitting inside $M$. From here, we easily see that:
		\[
		\int_{\Sigma_k} H_k \dd{a_{\I_k}} = 2 \pi \abs{\chi(\partial M)} \sinh 2 \varepsilon(k), \qquad V(M_k) = \pi \abs{\chi(\partial M)} \left( \frac{\sinh 2 \varepsilon(k)}{2} + \varepsilon(k) \right) .
		\]
		In particular, for every Fuchsian manifold $M$, we have
		\[
		W_k(M) = V(M_k) - \frac{1}{4} \int_{\Sigma_k} H_k \dd{a_{\I_k}} = \pi \abs{\chi(\partial M)} \arctanh \sqrt{k + 1} .
		\]
		Therefore the functions $\widetilde{W}_k$ vanish identically over the Fuchsian locus, and the same happens for $\Vol_R(M)$. This concludes the proof of the second step, and therefore of the statement.
	\end{proof}
\end{theoremx}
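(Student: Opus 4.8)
The plan is to prove the identity $\Vol_R(M) = \lim_{k \to 0^-} \widetilde{W}_k(M)$, where $\widetilde{W}_k(M) \defin W_k(M) - \pi \abs{\chi(\partial M)} \arctanh \sqrt{k + 1}$, by a connectedness argument on the space $\QF(\Sigma)$ of quasi-Fuchsian structures. Concretely, it suffices to check two things: first, that the (limit of the) differentials of $\widetilde{W}_k$ agrees with the differential of $\Vol_R$ uniformly on compact subsets; and second, that the identity holds at a single point of each connected component, for which the Fuchsian locus is the natural choice since both sides can be computed explicitly there.

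For the first step I would invoke Corollary \ref{cor:convergence_phik}, which gives that the maps $\Phi_k$ converge $\mathscr{C}^1$-uniformly on compact subsets to the Schwarzian parametrization $\Sch$; in particular the data $(c_k, q_k)$ and their first derivatives converge to $(c_0, q_0)$. Combined with the Schläfli formula for $W_k$ (Theorem \ref{thm:schlafli_Wk}), $\dd(W_k)_M(\delta M) = -\tfrac{1}{2}\dd(\ext_{\mathcal{F}_k})_{c_k}(\delta c_k)$, and with Gardiner's formula expressing $\dd(\ext_{\mathcal{F}_k})$ in terms of $q_k$, we get that $\dd(W_k)_M$ converges uniformly on compacta to $-\Re \scal{q_0}{\delta c_0}$. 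The correction term $\pi\abs{\chi(\partial M)}\arctanh\sqrt{k+1}$ is a constant on $\QF(\Sigma)$ (it only depends on the topology of $\partial M$), so it does not affect the differential at all, and hence $\dd \widetilde{W}_k \to \dd \Vol_R$ by Theorem \ref{thm:schlafli_renormalized}.

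For the second step I would restrict to a Fuchsian manifold $M$, where the two ends are symmetric and the $k$-surfaces are exactly the equidistant surfaces at distance $\varepsilon(k) = \arctanh\sqrt{k+1}$ from the totally geodesic core surface $(\Sigma, h)$. Using the standard formulas for equidistant surfaces in $\Hyp^3$, one reads off $\I_k = -\tfrac{1}{k}h$, $\II_k = -\tfrac{k}{\sqrt{k+1}}h$, $\III_k = -\tfrac{k}{k+1}h$; a direct computation (using Gauss–Bonnet for $\int H_k \dd a$ and the standard volume-of-collar formula) then yields $\int_{\Sigma_k} H_k \dd a_{\I_k} = 2\pi\abs{\chi(\partial M)}\sinh 2\varepsilon(k)$ and $V(M_k) = \pi\abs{\chi(\partial M)}\bigl(\tfrac{\sinh 2\varepsilon(k)}{2} + \varepsilon(k)\bigr)$, so that $W_k(M) = \pi\abs{\chi(\partial M)}\varepsilon(k) = \pi\abs{\chi(\partial M)}\arctanh\sqrt{k+1}$, making $\widetilde{W}_k(M) \equiv 0$. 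Since $\Vol_R$ also vanishes identically on the Fuchsian locus, the identity holds there.

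Finally, I would assemble the two steps: let $f_k \defin \widetilde{W}_k - \Vol_R$ on $\QF(\Sigma)$. Step (i) says $\dd f_k \to 0$ uniformly on compacta, and step (ii) says $f_k \to 0$ at the Fuchsian point in each component. Since $\QF(\Sigma)$ is connected and $f_k$ is obtained by integrating $\dd f_k$ along paths from the Fuchsian locus, $f_k \to 0$ pointwise everywhere, which is the claim. The main obstacle I anticipate is making rigorous the interchange of limit and differential — i.e. passing from "$\dd \widetilde{W}_k \to \dd \Vol_R$ uniformly on compacta" plus "$\widetilde{W}_k \to 0$ at one point" to "$\widetilde{W}_k \to \Vol_R$ pointwise." This requires that $\widetilde{W}_k$ and their differentials be controlled simultaneously; the cleanest route is to fix a compact connected neighborhood of any given $M$ inside $\QF(\Sigma)$ containing a path to the Fuchsian locus, apply the fundamental theorem of calculus along that path, and use the uniform $\mathscr{C}^1$-convergence from Corollary \ref{cor:convergence_phik} to pass to the limit under the integral. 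The underlying smoothness in $k$ of all quantities, also furnished by Corollary \ref{cor:convergence_phik} via Quinn's implicit-function-theorem construction, is what guarantees there are no pathologies in this limiting argument.
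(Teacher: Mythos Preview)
Your proposal is correct and follows essentially the same approach as the paper's proof: the same two-step connectedness argument (convergence of differentials via Corollary \ref{cor:convergence_phik} and Theorem \ref{thm:schlafli_Wk}, plus explicit vanishing on the Fuchsian locus), with the same computations throughout. If anything, your final paragraph spelling out the path-integration argument that passes from $\dd\widetilde{W}_k \to \dd\Vol_R$ plus pointwise agreement at a Fuchsian basepoint to pointwise convergence everywhere is more explicit than the paper, which simply asserts that the conclusion follows from connectedness of $\QF(\Sigma)$.
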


\begin{remark}
	The quantity $\arctanh \sqrt{k + 1}$ is equal to the distance of the $k$-\hsk surface from the convex core in the Fuchsian case. For a generic quasi-\hsk Fuchsian manifold $M$, the geometric maximum principle \cite[Lemme~2.5.1]{labourie2000une_lemme} shows that the $k$-\hsk surface is at distance less or equal than $\arctanh \sqrt{k + 1}$ from the convex core $\CC M$.
\end{remark}	
	
In the proof that we gave above, we \emph{assumed} the existence of the renormalized volume function $\Vol_R$ and we proved the convergence of the functions $W_k$ to $\Vol_R$. In fact, with some additional work, it is possible to show that the sequence of functions $(\widetilde{W}_k)_k$ is convergent \emph{without} assuming the existence of the function $\Vol_R$. In other words, we can \emph{define} the renormalized volume $\Vol_R(M)$ of a convex co-\hsk compact hyperbolic manifold $M$ as the limit of the sequence $(\widetilde{W}_k(M))_k$.

\subsection{\texorpdfstring{$\Vol_k^*$}{Vk*}-\hsk volumes} 

In analogy to what done for the $W_k$-\hsk volumes, we define 
\[
\Vol^*_k(M) \defin V^*(M_k) = \Vol(M_k) - \frac{1}{2}  \int_{\partial M_k} H_k \dd{a}_{\I_k} .
\]
The Schl\"afli formula for $\Vol_k^*$ is a direct consequence of the variation formula for the dual volume (Proposition \ref{prop:variation_dual_volume}) and the following expression for the variation of the length function $L_{h^*}$:

\begin{lemma}[{\cite[Lemma~7.9]{bonsante2015a_cyclic}}] \label{lem:variation_hyperbolic_length}
	\[
	\dd(L_{h^*})(\delta h) = - \frac{1}{2} \int_\Sigma \scall{\delta h}{h(b \cdot, \cdot) - \tr(b) h}_h \dd{a_h} .
	\]
\end{lemma}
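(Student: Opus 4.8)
The plan is to use the energy-theoretic description of $L_{h^*}$ recalled after Theorem~\ref{thm:labourie_minimal_lagr1}. Fix $h^*$ and let $h$ vary. For each $h$ denote by $b = b(h,h^*)$ the Labourie operator of the normalized pair and by $c = c(h,h^*)$ the conformal class of $h(b\cdot,\cdot)$; recall that then the maps $\mappa{\id}{(\Sigma,c)}{(\Sigma,h)}$ and $\mappa{\id}{(\Sigma,c)}{(\Sigma,h^*)}$ are harmonic with opposite Hopf differentials, so that
\[
L_{h^*}(h) = j(h,h^*) = \mathcal{E}(c,h) + \mathcal{E}(c,h^*) ,
\]
where $\mathcal{E}(c,g)$ is the energy of the harmonic map $(\Sigma,c)\to(\Sigma,g)$ isotopic to the identity (here I use that $j$ equals twice either of these energies, hence their sum). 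Differentiating in $h$, the chain rule gives
\[
\dd(L_{h^*})(\delta h) = \partial_h\mathcal{E}(c,h)(\delta h) + \bigl(\partial_c\mathcal{E}(c,h) + \partial_c\mathcal{E}(c,h^*)\bigr)(\delta c) ,
\]
with $\delta c$ the variation of $c(h,h^*)$ induced by $\delta h$. The first step is to observe that the second summand vanishes: by Wolf's theory (Theorem~\ref{thm:wolf_param} and Remark~\ref{rmk:rels_hopf_traceless_part}) the differential of $\mathcal{E}(\cdot,g)$ at $c$ is a nonzero multiple of $\Re\scal{q(c,g)}{\cdot}$, so the conformal class $c(h,h^*)$, which is characterized precisely by $q(c,h) = -q(c,h^*)$ (Definition~\ref{def:minimal_lagr}), is a critical point of $c \mapsto \mathcal{E}(c,h) + \mathcal{E}(c,h^*)$. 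Hence $\dd(L_{h^*})(\delta h) = \partial_h\mathcal{E}(c,h)(\delta h)$.

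The second step is to compute this ``direct'' variation. Since $u(c,h) = \id$ is a critical point of the functional $u \mapsto E(u;h)$ with $c$ and $h$ fixed, the variation of $\mathcal{E}(c,h) = E(u(c,h);h)$ under $h \mapsto h + \delta h$ at frozen $c$ picks up no contribution from the motion of the harmonic map, and equals the partial derivative of the Dirichlet energy in the target metric: for any metric $g' \in c$,
\[
\partial_h\mathcal{E}(c,h)(\delta h) = \frac{1}{2}\int_\Sigma \tr_{g'}(\delta h) \, \dd{a_{g'}} .
\]
I would then choose the representative $g' = h(b\cdot,\cdot) \in c$. For this choice $(g')^{-1}\delta h = b^{-1}h^{-1}\delta h$ as endomorphisms of $T\Sigma$, so $\tr_{g'}(\delta h) = \tr(b^{-1}h^{-1}\delta h) = \scall{\delta h}{h(b^{-1}\cdot,\cdot)}_h$, while $\dd{a_{g'}} = \sqrt{\det b}\,\dd{a_h} = \dd{a_h}$ because $\det b = 1$. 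This yields
\[
\dd(L_{h^*})(\delta h) = \frac{1}{2}\int_\Sigma \scall{\delta h}{h(b^{-1}\cdot,\cdot)}_h \, \dd{a_h} .
\]

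The final step is purely linear-algebraic. Since $b$ is $h$-self-adjoint with unit determinant on the rank-two bundle $T\Sigma$, the Cayley--Hamilton identity gives $b^{-1} = \tr(b)\,\1 - b$, whence
\[
h(b^{-1}\cdot,\cdot) = \tr(b)\,h - h(b\cdot,\cdot) = -\bigl(h(b\cdot,\cdot) - \tr(b)\,h\bigr) ,
\]
and substituting this into the previous display produces the stated formula.

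I expect the only delicate point to be the envelope argument of the first step: one must know that $c(h,h^*)$ depends smoothly on $h$ (which follows from Theorem~\ref{thm:labourie_minimal_lagr1}, i.e.\ from $\mathcal{H}$ being a diffeomorphism) and, more importantly, that pairing $\dd\mathcal{E}(\cdot,g)$ against a conformal variation returns a non-degenerate multiple of $\Re q(c,g)$, so that the normalization condition $q(c,h) + q(c,h^*) = 0$ is exactly the stationarity of the energy sum in the $c$-variable. A more hands-on alternative would be to differentiate $\int_\Sigma \tr(b)\,\dd{a_h}$ directly, but that forces one to control how the Codazzi representative of $h^*$ moves with $h$, which is precisely the bookkeeping the energy formulation avoids; everything else reduces to the routine computations indicated above.
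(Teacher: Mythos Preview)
Your argument is correct. The paper does not prove this lemma; it simply quotes it from \cite[Lemma~7.9]{bonsante2015a_cyclic}, so there is no in-paper proof to compare against. Your energy-theoretic approach---writing $L_{h^*}(h)=\mathcal{E}(c,h)+\mathcal{E}(c,h^*)$, killing the $\delta c$-contribution via the normalization $q(c,h)+q(c,h^*)=0$, computing the direct $\partial_h$-term with the representative $g'=h(b\cdot,\cdot)\in c$, and finishing with Cayley--Hamilton for $b$---is clean and self-contained. Every step checks out: the identity $j=2\mathcal{E}(c,h)$ follows from $\tr(b^{-1})=\tr(b)$ (since $\det b=1$ in rank two), the envelope argument for the harmonic map is justified because harmonic maps into hyperbolic targets are energy minimizers in their homotopy class, and the linear algebra at the end is immediate. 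Your self-assessment of the only subtle point (smooth dependence of $c$ on $h$ and the identification of $\partial_c\mathcal{E}$ with the Hopf differential) is accurate, and both are standard.
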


In order to simplify the next statement, we extend the definition of the function $j$ to constant curvature metrics, not necessarily hyperbolic. In particular, if $g$ and $g'$ are Riemannian metrics of with constant Gaussian curvatures $K$ and $K'$, then we set $j(g,g')$ to be $(K K')^{- 1/2} j((- K) g, (- K') g')$ (observe that $(- K) g$ and $(- K') g'$ are hyperbolic). In this way, the function $j$ is $1/2$-\hsk homogeneous in both its arguments, As before, $L_g$ will denote the function $j(g,\cdot)$.

\begin{theorem}[Schl\"afli formula for $\Vol_k^*$] \label{thm:schlafli_Vk*}
	The differential of the function $\mappa{\Vol_k^*}{\mathscr{G}(M)}{\R}$ can be expressed as follows:
	\[
	\dd{(\Vol_k^*)}_M(\delta M) = - \frac{1}{2} \dd{(L_{\III_k})}(\delta \I_k).
	\]
	\begin{proof}
		By Proposition \ref{prop:variation_dual_volume}, the variation of $\Vol_k^*$ verifies
		\[
		\dd{(\Vol_k^*)}_M(\delta M) = \frac{1}{4} \int_{\Sigma_k} \scall{\delta \I_k}{\II_k - H_k \I_k}_{\I_k} \dd{a}_{\I_k} .
		\]
		Using the definitions of $h_k$, $h_k^*$, we can rephrase the expression above as follows:
		\[
		\dd{(\Vol_k^*)}_M(\delta M) = - \frac{\sqrt{k + 1}}{4 k} \int_{\Sigma_k} \scall{\delta h_k}{h_k(b_k \cdot, \cdot) - \tr(b_k) h_k} \dd{a}_{h_k} ,
		\]
		where $b_k = \sqrt{k + 1} \ B_k$ is the Labourie operator between $h_k$ and $h_k^*$ (see Definition \ref{def:labourie_operator}). By Lemma \ref{lem:variation_hyperbolic_length}, the expression above is equal to $\frac{\sqrt{k + 1}}{2 k} \dd{(L_{h_k^*})}_{h_k}(\delta h_k) = - \frac{1}{2} \dd{(L_{\III_k})}_{\I_k}(\delta \I_k)$.
	\end{proof}
\end{theorem}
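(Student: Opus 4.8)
The plan is to derive the identity from two facts already at our disposal: the general variation formula for the dual volume of a hyperbolic $3$-manifold with smooth boundary (Proposition~\ref{prop:variation_dual_volume} in the Appendix), and the first-variation formula of Bonsante--Mondello--Schlenker for the hyperbolic length functions $L_{h^*}$ (Lemma~\ref{lem:variation_hyperbolic_length}). The bridge between the two is the remark made in Section~\ref{subsec:constant_extr_curvature}: the Labourie operator of the normalized pair $(h_k,h_k^*)$ is $b_k=\frac{1}{\sqrt{k+1}}B_k$, the unique positive rescaling of the shape operator $B_k$ of $\Sigma_k$ having determinant $1$.

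First I would apply Proposition~\ref{prop:variation_dual_volume} to the compact core $M_k\subset M$. Since $\partial M_k$ is a union of $k$-surfaces for \emph{every} convex co-compact structure on $M$, it stays a $k$-surface along any deformation $\delta M$, so the term involving the variation of the extrinsic curvature drops out and one is left with
\[
\dd{(\Vol_k^*)}_M(\delta M)=\frac{1}{4}\int_{\Sigma_k}\scall*{\delta\I_k}{\II_k-H_k\I_k}_{\I_k}\dd{a}_{\I_k}.
\]

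Next I would rewrite the integrand in the language of $(h_k,h_k^*,b_k)$. From $\II_k=\I_k(B_k\cdot,\cdot)$ and $b_k=\frac{1}{\sqrt{k+1}}B_k$ one gets $h_k(b_k\cdot,\cdot)-\tr(b_k)h_k=\frac{-k}{\sqrt{k+1}}(\II_k-H_k\I_k)$; carrying through the rescalings of $\delta\I_k$, of the area form and of the $L^2$-pairing of symmetric $2$-tensors under $\I_k\mapsto h_k=(-k)\I_k$, the integral above becomes a fixed scalar times $\int_{\Sigma_k}\scall*{\delta h_k}{h_k(b_k\cdot,\cdot)-\tr(b_k)h_k}_{h_k}\dd{a}_{h_k}$. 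Concretely I expect to reach $\dd{(\Vol_k^*)}_M(\delta M)=-\frac{\sqrt{k+1}}{4k}\int_{\Sigma_k}\scall*{\delta h_k}{h_k(b_k\cdot,\cdot)-\tr(b_k)h_k}_{h_k}\dd{a}_{h_k}$, which by Lemma~\ref{lem:variation_hyperbolic_length} equals $\frac{\sqrt{k+1}}{2k}\dd{(L_{h_k^*})}_{h_k}(\delta h_k)$.

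Finally I would pass from $L_{h_k^*}$ to $L_{\III_k}$ using the $1/2$-homogeneous extension of $j$ to constant-curvature metrics: since $\I_k$ has curvature $k$ and $\III_k$ has curvature $\frac{k}{k+1}$, with $(-k)\I_k=h_k$ and $(-\frac{k}{k+1})\III_k=h_k^*$, one has $L_{\III_k}(g)=-\frac{\sqrt{k+1}}{k}L_{h_k^*}\bigl((-k)\,g\bigr)$ on curvature-$k$ metrics, hence $\dd{(L_{\III_k})}_{\I_k}(\delta\I_k)=-\frac{\sqrt{k+1}}{k}\dd{(L_{h_k^*})}_{h_k}(\delta h_k)$. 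Substituting, the factors of $-k$ and $\sqrt{k+1}$ cancel and leave exactly $-\frac12$, which is the claim. The one genuinely delicate point is precisely this bookkeeping of constants — keeping straight the conformal factor $-k$ hidden in $h_k$ (and therefore in $\delta h_k$, $\dd{a}_{h_k}$ and $\scall*{\cdot}{\cdot}_{h_k}$), the factor $\sqrt{k+1}$ entering through $b_k=\frac{1}{\sqrt{k+1}}B_k$, and the homogeneity factor in the very definition of $L_{\III_k}$; there is no conceptual obstacle, since Proposition~\ref{prop:variation_dual_volume} and Lemma~\ref{lem:variation_hyperbolic_length} already perform all the analytic work.
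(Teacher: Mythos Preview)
Your argument is correct and follows the paper's proof essentially line for line: apply Proposition~\ref{prop:variation_dual_volume}, rescale to $(h_k,h_k^*,b_k)$, invoke Lemma~\ref{lem:variation_hyperbolic_length}, and then use the $1/2$-homogeneity of $j$ to pass from $L_{h_k^*}$ to $L_{\III_k}$. One minor remark: Proposition~\ref{prop:variation_dual_volume} has no $\delta K_e$ term to begin with (that appears only in Proposition~\ref{prop:variation_W_volume}), so your comment about it ``dropping out'' is unnecessary, and you correctly take $b_k=\frac{1}{\sqrt{k+1}}B_k$ as in Section~\ref{subsec:constant_extr_curvature}.
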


Let $M$ be a convex co-\hsk compact hyperbolic $3$-\hsk manifold. By Theorem \ref{thm:existence_k_foliation}, the convex subsets $M_k$ approximate, as $k$ goes to $-1$, the \emph{convex core} of $M$, which is the smallest non-\hsk empty convex subset of $M$ (here $C \subseteq M$ is convex if, for every $x, y \in C$, and \emph{for every} geodesic arc $\gamma$ starting at $x$ and ending at $y$, $\gamma$ is entirely contained in $C$). While the surfaces $\partial M_k$ are smoothly embedded in $M$, the boundary of the convex core has the structure of a convex pleated surface, which is only topologically embedded in $M$. However, it is possible to reasonably extend the notion of dual volume to the convex core too, by setting:
\[
V^*_\CC(M) \defin V^*_{-1}(M) = V(\CC M) - \frac{1}{2} L_\mu(h) .
\]
It is not difficult to see that this definition is continuous in $k$, i. e. the limit as $k$ goes to $0$ of the volumes $\Vol_k^*(M)$ is equal to $\Vol_\CC (M)$ (see e. g. \cite[Section~2]{mazzoli2018the_dual}). It turns out that the variation formula of the dual volume of the convex core is, at least formally, the limit of the Sch\"afli formulas of $\Vol_k^*$ as $k$ goes to $-1$, in light of Proposition \ref{prop:limit_psik}. Indeed, we have the following:

\begin{theorem}[{Dual Bonahon-\hsk Schl\"afli formula, \cite{krasnov2009symplectic}, \cite{mazzoli2018the_dual}}] \label{thm:dual_bonahon_schlafli}
	\[
	\dd{(\Vol_\CC^*)}_M(\delta M) = - \frac{1}{2} \dd{(L_\mu)}(\delta h)
	\]
\end{theorem}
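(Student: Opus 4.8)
The plan is to obtain the formula as the $k\to -1$ limit of the Schl\"afli formula for $\Vol_k^*$ proved in Theorem \ref{thm:schlafli_Vk*}, and — both as a cross-check and as a way to sidestep a regularity issue — to recall the direct route via Bonahon's Schl\"afli-type formula. For the \emph{limiting route}, recall that $\Vol_k^*(M)\to \Vol_\CC^*(M)$ pointwise on $\mathscr{G}(M)$ as $k\to-1$, while $\dd(\Vol_k^*)_M(\delta M)=-\tfrac12\,\dd(L_{\III_k})_{\I_k}(\delta\I_k)$ by Theorem \ref{thm:schlafli_Vk*}. First I would check that this right-hand side converges to $-\tfrac12\,\dd(L_\mu)_h(\delta h)$ uniformly on compact subsets of $\mathscr{G}(M)$: the first fundamental forms $\I_k$ converge, in $M$, to the induced metric $h$ of the pleated boundary $\partial(\CC M)$ by Theorems \ref{thm:existence_k_foliation} and \ref{thm:convergence_psi_k_hat}, while the covectors $\dd(L_{\III_k})_{\I_k}$, which up to a multiplicative constant are exactly the cotangent component of $\Psi_k$, converge to $\dd(L_\mu)_h$ by Propositions \ref{prop:conergence_length_functions} and \ref{prop:limit_psik} together with the analyticity of the functions $L_{h^*}$. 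Once the differentials are known to converge locally uniformly, the standard fact that pointwise convergence of functions plus locally uniform convergence of their differentials forces the limit to be $C^1$ with the limiting differential gives the statement.

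For the \emph{direct route}, one proves the formula exactly as Theorem \ref{thm:schlafli_Vk*} was proved, replacing the smooth ingredients by their pleated analogues: Bonahon's Schl\"afli-type formula for $\Vol(\CC M)$ (see \cite{krasnov2009symplectic}) plays the role of Proposition \ref{prop:variation_dual_volume}, and the first-order variation of the length $L_\mu(h)$ of the bending lamination, with both $\mu$ and $h$ varying, plays the role of Lemma \ref{lem:variation_hyperbolic_length}. Writing $\Vol_\CC^*=\Vol(\CC M)-\tfrac12 L_\mu(h)$ and differentiating, the contribution to $\dd(L_\mu(h))$ coming from the variation of the lamination $\mu$ cancels against (twice) Bonahon's term, leaving precisely $-\tfrac12\,\dd(L_\mu)_h(\delta h)$. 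This is the argument carried out in \cite{krasnov2009symplectic} and \cite{mazzoli2018the_dual}.

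The hard part will be the regularity in the limiting route: one must upgrade the \emph{pointwise} convergence $\Psi_k\to \dd L\circ\Th$ of Proposition \ref{prop:limit_psik} to convergence uniform on compact subsets — equivalently, uniform control as $k\to-1$ of the covectors $\dd(L_{\III_k})_{\I_k}$ — since only this guarantees that $\Vol_\CC^*$ is differentiable with the claimed differential. In the direct route the analogous difficulty is internal to Bonahon's formula itself, namely the verification that the lamination-variation term carries exactly the coefficient producing the above cancellation; for this reason I would, in the present paper, simply quote the statement from \cite{krasnov2009symplectic} and \cite{mazzoli2018the_dual}.
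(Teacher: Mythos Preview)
The paper does not prove this theorem at all: it is stated as a cited result from \cite{krasnov2009symplectic} and \cite{mazzoli2018the_dual}, followed only by a short paragraph of commentary. Your bottom line --- ``simply quote the statement'' --- is therefore exactly what the paper does.

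A couple of remarks on the two routes you sketch. Your \emph{direct route} is the argument of \cite{krasnov2009symplectic}: differentiate $\Vol_\CC^* = \Vol(\CC M) - \tfrac{1}{2} L_\mu(h)$, feed in Bonahon's Schl\"afli formula for $\Vol(\CC M)$, and observe the cancellation with the $\delta\mu$-contribution to $\delta(L_\mu(h))$. Note however that the paper's commentary singles out the \emph{other} reference, \cite{mazzoli2018the_dual}, precisely because that proof \emph{avoids} the first-order variation of the bending lamination; so your direct-route sketch captures only one of the two cited approaches.

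Your \emph{limiting route} is not in the paper and would be a genuinely different argument. The gap you flag is real and is not filled anywhere in the paper: Proposition~\ref{prop:limit_psik} gives only pointwise convergence $\Psi_k \to \dd L \circ \Th$, and nothing here upgrades this to local uniformity of the cotangent components, which is what you would need to pass derivatives through the limit. There is also a logical-ordering hazard: the convergence statements you invoke (Proposition~\ref{prop:conergence_length_functions} in particular) themselves cite \cite{mazzoli2018the_dual}, so depending on what is used there the limiting route may quietly rely on the very formula you are trying to derive. For both reasons the paper is right to treat Theorem~\ref{thm:dual_bonahon_schlafli} as input rather than output.
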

This result has been first established by \citet{krasnov2009symplectic} applying the variation formula of the "standard" hyperbolic volume of the convex core, proved by \citet{bonahon1998schlafli}. In a recent work \cite{mazzoli2018the_dual}, we gave a new proof of this result that does not involve the study of the first order variation of the bending measured lamination, which was a highly technical difficulty to handle in the original work \cite{bonahon1998schlafli}.


\section{Volumes and symplectomorphisms}

The aim of this section is to study the properties of the maps $\Phi_k$ and $\Psi_{k'}$. In particular, we will prove that the diffeomorphisms $\mappa{\Phi_k \circ \Psi_{k'}^{-1}}{T^* \Teich_\Sigma^\hyp}{T^* \Teich_\Sigma^\conf}$ are symplectic with respect to the cotangent symplectic structures of $T^* \Teich_\Sigma^\hyp$ and $T^* \Teich_\Sigma^\hyp$, up to a multiplicative factor. This fact extends the results of \citet{krasnov2009symplectic} and \citet{bonsante2015a_cyclic} concerning the grafting map $\Gr$ and the smooth grafting map $\SGr$, respectively. 

\subsubsection*{Relative volumes}

Let $E$ be a hyperbolic end. We denote by $E_k$ the portion of $E$ that is in between the concave pleated boundary $\partial E$ and the $k$-\hsk surface $\Sigma_k$ of $E$. Now we define
\[
w_k(E) \defin \Vol(E_k) - \frac{1}{4} \int_{\Sigma_k} H_k \dd{a}_k + \frac{1}{2} L_\mu(m) ,
\]
where $H_k$ and $\dd{a}_k$ are the mean curvature and the area form of $\Sigma_k$, and $L_\mu(m)$ is the length of the bending measure $\mu$ with respect to the hyperbolic metric $m$ of $\partial E$. Similarly, we define
\[
v_k^*(E) \defin \Vol(E_k) - \frac{1}{2} \int_{\Sigma_k} H_k \dd{a}_k + \frac{1}{2} L_\mu(m) .
\]
The functions $w_k$ and $v_k^*$ are relative versions of the $W_k$-\hsk volume and $\Vol_k^*$-\hsk volume, respectively.

\subsubsection*{Cotangent symplectic structures}

Let $M$ be a smooth $n$-\hsk manifold, with cotangent bundle $\mappa{\pi}{T^* M}{M}$. The \emph{Liouville form} $\lambda$ of $T^* M$ is the $1$-\hsk form defined by:
\[
\lambda_{(p,\alpha)}(v) \defin \alpha (\dd{\pi}_{(p,\alpha)}(v))
\] 
for every $(p,\alpha) \in T^* M$ and $v \in T_{(p,\alpha)} T^* M$. The 2-\hsk form $\omega \defin \dd \lambda$ is non-\hsk degenerate and it defines a natural symplectic structure on the total space $T^* M$. 

\vspace{0.5cm}

In the following, $\lambda^\hyp$, $\lambda^\conf$ will denote the Liouville forms of $T^* \Teich_\Sigma^\hyp$, $T^* \Teich_\Sigma^\conf$, respectively, and $\omega^\hyp$, $\omega^\conf$ their associated symplectic forms. As before, $\Th$ stands for the Thurston parametrization, which we briefly recalled in the end of Section~1. The reader can find the necessary notation concerning the geometry of $k$-\hsk surfaces at the beginning of Section~2, and the definitions of the parametrizations $\Phi_k$ and $\Psi_k$ in Sections \ref{subsec:phik} and \ref{subsec:psik}, respectively.  

The first step of our analysis will be to describe the pullback of the Liouville forms $\lambda^\conf$ and $\lambda^\hyp$ by the maps $\Phi_k$ and $\dd L \circ \Th$, $\Psi_k$, respectively. In particular, we have:

\begin{lemma} \label{lem:pullback_liouville_forms}
	The following relations hold:
	\begin{gather}
	(\Phi_k^* \lambda^\conf)_E(\delta E) = \frac{1}{4} \int_{\Sigma_k} \scall{\delta \II_k}{\Re q_k}_{\II_k} \dd{a}_{\II_k} , \label{eq:pullback_liouville_k_param} \\
	((\dd{L} \circ \Th)^* \lambda^\hyp)_E(\delta E) = \dd{(L_\mu)}_m(\delta m), \label{eq:pullback_liouville_thurston} \\
	(\Psi_k^* \lambda^\hyp)_E(\delta E) = - \frac{1}{2} \int_{\Sigma_k} \scall{\delta \I_k}{\II_k - H_k \I_k }_{\I_k} \dd{a}_{\I_k} , \label{eq:pullback_liouville_hyp_k_param}
	\end{gather}
	where $\delta \I_k$ and $\delta \II_k$ represent the variations of the first and second fundamental forms of the $k$-\hsk surface, respectively.
	\begin{proof}
		The Liouville form $\lambda^\conf$ of $T^* \Teich^\conf_\Sigma$ satisfies
		\[
		(\Phi_k^* \lambda^\conf)_E(\delta E) = \lambda^\conf_{(c_k,q_k)}(\dd{(\Phi_k)}_E(\delta E)) = \Re \scal{q_k}{\delta c_k} ,
		\]
		where $\delta c_k$ is the Beltrami differential representing the variation of $c_k$ as we deform the hyperbolic end along the direction $\delta E$. Then relation \eqref{eq:pullback_liouville_k_param} follows from Lemma \ref{lem:expression_pairing}. 
		
		Relation \eqref{eq:pullback_liouville_thurston} has been originally shown by Krasnov and Schlenker in the proof of \cite[Theorem~1.2]{krasnov2009symplectic}. First observe that the $1$-\hsk form $(\dd{L} \circ \Th)^* \lambda^\hyp$ is well defined since the function $\dd L \circ \Th$ is $\mathscr{C}^1$ (see Proposition \ref{prop:limit_psik}). Similarly to what done above, we see that
		\[
		((\dd{L} \circ \Th)^* \lambda^\hyp)_E(\delta E) = \lambda^\hyp_{(m,\dd{(L_\mu)}_m)}(\dd{(\dd{L} \circ \Th)}_E(\delta E)) = \dd{(L_\mu)}_m (\delta m) ,
		\]
		where $\delta m$ denotes the first order variation of the hyperbolic metric of the concave pleated surface $\partial E$ along the direction $\delta E$.
		
		Finally, the Liouville form $\lambda^\hyp$ satisfies
		\[
		(\Psi_k^* \lambda^\hyp)_E(\delta E) = - \frac{\sqrt{k + 1}}{k} \dd{(L_{h_k^*})}_{h_k}(\delta h_k) .
		\]
		Therefore, relation \eqref{eq:pullback_liouville_hyp_k_param} follows from Lemma \ref{lem:variation_hyperbolic_length} by backtracking the multiplicative factors involved in the definitions of all the quantities.
	\end{proof}
\end{lemma}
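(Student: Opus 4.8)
The plan is to exploit the tautological nature of the Liouville form: for a smooth map $f\colon N\to T^*M$, writing $f_0=\pi\circ f$ and $\theta(x)\in T^*_{f_0(x)}M$ for its two components, one has $(f^*\lambda)_x=\theta(x)\circ\dd{(f_0)}_x$. Each of the three identities then reduces to substituting the definition of the relevant parametrization and recognizing a first-order variation formula already at our disposal; the remaining work consists only in reconciling the various multiplicative normalizations.

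For \eqref{eq:pullback_liouville_k_param}: since $\Phi_k(E)=(c_k,q_k)$, the tautological formula gives $(\Phi_k^*\lambda^\conf)_E(\delta E)=q_k(\delta c_k)=\Re\scal{q_k}{\delta c_k}$, where $\delta c_k$ is the Beltrami differential recording the first-order variation of $c_k=[\II_k]$ along $\delta E$. It then remains to rewrite this pairing between a holomorphic quadratic differential and a Beltrami differential as an integral over $\Sigma_k$: choosing $\II_k$ as the representative of $c_k$, and using that $\Re q_k$ is trace-free with respect to $\II_k$ (so the trace part of $\delta\II_k$ contributes nothing), Lemma~\ref{lem:expression_pairing} gives $\Re\scal{q_k}{\delta c_k}=\tfrac14\int_{\Sigma_k}\scall{\delta\II_k}{\Re q_k}_{\II_k}\dd{a}_{\II_k}$. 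For \eqref{eq:pullback_liouville_thurston}: once one knows that $\dd{L}\circ\Th$ is $\mathscr{C}^1$, which is part of Proposition~\ref{prop:limit_psik} and guarantees that the pullback is defined, the identity is immediate: since $(\dd{L}\circ\Th)(E)=(m,\dd{(L_\mu)}_m)$ with $m$ the intrinsic hyperbolic metric of $\partial E$ and $\mu$ its bending lamination, the tautological formula yields $((\dd{L}\circ\Th)^*\lambda^\hyp)_E(\delta E)=\dd{(L_\mu)}_m(\delta m)$, where $\delta m$ is the first-order variation of the pleated-boundary metric; the simultaneous variation of $\mu$ plays no role, since only the value of the covector at the base point enters. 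This is precisely the computation of Krasnov and Schlenker in the proof of \cite[Theorem~1.2]{krasnov2009symplectic}.

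For \eqref{eq:pullback_liouville_hyp_k_param}: the definition of $\Psi_k$ and the tautological formula give $(\Psi_k^*\lambda^\hyp)_E(\delta E)=-\tfrac{\sqrt{k+1}}{k}\,\dd{(L_{h_k^*})}_{h_k}(\delta h_k)$, and I would then feed this into Lemma~\ref{lem:variation_hyperbolic_length}, applied with $h=h_k=(-k)\I_k$, $h^*=h_k^*$, and Labourie operator $b_k=\tfrac{1}{\sqrt{k+1}}B_k$. That $b_k$ is genuinely the operator of Definition~\ref{def:labourie_operator} for the pair $(h_k,h_k^*)$ was already recorded in Section~\ref{subsec:constant_extr_curvature}: rescaling $\I_k$ by the constant $-k$ leaves its Levi-Civita connection unchanged, so the Codazzi identity for $B_k$ passes to $b_k$, while $\det b_k=(k+1)^{-1}\det B_k=1$. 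One then substitutes back --- $\delta h_k=(-k)\delta\I_k$, the tensors $h_k(b_k\cdot,\cdot)$ and $\tr(b_k)\,h_k$ are fixed constant multiples of $\II_k$ and $\I_k$, and passing from $h_k$ to $\I_k$ rescales the inner product on symmetric $2$-tensors by $(-k)^{-2}$ and the area form by $(-k)$ --- and collects all the factors, including the prefactor $-\tfrac{\sqrt{k+1}}{k}$; they cancel to $1$ and leave $-\tfrac12\int_{\Sigma_k}\scall{\delta\I_k}{\II_k-H_k\I_k}_{\I_k}\dd{a}_{\I_k}$.

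The genuine content is imported wholesale from Lemmas~\ref{lem:expression_pairing} and~\ref{lem:variation_hyperbolic_length}, so the only real hazard is the bookkeeping of multiplicative constants in \eqref{eq:pullback_liouville_hyp_k_param}, where the prefactor $-\tfrac{\sqrt{k+1}}{k}$ built into $\Psi_k$, the normalization $\tfrac{1}{\sqrt{k+1}}$ of $b_k$, and the conformal factor $-k$ relating $h_k$ and $\I_k$ all have to cancel exactly. I would guard against slips by cross-checking the outcome against the rephrasing of $\dd{(\Vol_k^*)}$ appearing in the proof of Theorem~\ref{thm:schlafli_Vk*} (passing from the relative to the absolute volume), which gives an independent confirmation of the constants.
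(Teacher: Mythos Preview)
Your proposal is correct and follows essentially the same approach as the paper's own proof: in each case you unwind the tautological definition of the Liouville form and then invoke the same auxiliary result (Lemma~\ref{lem:expression_pairing} for \eqref{eq:pullback_liouville_k_param}, the Krasnov--Schlenker computation and Proposition~\ref{prop:limit_psik} for \eqref{eq:pullback_liouville_thurston}, and Lemma~\ref{lem:variation_hyperbolic_length} for \eqref{eq:pullback_liouville_hyp_k_param}). The only difference is that for \eqref{eq:pullback_liouville_hyp_k_param} you spell out the constant-tracking explicitly, whereas the paper simply says ``by backtracking the multiplicative factors''; your cross-check against the proof of Theorem~\ref{thm:schlafli_Vk*} is a sensible safeguard but not present in the paper.
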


Similarly to what done in the previous Section, we can describe the first order variation of the relative volume functions $w_k$ and $v_k^*$ as follows:

\begin{lemma} \label{lem:variation_relative_volumes}
	The relative volumes $w_k$ and $v_k^*$ satisfy:
	\begin{gather*}
	\dd{(w_k)}_E(\delta E) = \frac{1}{4} \int_{\Sigma_k} \scall*{\delta \II_k}{\III_k - \frac{H_k}{2} \II_k}_{\II_k} \dd{a}_{\I_k} + \frac{1}{2} \dd{(L_\mu)}_m(\delta m), \\
	\dd{(v_k^*)}_E(\delta E) = \frac{1}{4} \int_{\Sigma_k} \scall*{\delta \I_k}{\II_k - H_k \I_k}_{\I_k} \dd{a}_{\I_k} + \frac{1}{2} \dd{(L_\mu)}_m(\delta m).
	\end{gather*}
	\begin{proof}
		Both the relations can be proved by applying the same strategy of \cite[Proposition~4.3]{krasnov2009symplectic}. Let $(g_t)_t$ be a differentiable $1$-\hsk parameter family of hyperbolic metrics on $\Sigma \times (0,\infty)$ so that the first order variation of $E_t = (\Sigma \times (0,\infty), g_t)$ coincides with $\delta E$. For any $t$, we choose an embedded surface $S$ in $\Sigma \times (0,\infty)$ that lies below the $k$-\hsk surface of $E_t$ (i. e. it is contained in the interior of the region $(E_t)_k$) for all small values of $t$. Now we decompose the quantity $w_k(E)$ in two terms:
		\[
		w_k(E) = \left( \Vol(N(S,\Sigma_{t,k})) - \frac{1}{4} \int_{\Sigma_{t,k}} H_{t,k} \dd{a}_{k,t} \right) + \left( \Vol(N(\partial E_t,S)) + \frac{1}{2} L_{\mu_t}(m_t) \right)
		\]
		where $\Sigma_{t,k}$ is the $k$-\hsk surface of $E_t$, and $N(S',S'')$ denotes the region of $E$ bounded by $S'$ from below and $S''$ from above.
		
		Following step by step the proof of Proposition \ref{prop:variation_W_volume}, we see that the variation of the first term equals
		\[
		\frac{1}{4} \int_{\Sigma_k} \scall*{\delta \II_k}{\III_k - \frac{H_k}{2} \II_k}_{\II_k} \dd{a}_{\I_k} + \frac{1}{2} \int_S \left( \delta H + \frac{1}{2} \scall{\delta \I}{\II} \right) \dd{a} , 
		\]
		where the mean curvature $H$ and the second fundamental form $\II$ of $S$ are defined with respect to the normal vector field of $S$ pointing \emph{towards} the concave boundary $\partial E$.
		
		The variation formula of the right term can be computed with the exact same argument of \cite{mazzoli2018the_dual}, the only difference is that we are looking at a region bounded by a smooth surface and a locally \emph{concave} pleated surface, while in \cite{mazzoli2018the_dual} we were considering the convex core, which is a region bounded by convex pleated surfaces. This leads to the following variation:
		\[
		\frac{1}{2} \dd{(L_\mu)}_m(\delta m) + \frac{1}{2} \int_S \left( \delta (- H) + \frac{1}{2} \scall{\delta \I}{- \II} \right) \dd{a} .
		\]
		The signs multiplying $H$ and $\II$ are due to the fact that we need to consider the mean curvature and the second fundamental form defined with the normal vector field pointing \emph{outside} of $N(\partial E, S)$, which is the opposite of the one considered above. In particular, when we look at the sum of the two terms, the integrals over $S$ simplify, and we are left with the first relation of our statement.
		
		The second relation follows by an analogous argument, replacing the use of Proposition \ref{prop:variation_W_volume} with Proposition \ref{prop:variation_dual_volume}.
	\end{proof}
\end{lemma}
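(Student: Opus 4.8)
The plan is to follow the strategy of \citet[Proposition~4.3]{krasnov2009symplectic}: cut the region $E_k$ lying between the locally concave pleated boundary $\partial E$ and the $k$-\hsk surface $\Sigma_k$ along an auxiliary embedded smooth surface $S$, compute the first order variation of each of the two pieces using a variation formula already available (Proposition \ref{prop:variation_W_volume} on the $W$-\hsk side, the dual computation of \cite{mazzoli2018the_dual} on the side adjacent to $\partial E$), and check that the two contributions coming from $S$ cancel.

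In detail, I would fix a differentiable $1$-\hsk parameter family $(g_t)_t$ of hyperbolic metrics on $\Sigma \times (0,\infty)$ whose first order variation is $\delta E$, and choose an embedded surface $S$ that stays strictly below the $k$-\hsk surface $\Sigma_{t,k}$ of $E_t = (\Sigma \times (0,\infty), g_t)$ for all small $t$ (possible by continuity of the $k$-\hsk surface foliation in the hyperbolic structure, cf. Theorem \ref{thm:existence_k_foliation}). Writing $N(S',S'')$ for the region bounded below by $S'$ and above by $S''$, I decompose
\[
w_k(E) = \left( \Vol(N(S,\Sigma_k)) - \frac{1}{4} \int_{\Sigma_k} H_k \dd{a}_k \right) + \left( \Vol(N(\partial E, S)) + \frac{1}{2} L_\mu(m) \right).
\]
The first summand is a $W$-\hsk volume whose smooth outer boundary $\Sigma_k$ is a $k$-\hsk surface, so $\delta K_e$ vanishes there; by Proposition \ref{prop:variation_W_volume} its variation equals $\frac{1}{4}\int_{\Sigma_k}\scall*{\delta \II_k}{\III_k - \frac{H_k}{2}\II_k}_{\II_k}\dd{a}_{\I_k}$ plus a term $\frac{1}{2}\int_S\left(\delta H + \frac{1}{2}\scall{\delta \I}{\II}\right)\dd{a}$ on $S$, where $H$ and $\II$ are taken with respect to the unit normal of $S$ pointing towards $\partial E$. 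For the second summand I run the argument of \cite{mazzoli2018the_dual} unchanged — the only difference being that the pleated surface $\partial E$ is locally \emph{concave} rather than convex, which does not affect the computation — and obtain $\frac{1}{2}\dd{(L_\mu)}_m(\delta m)$ plus the term $\frac{1}{2}\int_S\left(\delta(-H) + \frac{1}{2}\scall{\delta \I}{-\II}\right)\dd{a}$, now computed with the opposite coorientation of $S$. Adding the two, the $S$-\hsk integrals are exactly opposite and cancel, which gives the formula for $w_k$; the formula for $v_k^*$ follows by the same argument with Proposition \ref{prop:variation_dual_volume} in place of Proposition \ref{prop:variation_W_volume}.

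The delicate points are the sign bookkeeping and the cancellation: one must check that the boundary terms on $S$ produced by the two pieces are written relative to opposite coorientations of $S$ and are genuinely negatives of one another (not merely of the same shape), so that their sum vanishes; this is what makes the auxiliary surface $S$ irrelevant and converts the two "relative" decompositions into the two clean identities in the statement. A secondary point is to justify that $S$ can be chosen once and for all below $\Sigma_{t,k}$ for all nearby $t$, and that $N(\partial E, S)$ admits the dual-volume variation formula of \cite{mazzoli2018the_dual} when one of its boundary components is a concave (rather than convex) pleated surface. I expect the cancellation of the $S$-\hsk terms to be the crux of the proof.
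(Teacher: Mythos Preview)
Your proposal is correct and follows essentially the same approach as the paper's proof: the same auxiliary surface $S$, the same decomposition of $w_k$ into a $W$-\hsk volume piece above $S$ and a dual-\hsk volume piece below $S$, the same appeal to Proposition \ref{prop:variation_W_volume} and to the argument of \cite{mazzoli2018the_dual}, and the same cancellation of the $S$-\hsk boundary terms coming from the opposite coorientations. The only cosmetic difference is that you make the vanishing of $\delta K_e$ on $\Sigma_k$ explicit, while the paper absorbs this into ``following step by step the proof of Proposition \ref{prop:variation_W_volume}''.
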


\begin{lemma} \label{lem:liouville_forms}
	For every $k \in (-1.0)$, we have
	\begin{gather*}
	\dd{w_k} = - \Phi_k^* \lambda^\conf + \frac{1}{2} (\dd{L} \circ \Th)^* \lambda^\hyp , \\
	\dd{v_k^*} = - \frac{1}{2} \Psi_k^* \lambda^\hyp + \frac{1}{2} (\dd{L} \circ \Th)^* \lambda^\hyp .
	\end{gather*}
	\begin{proof}
		The statement is a direct consequence of Lemma \ref{lem:pullback_liouville_forms} and Lemma \ref{lem:variation_relative_volumes}.
	\end{proof}
\end{lemma}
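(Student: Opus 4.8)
\textbf{Proof proposal for Lemma \ref{lem:liouville_forms}.}

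The plan is to compare term by term the two expressions for $\dd w_k$ and $\dd v_k^*$ obtained in Lemma \ref{lem:variation_relative_volumes} with the pullbacks of the Liouville forms computed in Lemma \ref{lem:pullback_liouville_forms}. There is essentially no geometry left to do: everything reduces to bookkeeping of multiplicative constants, and the identities \eqref{eq:hopf_diff_I_and_III} and \eqref{eq:area_forms} do the rest.

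First I would treat $w_k$. By Lemma \ref{lem:variation_relative_volumes}, its differential splits as the sum of $\frac{1}{4}\int_{\Sigma_k}\scall*{\delta \II_k}{\III_k - \frac{H_k}{2}\II_k}_{\II_k}\dd{a}_{\I_k}$ and $\frac{1}{2}\dd{(L_\mu)}_m(\delta m)$. For the first summand, I would use the second equality in \eqref{eq:hopf_diff_I_and_III}, namely $\III_k - \frac{H_k}{2}\II_k = -\sqrt{k+1}\,\Re q_k$, together with $\dd{a}_{\I_k} = \frac{1}{\sqrt{k+1}}\dd{a}_{\II_k}$ from \eqref{eq:area_forms}; the two factors of $\sqrt{k+1}$ cancel and the term becomes $-\frac{1}{4}\int_{\Sigma_k}\scall{\delta\II_k}{\Re q_k}_{\II_k}\dd{a}_{\II_k}$, which by \eqref{eq:pullback_liouville_k_param} is exactly $-(\Phi_k^*\lambda^\conf)_E(\delta E)$. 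For the second summand, \eqref{eq:pullback_liouville_thurston} identifies $\dd{(L_\mu)}_m(\delta m)$ with $((\dd L\circ\Th)^*\lambda^\hyp)_E(\delta E)$. Adding the two pieces gives the first asserted identity. (Here one should note that $\dd L\circ\Th$ is only $\mathscr{C}^1$, by Proposition \ref{prop:limit_psik}, so $(\dd L\circ\Th)^*\lambda^\hyp$ is a continuous $1$-form, but this is enough to make sense of the equality of $1$-forms.)

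For $v_k^*$ I would argue in the same way. Lemma \ref{lem:variation_relative_volumes} writes $\dd{(v_k^*)}_E(\delta E)$ as $\frac{1}{4}\int_{\Sigma_k}\scall*{\delta \I_k}{\II_k - H_k\I_k}_{\I_k}\dd{a}_{\I_k}$ plus the same boundary term $\frac{1}{2}\dd{(L_\mu)}_m(\delta m)$. The first summand is, up to the factor $-\tfrac12$, precisely the right-hand side of \eqref{eq:pullback_liouville_hyp_k_param}, so it equals $-\frac{1}{2}(\Psi_k^*\lambda^\hyp)_E(\delta E)$; the second summand is again $\frac{1}{2}((\dd L\circ\Th)^*\lambda^\hyp)_E(\delta E)$ by \eqref{eq:pullback_liouville_thurston}. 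Summing yields the second identity, and since $\delta E$ was arbitrary the equalities hold as $1$-forms on $\Ends(\Sigma)$.

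There is no real obstacle here; the only point requiring minimal care is keeping track of the constants $-\frac{k}{2(k+1)}$-type factors hidden in the definitions of $h_k$, $h_k^*$, $q_k$ and in the normalization of $\Psi_k$, but these have already been dealt with inside Lemmas \ref{lem:pullback_liouville_forms} and \ref{lem:variation_relative_volumes}, so the present lemma is a pure matching of formulas.
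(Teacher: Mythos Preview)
Your proposal is correct and follows exactly the approach the paper intends: the paper's own proof is the single sentence ``direct consequence of Lemma \ref{lem:pullback_liouville_forms} and Lemma \ref{lem:variation_relative_volumes}'', and you have simply spelled out the term-by-term matching (using \eqref{eq:hopf_diff_I_and_III} and \eqref{eq:area_forms} for the $w_k$ case, which is precisely the computation already done in the proof of Lemma \ref{lem:differential_Wk_volume}). There is nothing to add or correct.
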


Taking the differential of the identities in Lemma \ref{lem:liouville_forms}, and remembering that $\dd^2 = 0$, we immediately conclude the following:

\begin{theorem} \label{thm:symplectic_maps}
	For every $k \in (-1.0)$, the maps 
	\begin{align*}
	\Phi_k \circ (\dd{L} \circ \Th)^{- 1} & \vcentcolon (T^* \Teich^\hyp_\Sigma,\omega^\hyp) \longrightarrow (T^* \Teich^\conf_\Sigma, 2 \omega^\conf) , \\
	\Psi_k \circ (\dd{L} \circ \Th)^{- 1} & \vcentcolon (T^* \Teich^\hyp_\Sigma,\omega^\hyp) \longrightarrow (T^* \Teich^\hyp_\Sigma, \omega^\hyp) 
	\end{align*}
	are symplectomorphisms.
\end{theorem}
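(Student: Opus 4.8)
The plan is to apply the exterior derivative to the two identities of Lemma~\ref{lem:liouville_forms}, using that $\dd^2=0$, that exterior differentiation commutes with pullback, and that $\omega^\conf=\dd\lambda^\conf$ and $\omega^\hyp=\dd\lambda^\hyp$ by the very definition of the cotangent symplectic structures. This reduces the theorem to a one-line computation.

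Concretely, I would rewrite the first relation of Lemma~\ref{lem:liouville_forms} as $\tfrac12(\dd L\circ\Th)^*\lambda^\hyp=\dd w_k+\Phi_k^*\lambda^\conf$ and differentiate: the term $\dd w_k$ dies and one obtains
\[
(\dd L\circ\Th)^*\omega^\hyp=\Phi_k^*(2\omega^\conf).
\]
Since $\Phi_k$ is a diffeomorphism (Theorem~\ref{thm:labourie_parametrization}) and $\dd L\circ\Th$ is a $\mathscr{C}^1$ diffeomorphism (Proposition~\ref{prop:limit_psik}), precomposing this identity with $(\dd L\circ\Th)^{-1}$ and using $(f\circ g)^*=g^*\circ f^*$ gives $\bigl(\Phi_k\circ(\dd L\circ\Th)^{-1}\bigr)^*(2\omega^\conf)=\omega^\hyp$, which is exactly the assertion that $\Phi_k\circ(\dd L\circ\Th)^{-1}$ is a symplectomorphism onto $(T^*\Teich^\conf_\Sigma,2\omega^\conf)$. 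The identical argument applied to the second relation of Lemma~\ref{lem:liouville_forms} yields $\Psi_k^*\omega^\hyp=(\dd L\circ\Th)^*\omega^\hyp$, and hence, since $\Psi_k$ is a $\mathscr{C}^1$ diffeomorphism (Proposition~\ref{prop:limit_psik}), $\bigl(\Psi_k\circ(\dd L\circ\Th)^{-1}\bigr)^*\omega^\hyp=\omega^\hyp$. Note that closedness and nondegeneracy of the resulting forms need not be checked separately, as they are inherited from $\omega^\conf$, $\omega^\hyp$ via the diffeomorphisms involved.

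I do not expect a genuine obstacle once Lemma~\ref{lem:liouville_forms} is in hand; the only point requiring a word of care is regularity. A priori $\dd L\circ\Th$ and $\Psi_k$ are only of class $\mathscr{C}^1$, so a priori $(\dd L\circ\Th)^*\lambda^\hyp$ and $\Psi_k^*\lambda^\hyp$ are merely continuous forms that one may not differentiate freely. This is circumvented by observing that Lemma~\ref{lem:liouville_forms} already expresses them as $(\dd L\circ\Th)^*\lambda^\hyp=2\,\dd w_k+2\,\Phi_k^*\lambda^\conf$ and $\Psi_k^*\lambda^\hyp=(\dd L\circ\Th)^*\lambda^\hyp-2\,\dd v_k^*$, i.e. as differences of smooth forms (here $w_k$, $v_k^*$ are smooth and $\Phi_k$ is smooth); hence these pullback forms are in fact smooth and the exterior derivative may be taken without qualms. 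Everything else in the proof is formal.
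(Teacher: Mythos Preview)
Your proof is correct and follows exactly the paper's approach: differentiate the identities of Lemma~\ref{lem:liouville_forms} and use $\dd^2=0$. Your regularity discussion is a reasonable addition not present in the paper, though note that $\Psi_k$ is in fact smooth (Proposition~\ref{prop:limit_psik} records the smoothness of $\hat{\Psi}_k$, hence of $\Psi_k$), so only $\dd L\circ\Th$ genuinely requires the care you describe.
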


Observe that Theorem \ref{thm:symplectic_maps} is a direct consequence of what we just observed.

\begin{remark}
	Theorem \ref{thm:symplectic_maps}, combined with Corollary \ref{cor:convergence_phik}, implies that the map
	\[
	\Sch \circ (\dd{L} \circ \Th)^{-1} \vcentcolon (T^* \Teich^\hyp_\Sigma,\omega^\hyp) \longrightarrow (T^* \Teich^\conf_\Sigma, 2 \omega^\conf) 
	\]
	is a symplectomorphism, which has been originally shown in \cite[Theorem~1.2]{krasnov2009symplectic}.
	In addition, \cite[Theorem~1.2]{krasnov2009symplectic} and Theorem \ref{thm:symplectic_k} imply also \cite[Theorem~1.11]{bonsante2015a_cyclic}, which states that the function
	\[
	\Sch \circ \Psi_k^{-1} \vcentcolon (T^* \Teich^\hyp_\Sigma,\omega^\hyp) \longrightarrow (T^* \Teich^\conf_\Sigma, 2 \omega^\conf) 
	\]
	is a symplectomorphism for every $k \in (-1,0)$. Finally, by applying Theorem \ref{thm:symplectic_k} to the case $k = k'$, and taking care of the multiplicative factors involved in the definitions of $\Phi_k$ and $\Psi_k$, we deduce that the function 
	\[
	\begin{matrix}
	\hat{\mathcal{H}} \vcentcolon & (T^* \Teich_\Sigma^\conf, \omega^\conf) & \longrightarrow & (T^* \Teich_\Sigma^\hyp, \omega^\hyp) \\
	& (c,q) & \longrightarrow & (h(c,q), \dd{(L_{h(c,-q)})})
	\end{matrix}
	\]
	is a symplectomorphism, where $h(c, \pm q) = \varphi_c^{-1}(\pm q)$ is the hyperbolic metric of $\Sigma$ for which the identity map $(\Sigma,c) \rightarrow (\Sigma, h(c,\pm q))$ is harmonic with Hopf differential equal to $\pm q$  (see Theorem \ref{thm:wolf_param}). 
\end{remark}


\section{Kleinian reciprocities} \label{sec:kleinian_reciprocities}

Let $M$ be a Kleinian manifold and let $\mathscr{G}(M)$ denote the space of convex co-\hsk compact hyperbolic structures of $M$. Any isotopy class of hyperbolic metrics $[g] \in \mathscr{G}(M)$ has a collection of $k$-\hsk surfaces, each one sitting inside a hyperbolic end $E_i$ of $(M,g)$. In this way, we can define a function 
\[
\phi_k \vcentcolon \mathscr{G}(M) \longrightarrow T^* \Teich^\conf_{\partial M} ,
\]
which associates to any class $[g]$ the data $(\Phi_k(E_i))_i$ of its $k$-\hsk surfaces. Similarly, we define the function $\mappa{\psi_k}{\mathscr{G}(M)}{T^* \Teich^\hyp_{\partial M}}$, sending $[g]$ into the data $(\Psi_k(E_i))_i$.

\begin{theoremx}
	For every $k \in (-1,0)$, the image $\phi_k(\mathscr{G}(M))$ (resp. $\psi_k(\mathscr{G}(M))$) is a Lagrangian submanifold of $T^* \Teich_\Sigma^\conf$ (resp. $T^* \Teich_\Sigma^\hyp$).
	\begin{proof}
		The statement is a consequence of Lemma \ref{lem:liouville_forms} and of the variation formula of the dual volume of a convex co-\hsk compact hyperbolic manifold. To see this, first we apply Lemma \ref{lem:liouville_forms} to each end of $M$:
		\[
		\dd{w_{k,i}} = - \Phi_{k,i}^* \lambda^\conf_i + \frac{1}{2} (\dd{L_i} \circ \Th_i)^* \lambda^\hyp_i .
		\]
		By the dual Bonahon-\hsk Schl\"afli formula (Theorem \ref{thm:dual_bonahon_schlafli}), we have that
		\[
		\dd{\Vol^*} (\delta E) = - \frac{1}{2} \sum_i \dd(L_{\mu_i})(\delta m_i) = - \frac{1}{2} \sum_i ((\dd{L_i} \circ \Th_i)^* \lambda^\hyp_i) (\delta E) .
		\]
		Therefore we deduce that
		\[
		\dd\left( \sum_i w_{k,i} + \Vol^* \right) = \sum_i \dd{w_{k,i}} + \dd{\Vol^*} = - \phi_k^* \left( \sum_i \lambda^\conf_i \right) = - \phi_k^* \lambda^\conf .
		\]
		The function $\sum_i w_{k,i} + \Vol^*$ turns out to be equal to the $W$-\hsk volume of $M_k$, the portion of $M$ contained in the union of the $k$-\hsk surfaces of the ends $(E_i)_i$. Indeed:
		\begin{align*}
		\sum_i w_{k,i}(E_i) + \Vol^*(M) & = \sum_i \left( \Vol(E_{k,i}) - \frac{1}{4} \int_{\Sigma_{k,i}} H_{k,i} \dd{a}_{k,i} + \frac{1}{2} L_{\mu_i}(m_i) \right) + \Vol(\CC M) - \frac{1}{2} L_\mu(m) \\
		& = \Vol(\CC M) + \sum_i \Vol(E_{k,i}) - \frac{1}{4} \sum_i \int_{\Sigma_{k,i}} H_{k,i} \dd{a}_{k,i} + \frac{1}{2} L_\mu(m) - \frac{1}{2} L_\mu(m) \\
		& = \Vol(M_k) - \frac{1}{4} \int_{\partial M_k} H_k \dd{a}_k \\
		& = W_k(M) .
		\end{align*}
		Therefore we have proved that $\dd{W_k} = - \phi_k^* \lambda^\conf$. Taking the differential of this identity we obtain that $\phi_k^* \omega^\conf = 0$. This implies the statement, since $\phi_k$ is an embedding and $2 \dim \mathscr{G}(M) = \dim T^* \Teich^\conf_{\partial M}$.
		
		In an analogous manner we can prove that $\psi_k^* \lambda^\hyp = - 2 \dd{\Vol_k^*}$. To see this, it is enough to replace the role of the relative $W$-\hsk volumes $w_{k,i}$ with the dual volumes $v_{k,i}^*$ and then proceed in the exact same way. Again, by taking the differential of the identity $\psi_k^* \lambda^\hyp = - 2 \dd{\Vol_k^*}$, we obtain the second part of the statement.
	\end{proof}
\end{theoremx}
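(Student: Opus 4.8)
The plan is to reduce the statement to the Schläfli-type variation formulas already at our disposal, via the relative volumes $w_k$ and $v_k^*$ of Section~4. The guiding principle, borrowed from \cite{krasnov2009symplectic}, is that if a map into a cotangent bundle pulls back the Liouville form to an exact $1$-form, then the image is isotropic; combined with a dimension count, this forces it to be Lagrangian. So the whole argument amounts to producing a globally defined function on $\mathscr{G}(M)$ whose differential is (up to sign and a constant) $\phi_k^*\lambda^\conf$, and similarly for $\psi_k$.

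First I would apply Lemma~\ref{lem:liouville_forms} \emph{end by end}: each hyperbolic end $E_i$ of $M$ contributes $\dd(w_{k,i}) = -\Phi_{k,i}^*\lambda_i^\conf + \tfrac{1}{2}(\dd L_i\circ\Th_i)^*\lambda_i^\hyp$. Summing over $i$ produces the term $\sum_i \Phi_{k,i}^*\lambda_i^\conf = \phi_k^*\lambda^\conf$ on the nose, but leaves the unwanted $\tfrac{1}{2}\sum_i(\dd L_i\circ\Th_i)^*\lambda_i^\hyp$. The key observation is that this leftover is exactly $-\dd\Vol_\CC^*$, by the dual Bonahon--Schläfli formula (Theorem~\ref{thm:dual_bonahon_schlafli}), since $\dd\Vol_\CC^*(\delta M) = -\tfrac12\sum_i\dd(L_{\mu_i})(\delta m_i)$ and each summand equals $(\dd L_i\circ\Th_i)^*\lambda_i^\hyp$ evaluated on $\delta M$ by formula~\eqref{eq:pullback_liouville_thurston}. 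Therefore $\dd\bigl(\sum_i w_{k,i} + \Vol_\CC^*\bigr) = -\phi_k^*\lambda^\conf$. The next step is to identify the left-hand function with something intrinsic to $M$: writing out the three pieces of each $w_{k,i}$ (the volume of $E_{k,i}$, the integral of $H_{k,i}$, and $\tfrac12 L_{\mu_i}(m_i)$) and adding $\Vol(\CC M) - \tfrac12 L_\mu(m)$, the two length terms cancel pairwise against the bending contributions of the convex core, the volume pieces assemble into $\Vol(M_k)$, and the mean-curvature pieces assemble into $\tfrac14\int_{\partial M_k}H_k\,\dd a_k$. Hence $\sum_i w_{k,i} + \Vol_\CC^* = W_k(M)$, so $\dd W_k = -\phi_k^*\lambda^\conf$ globally on $\mathscr{G}(M)$.

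From $\dd W_k = -\phi_k^*\lambda^\conf$ one takes the exterior derivative: $\phi_k^*\omega^\conf = \phi_k^*\dd\lambda^\conf = -\dd\dd W_k = 0$, so $\phi_k(\mathscr{G}(M))$ is isotropic. Since $\phi_k$ is an embedding (it is a restriction of the diffeomorphism $\Phi_k$ on each end-factor, hence injective with injective differential) and $2\dim\mathscr{G}(M) = \dim T^*\Teich^\conf_{\partial M}$ — because $\mathscr{G}(M)$ has half the dimension of its "doubled" deformation space of boundary structures — isotropic of half dimension means Lagrangian. The $\psi_k$ case runs verbatim with $v_{k,i}^*$ in place of $w_{k,i}$: Lemma~\ref{lem:liouville_forms} gives $\dd v_{k,i}^* = -\tfrac12\Psi_{k,i}^*\lambda_i^\hyp + \tfrac12(\dd L_i\circ\Th_i)^*\lambda_i^\hyp$, the same cancellation against $\dd\Vol_\CC^*$ occurs, the assembled function is $\Vol_k^*(M)$, and one gets $\psi_k^*\lambda^\hyp = -2\,\dd\Vol_k^*$, hence $\psi_k^*\omega^\hyp = 0$ and Lagrangianity by the identical dimension count.

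The only genuinely delicate point is the bookkeeping in the "assembly" step — checking that, when the relative volumes of the ends are glued to the dual volume of the convex core, the boundary length terms $L_{\mu_i}(m_i)$ and the convex-core bending term $L_\mu(m)$ cancel exactly (here one uses that the bending lamination of the convex core is the disjoint union of the bending laminations of the two sides, so $L_\mu(m) = \sum_i L_{\mu_i}(m_i)$) and that the volumes add without overlap. This is the same computation displayed in the proof and is routine once one is careful about orientations of normal vector fields, which is why the cancellation of the $\int_S(\cdots)\,\dd a$ terms in Lemma~\ref{lem:variation_relative_volumes} was arranged in advance. Everything else is formal: the passage from "pulls back Liouville to an exact form" to "Lagrangian" is standard symplectic linear algebra plus a dimension count.
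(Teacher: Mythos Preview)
Your proposal is correct and follows essentially the same route as the paper's proof: apply Lemma~\ref{lem:liouville_forms} end by end, use the dual Bonahon--Schl\"afli formula to absorb the $(\dd L_i\circ\Th_i)^*\lambda_i^\hyp$ terms into $\dd\Vol_\CC^*$, identify the resulting primitive as $W_k(M)$ (respectively $\Vol_k^*(M)$), and conclude by exactness plus the half-dimension count. The additional remarks you make on why $\phi_k$ is an embedding and on the cancellation $L_\mu(m)=\sum_i L_{\mu_i}(m_i)$ are welcome clarifications but do not change the argument.
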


Theorem \ref{thm:reciprocities} is a generalization of Krasnov and Schlenker's reformulation of McMullen's Kleinian reciprocity Theorem \cite[Theorem~1.5]{krasnov2009symplectic}, and their result can be recovered by taking the limit of the identity $\phi_k^* \omega^\conf = 0$ and applying Corollary \ref{cor:convergence_phik} to each hyperbolic end of $M$. Moreover, \citet[Theorem~1.4]{krasnov2009symplectic} proved that the image of the function $\dd L \circ \Th$ is Lagrangian inside $(T^* \Teich_\Sigma^\hyp, \omega^\hyp)$. Since the map $\dd L \circ \Th$ is the limit of the $\psi_k$'s, the part of the statement concerning the maps $\psi_k$ can be similarly seen as an extension of Krasnov and Schlenker's original result.

\subsection{Quasi-\hsk Fuchsian reciprocities} \label{subsec:quasi_fuchsian_reciprocity}

In this section we present a generalization of McMullen's quasi-\hsk Fuchsian reciprocity Theorem in its original formulation from \cite{mcmullen1998complex}. First we will recall McMullen's original statement, and then we will see how to formulate Theorem \ref{thm:reciprocities} is a similar manner. We define the Bers' embeddings to be the maps:
\begin{align*}
\begin{matrix}
\beta_X \vcentcolon & \Teich_{\overline{\Sigma}} & \longrightarrow & T^*_X \Teich_\Sigma \\
& Y & \longmapsto & \Sch(Q(X,Y))^+
\end{matrix}
& &
\begin{matrix}
\beta_Y \vcentcolon & \Teich_\Sigma & \longrightarrow & T^*_Y \Teich_{\overline{\Sigma}} \\
& Y & \longmapsto & \Sch(Q(X,Y))^-
\end{matrix}
\end{align*}
where $Q(X,Y)$ denotes the unique quasi-\hsk Fuchsian manifold with conformal classes at infinity $(X,Y)$, and $\Sch(Q(X,Y))^\pm$ are the Schwarzian differentials at infinity on the upper and lower boundaries at infinity. McMullen's original formulation of the quasi-\hsk Fuchsian reciprocity Theorem is the following:

\begin{theorem}[{\cite[Theorem~1.6]{mcmullen1998complex}}]
	Given $(X,Y) \in \Teich_\Sigma \times \Teich_{\overline{\Sigma}}$, the differentials of the Bers' embeddings
	\[
	\dd{(\beta_X)_Y} \vcentcolon T_Y \Teich_{\overline{\Sigma}} \longrightarrow T^*_X \Teich_\Sigma , \qquad \dd{(\beta_Y)_X} \vcentcolon T_X \Teich_\Sigma \longrightarrow T^*_Y \Teich_{\overline{\Sigma}}
	\]
	are adjoint linear operators. In other words, $\dd{(\beta_X)_Y} = \dd{(\beta_Y){}^*_X}$.
\end{theorem}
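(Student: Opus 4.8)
The plan is to deduce McMullen's statement as a formal consequence of the Lagrangian property of Theorem~\ref{thm:reciprocities}, specialised to the trivial interval bundle $M = \Sigma \times \R$ (for which $\mathscr{G}(M) = \QF(\Sigma)$ and $\Teich_{\partial M}^\conf = \Teich_\Sigma^\conf \times \Teich_{\overline{\Sigma}}^\conf$). First I would set up coordinates: by Bers' simultaneous uniformization, $(X,Y) \mapsto Q(X,Y)$ is a diffeomorphism $\Teich_\Sigma^\conf \times \Teich_{\overline{\Sigma}}^\conf \to \QF(\Sigma)$ whose two conformal structures at infinity are $X$ and $Y$. Comparing this with the definitions of $\Sch$ and of the Bers' embeddings, the composition $\Sch \circ Q$ is exactly the section
\[
s(X,Y) \;=\; \bigl((X,\beta_X(Y)),\,(Y,\beta_Y(X))\bigr)
\]
of the product cotangent bundle over $N \defin \Teich_\Sigma^\conf \times \Teich_{\overline{\Sigma}}^\conf$, i.e.\ a $1$-form $\theta$ on $N$ with $\theta_{(X,Y)}(\delta X,\delta Y) = \scal{\beta_X(Y)}{\delta X} + \scal{\beta_Y(X)}{\delta Y}$. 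By the tautological property of the Liouville form, $s^*\lambda^\conf = \theta$, hence $s^*\omega^\conf = \dd{\theta}$.

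Next I would invoke the Lagrangian statement. Applying Theorem~\ref{thm:reciprocities} to $M = \Sigma \times \R$ and letting $k \to 0^-$, Corollary~\ref{cor:convergence_phik} gives $\mathscr{C}^1$-uniform convergence of the $\phi_k$ to $\Sch$ on compacta, so the identities $\phi_k^*\omega^\conf = 0$ established in the proof of Theorem~\ref{thm:reciprocities} pass to $\Sch^*\omega^\conf = 0$ on $\QF(\Sigma)$; equivalently, $\Sch(\QF(\Sigma))$ is Lagrangian (this is precisely the Krasnov--Schlenker special case recalled just after Theorem~\ref{thm:reciprocities}). Through the identification of the previous paragraph this reads $\dd{\theta} = s^*\omega^\conf = 0$, i.e.\ the $1$-form $\theta$ is closed.

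Finally I would extract adjointness from closedness. Evaluating $\dd{\theta} = 0$ on coordinate vector fields extending $(\delta X,0)$ and $(0,\delta Y)$, and using that the only mixed contribution to $\theta$ is linear in the fibre directions, one obtains, for all $\delta X \in T_X\Teich_\Sigma^\conf$ and $\delta Y \in T_Y\Teich_{\overline{\Sigma}}^\conf$,
\[
\scal{\dd{(\beta_Y)_X}(\delta X)}{\delta Y} \;=\; \scal{\dd{(\beta_X)_Y}(\delta Y)}{\delta X},
\]
which is exactly the assertion $\dd{(\beta_X)_Y} = \dd{(\beta_Y){}^*_X}$. (The remaining, purely horizontal, components of $\dd{\theta} = 0$ say that for fixed $Y$ the map $X \mapsto \beta_X(Y)$ is a closed $1$-form on $\Teich_\Sigma^\conf$ — the content of the Schl\"afli formula of the renormalized volume — but these are not needed for McMullen's formulation.)

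I expect no genuine analytic difficulty: at this stage McMullen's reciprocity is a formal corollary of the Lagrangian statement, and the only care required is with conventions. Specifically, one should verify that the symplectic form on $T^*\Teich_{\partial M}^\conf$ appearing in Theorem~\ref{thm:reciprocities} is the sum of the Liouville symplectic forms of the two boundary factors, with $\overline{\Sigma}$ carrying its standard orientation, so that the equivalence ``graph of $\theta$ Lagrangian $\iff \dd{\theta} = 0$'' applies verbatim and the sign in the adjointness relation matches McMullen's; and that the base point of $\Sch(Q(X,Y))$ is genuinely the pair $(X,Y)$, so that $\Sch \circ Q$ is honestly a section of the product cotangent bundle rather than merely a Lagrangian immersion. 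Once this bookkeeping is settled, the argument above closes.
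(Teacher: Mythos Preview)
Your proposal is correct and follows essentially the same route as the paper. The paper does not give a self-contained proof of McMullen's theorem (it is cited from \cite{mcmullen1998complex}), but in proving the $k$-surface analogues (Theorems~\ref{thm:quasi_fuch_recipr_conf} and~\ref{thm:quasi_fuch_recipr_hyp}) it records the general identity
\[
\scal{\dd{(F^-_Y)}_X(u)}{v} - \scal{\dd{(F^+_X)}_Y(v)}{u} = (F^* \omega)_{(X,Y)}\bigl((u,0),(0,v)\bigr)
\]
for any section $F$ of $T^*(N^+\times N^-)$, and remarks that the case $F=\Sch$ recovers McMullen's statement once $\Sch^*\omega^\conf=0$ is known; this is exactly your ``adjointness from closedness of $\theta$'' computation, just phrased pointwise rather than as $\dd\theta=0$. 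Your limiting argument $\phi_k^*\omega^\conf=0 \to \Sch^*\omega^\conf=0$ via Corollary~\ref{cor:convergence_phik} is also the one the paper indicates immediately after Theorem~\ref{thm:reciprocities}.
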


We want to analogous statements in the case in which $\Sch$ is replaced by $\phi_k$ or $\psi_k$. For every $k \in (-1,0)$, let $B_k^\conf$ and $B_k^\hyp$ be the maps
\begin{align*}
\begin{matrix}
B_k^\conf \vcentcolon & \QF_\Sigma & \longrightarrow & \Teich_\Sigma^\conf \times \Teich_{\overline{\Sigma}}^\conf \\
& M & \longmapsto & (c_k^+, c_k^-)
\end{matrix}
& &
\begin{matrix}
B_k^\hyp \vcentcolon & \QF_\Sigma & \longrightarrow & \Teich_\Sigma^\hyp \times \Teich_{\overline{\Sigma}}^\hyp \\
& M & \longmapsto & (h_k^+, h_k^-)
\end{matrix}
\end{align*}
where $c_k^\pm$ are the conformal classes of the second fundamental forms of the upper and lower $k$-\hsk surface of $M$, respectively, and $h_k^\pm$ are the hyperbolic metrics $(-k) \I_k^\pm$ of the upper and lower $k$-\hsk surface of $M$, respectively.

A consequence of Labourie and Schlenker's works \cite{labourie1991probleme}, \cite{schlenker2006hyperbolic} (see also \cite[Theorem~4.2]{mazzoli2019dual_volume_WP} for details) is that the function $B_k^\hyp$ is a diffeomorphism for every $k \in (-1,0)$. We do not know if the same is true for $B_k^\conf$, we will assume this to be true for the rest of this section. In analogy to Bers' embeddings, we define the following maps:

\begin{align*}
\begin{matrix}
\beta_{k,X}^\conf \vcentcolon & \Teich_{\overline{\Sigma}}^\conf & \longrightarrow & T^*_X \Teich_\Sigma^\conf \\
& Y & \longmapsto & \phi_k^+ \circ (B_k^\conf)^{-1} (X,Y)
\end{matrix}
& &
\begin{matrix}
\beta_{k,Y}^\conf \vcentcolon & \Teich_\Sigma^\conf & \longrightarrow & T^*_Y \Teich_{\overline{\Sigma}}^\conf \\
& Y & \longmapsto & \phi_k^- \circ (B_k^\conf)^{-1}(X,Y)
\end{matrix} \\
\begin{matrix}
\beta_{k,X}^\hyp \vcentcolon & \Teich_{\overline{\Sigma}}^\hyp & \longrightarrow & T^*_X \Teich_\Sigma^\hyp \\
& Y & \longmapsto & \psi_k^+ \circ (B_k^\hyp)^{-1}(X,Y)
\end{matrix}
& &
\begin{matrix}
\beta_{k,Y}^\hyp \vcentcolon & \Teich_\Sigma^\hyp & \longrightarrow & T^*_Y \Teich_{\overline{\Sigma}}^\hyp \\
& Y & \longmapsto & \psi_k^- \circ (B_k^\hyp)^{-1}(X,Y)
\end{matrix}
\end{align*}
where:
\begin{enumerate}[a)]
	\item $(B_k^\conf)^{-1} (X,Y)$ is the conjecturally unique quasi-\hsk Fuchsian manifold whose upper and lower $k$-\hsk surfaces have $X$ and $Y$ as conformal classes of their second fundamental forms, respectively;
	\item $(B_k^\hyp)^{-1} (X,Y)$ is the unique quasi-\hsk Fuchsian manifold whose upper and lower $k$-\hsk surfaces have $X$ and $Y$ as hyperbolic structures induced by their first fundamental forms, respectively;
	\item $\phi_k^\pm \circ (B_k^\conf)^{-1} (X,Y)$ are the holomorphic quadratic differentials $q_k^\pm$ on the upper and lower $k$-\hsk surfaces (as defined in Section \ref{subsec:phik});
	\item $\psi_k^\pm \circ (B_k^\hyp)^{-1}(X,Y)$ are the $1$-\hsk forms $\dd{(L_{\III_k^\pm})_{\I_k^\pm}}$ on the upper and lower $k$-\hsk surfaces (as defined in Section \ref{subsec:psik}).
\end{enumerate}

Now that we have introduced all the notation, we are ready to state the formulations of the quasi-\hsk Fuchsian reciprocity Theorems that follow from Theorem \ref{thm:reciprocities}:

\begin{theorem} \label{thm:quasi_fuch_recipr_conf}
	For every $(X,Y) \in \Teich_\Sigma^\hyp \times \Teich_{\overline{\Sigma}}^\hyp$, the differentials of the maps
	\[
	\dd{(\beta_{k,X}^\hyp)}_Y \vcentcolon T_Y \Teich_{\overline{\Sigma}}^\hyp \longrightarrow T^*_X \Teich_\Sigma^\hyp , \qquad \dd{(\beta_{k,Y}^\hyp)}_X \vcentcolon T_X \Teich_\Sigma^\hyp \longrightarrow T^*_Y \Teich_{\overline{\Sigma}}^\hyp
	\]
	are adjoint linear operators.
\end{theorem}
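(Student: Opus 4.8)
The plan is to deduce the statement from the exactness of the pullback $\psi_k^* \lambda^\hyp$ established inside the proof of Theorem~\ref{thm:reciprocities}, together with the fact that $B_k^\hyp$ is a diffeomorphism. Specializing Theorem~\ref{thm:reciprocities} and its proof to the quasi-\hsk Fuchsian case $M \cong \Sigma \times \R$, for which $\partial M = \Sigma \sqcup \overline{\Sigma}$, we have $\Teich_{\partial M}^\hyp = \Teich_\Sigma^\hyp \times \Teich_{\overline{\Sigma}}^\hyp$ and $T^* \Teich_{\partial M}^\hyp = T^* \Teich_\Sigma^\hyp \times T^* \Teich_{\overline{\Sigma}}^\hyp$, whose Liouville form splits as $\lambda^\hyp = \lambda^\hyp_+ + \lambda^\hyp_-$, the sum of the Liouville forms of the two factors; moreover $B_k^\hyp$ is the composition of $\psi_k = (\psi_k^+, \psi_k^-) \colon \QF_\Sigma \to T^* \Teich_\Sigma^\hyp \times T^* \Teich_{\overline{\Sigma}}^\hyp$ with the two bundle projections. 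I would use $G \defin (B_k^\hyp)^{-1}$ to identify $\QF_\Sigma$ with $\Teich_\Sigma^\hyp \times \Teich_{\overline{\Sigma}}^\hyp$, set $F \defin \Vol_k^* \circ G$, and note that under this identification $\psi_k^+ \circ G$ is the graph-\hsk type map $(X,Y) \mapsto (X, \beta_{k,X}^\hyp(Y))$ into $T^* \Teich_\Sigma^\hyp$, and $\psi_k^- \circ G$ the map $(X,Y) \mapsto (Y, \beta_{k,Y}^\hyp(X))$ into $T^* \Teich_{\overline{\Sigma}}^\hyp$.

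The core of the argument is a short pullback computation. From the proof of Theorem~\ref{thm:reciprocities} we have $\psi_k^* \lambda^\hyp = -2\, \dd{\Vol_k^*}$, hence $(\psi_k \circ G)^* \lambda^\hyp = G^*\bigl(-2\, \dd{\Vol_k^*}\bigr) = -2\, \dd{F}$. On the other hand, since $\lambda^\hyp_+$ is the Liouville form of the first factor and $\psi_k^+ \circ G$ has the graph form above, evaluating the Liouville forms gives
\begin{gather*}
\bigl( (\psi_k \circ G)^* \lambda^\hyp_+ \bigr)_{(X,Y)}(\delta X, \delta Y) = \beta_{k,X}^\hyp(Y)(\delta X), \\
\bigl( (\psi_k \circ G)^* \lambda^\hyp_- \bigr)_{(X,Y)}(\delta X, \delta Y) = \beta_{k,Y}^\hyp(X)(\delta Y).
\end{gather*}
Comparing these with $(\psi_k \circ G)^* \lambda^\hyp = -2\, \dd{F}$ and taking $\delta Y = 0$, resp.\ $\delta X = 0$, yields
\[
\beta_{k,X}^\hyp(Y) = -2\, \mathrm{d}_1 F(X,Y), \qquad \beta_{k,Y}^\hyp(X) = -2\, \mathrm{d}_2 F(X,Y),
\]
where $\mathrm{d}_1 F(X,Y) \in T^*_X \Teich_\Sigma^\hyp$ is the differential of $X' \mapsto F(X', Y)$ at $X$, and $\mathrm{d}_2 F(X,Y) \in T^*_Y \Teich_{\overline{\Sigma}}^\hyp$ the differential of $Y' \mapsto F(X, Y')$ at $Y$. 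In other words, up to the factor $-2$, the two Bers-\hsk type maps are the partial gradients of the single smooth function $F$ on $\Teich_\Sigma^\hyp \times \Teich_{\overline{\Sigma}}^\hyp$.

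Differentiating, $\dd{(\beta_{k,X}^\hyp)}_Y = -2\, \mathrm{d}_2 \mathrm{d}_1 F$ and $\dd{(\beta_{k,Y}^\hyp)}_X = -2\, \mathrm{d}_1 \mathrm{d}_2 F$, and the assertion that these linear operators are mutually adjoint is precisely the symmetry $\mathrm{Hess}(F)\bigl( (u,0), (0,v) \bigr) = \mathrm{Hess}(F)\bigl( (0,v), (u,0) \bigr)$ of the mixed second derivatives of $F$ (Schwarz's theorem), which holds because $\Vol_k^*$ is smooth on $\mathscr{G}(M)$ and $B_k^\hyp$ is a $C^\infty$ diffeomorphism, so $F$ is in particular $C^2$. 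Equivalently, this is the infinitesimal form of the quasi-\hsk Fuchsian case of Theorem~\ref{thm:reciprocities}, which states that $\psi_k(\QF_\Sigma)$ is a Lagrangian submanifold of $(T^* \Teich_\Sigma^\hyp \times T^* \Teich_{\overline{\Sigma}}^\hyp, \omega^\hyp)$.

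The only points requiring care are of a bookkeeping nature: verifying that the Liouville form of the product cotangent bundle splits as $\lambda^\hyp_+ + \lambda^\hyp_-$, that $\psi_k^+$ and $\psi_k^-$ lie over the first and second factor of $B_k^\hyp$ respectively, and keeping track of the overall constant $-2$ coming from $\psi_k^* \lambda^\hyp = -2\, \dd{\Vol_k^*}$; the analytic input --- the $C^2$-\hsk regularity of $F$ --- is standard. The same argument, with $\psi_k$ and $\Vol_k^*$ replaced by $\phi_k$ and $W_k$ and the identity $\phi_k^* \lambda^\conf = -\dd{W_k}$ (also established in the proof of Theorem~\ref{thm:reciprocities}), proves the conformal analogue for the maps $\beta_{k,X}^\conf$, $\beta_{k,Y}^\conf$, once $B_k^\conf$ is known (or, as here, assumed) to be a diffeomorphism.
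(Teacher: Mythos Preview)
Your argument is correct. Both your proof and the paper's reduce the adjointness to the content of Theorem~\ref{thm:reciprocities} in the quasi-\hsk Fuchsian case, but the packaging differs. The paper invokes a general symplectic identity: for any section $F \colon N^+ \times N^- \to T^*(N^+ \times N^-)$ one has
\[
\scal{\dd{(F^-_Y)}_X(u)}{v} - \scal{\dd{(F^+_X)}_Y(v)}{u} = (F^* \omega)_{(X,Y)}\bigl((u,0),(0,v)\bigr),
\]
and then applies it to $F = \psi_k \circ (B_k^\hyp)^{-1}$, using only the Lagrangian conclusion $F^* \omega^\hyp = 0$. You instead use the stronger exactness statement $\psi_k^* \lambda^\hyp = -2\,\dd{\Vol_k^*}$ established inside the proof of Theorem~\ref{thm:reciprocities}, thereby exhibiting an explicit generating function $F = \Vol_k^* \circ (B_k^\hyp)^{-1}$ whose partial differentials are the two Bers-\hsk type maps; adjointness is then Schwarz's theorem for the mixed Hessian of $F$. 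Your route is slightly more concrete and yields the extra information that $\beta_{k,X}^\hyp(Y) = -2\,\mathrm{d}_1 F$, while the paper's formulation makes the equivalence with the Lagrangian property transparent and would go through even if one only knew $\psi_k^* \omega^\hyp = 0$ without an explicit primitive. The two are, of course, formally equivalent: a section of $T^*N$ pulls back $\lambda$ to a closed form exactly when its image is Lagrangian, and the symplectic identity above is the coordinate expression of that closedness.
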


\begin{theorem} \label{thm:quasi_fuch_recipr_hyp}
	If the map $B_k^\conf$ is a diffeomorphism, then for every $(X,Y) \in \Teich_\Sigma^\conf \times \Teich_{\overline{\Sigma}}^\conf$ , the differentials of the maps
	\[
	\dd{(\beta_{k,X}^\conf)}_Y \vcentcolon T_Y \Teich_{\overline{\Sigma}}^\conf \longrightarrow T^*_X \Teich_\Sigma^\conf , \qquad \dd{(\beta_{k,Y}^\conf)}_X \vcentcolon T_X \Teich_\Sigma^\conf \longrightarrow T^*_Y \Teich_{\overline{\Sigma}}^\conf
	\]
	are adjoint linear operators.
\end{theorem}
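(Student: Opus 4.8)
The plan is to deduce both reciprocity statements directly from the Lagrangian property established in Theorem~\ref{thm:reciprocities}, via the classical observation that the graph of a closed $1$-form is Lagrangian, read backwards and split across a product. I will spell out the conformal case (Theorem~\ref{thm:quasi_fuch_recipr_hyp}); the hyperbolic case (Theorem~\ref{thm:quasi_fuch_recipr_conf}) is word-for-word the same after replacing $\phi_k$, $B_k^\conf$, $\omega^\conf$ by $\psi_k$, $B_k^\hyp$, $\omega^\hyp$, and using that $B_k^\hyp$ is always a diffeomorphism.

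First I would record the product decomposition. For a quasi-\hsk Fuchsian manifold $M$ the boundary $\partial M$ is $\Sigma \sqcup \overline{\Sigma}$, so $\Teich_{\partial M}^\conf = \Teich_\Sigma^\conf \times \Teich_{\overline{\Sigma}}^\conf$ and, canonically, $T^*\Teich_{\partial M}^\conf = T^*\Teich_\Sigma^\conf \times T^*\Teich_{\overline{\Sigma}}^\conf$, with $\omega^\conf$ the sum of the symplectic forms of the two factors. Under this splitting $\phi_k = (\phi_k^+, \phi_k^-)$, and composing $\phi_k$ with the bundle projection onto the base yields exactly $B_k^\conf = (c_k^+, c_k^-)$, because $\Phi_k(E) = (c_k, q_k)$ has base point $c_k$. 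Assuming $B_k^\conf$ is a diffeomorphism, the map $\eta \defin \phi_k \circ (B_k^\conf)^{-1}$ is then a section of the cotangent bundle $T^*\Teich_{\partial M}^\conf \to \Teich_{\partial M}^\conf$, i.e.\ a $1$-form on $\Teich_\Sigma^\conf \times \Teich_{\overline{\Sigma}}^\conf$, whose image is $\phi_k(\QF_\Sigma)$. Since $s^*\omega = \dd s$ for a section $s$ of a cotangent bundle, Theorem~\ref{thm:reciprocities} says precisely that $\eta$ is closed.

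Next I would decompose $\eta = \eta^+ + \eta^-$ along $T^*(\Teich_\Sigma^\conf \times \Teich_{\overline{\Sigma}}^\conf) = \mathrm{pr}_1^* T^*\Teich_\Sigma^\conf \oplus \mathrm{pr}_2^* T^*\Teich_{\overline{\Sigma}}^\conf$, so that $\eta^+(X,Y) \in T^*_X\Teich_\Sigma^\conf$ and $\eta^-(X,Y) \in T^*_Y\Teich_{\overline{\Sigma}}^\conf$. Unwinding the definitions, $\eta^+(X,Y)$ is the holomorphic quadratic differential $q_k^+$ of the upper $k$-\hsk surface of $(B_k^\conf)^{-1}(X,Y)$, hence $\eta^+(X,\cdot) = \beta_{k,X}^\conf$, and likewise $\eta^-(\cdot,Y) = \beta_{k,Y}^\conf$. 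Writing $\eta = \sum_i f_i\, \dd X^i + \sum_j g_j\, \dd Y^j$ in local coordinates, the vanishing of the mixed part of $\dd\eta$ amounts to $\partial_{Y^j} f_i = \partial_{X^i} g_j$ for all $i,j$; the first matrix represents $\dd(\beta_{k,X}^\conf)_Y\colon T_Y\Teich_{\overline{\Sigma}}^\conf \to T^*_X\Teich_\Sigma^\conf$, the second represents $\dd(\beta_{k,Y}^\conf)_X$, and the coincidence of their entries is exactly the statement that the two operators are transpose to one another, once $T^*_X\Teich_\Sigma^\conf$ and $T_X\Teich_\Sigma^\conf$ are identified via the canonical pairing. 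This is Theorem~\ref{thm:quasi_fuch_recipr_hyp}, and the same computation with $\psi_k$ in place of $\phi_k$ gives Theorem~\ref{thm:quasi_fuch_recipr_conf}.

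No step here is genuinely difficult --- the argument is a formal unpacking of Theorem~\ref{thm:reciprocities}. The points that would need to be written out with care are: (i) that the base point of $\Phi_k$ (resp.\ $\Psi_k$) is $c_k$ (resp.\ $h_k = (-k)\I_k$), so that the base projection of $\phi_k$ equals $B_k^\conf$ and $\eta$ is genuinely a section; (ii) the identification of $\eta^{\pm}$ with the maps $\beta_{k,\cdot}^\conf$ of items (c) and (d) preceding the statement; and (iii) the tracking of sign conventions, so that the conclusion reads ``adjoint'' rather than ``minus adjoint'' --- this is consistent with the conventions already used by Krasnov and Schlenker for $\Sch$ in \cite{krasnov2009symplectic}, of which the present result is the $\Phi_k$ (resp.\ $\Psi_k$) analogue. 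The one delicate hypothesis, that $B_k^\conf$ be a diffeomorphism, is assumed in Theorem~\ref{thm:quasi_fuch_recipr_hyp}; in the hyperbolic case it is unconditional by \cite{labourie1991probleme}, \cite{schlenker2006hyperbolic}.
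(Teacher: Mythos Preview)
Your proposal is correct and follows essentially the same approach as the paper: you identify $\eta = \phi_k \circ (B_k^\conf)^{-1}$ as a section of $T^*(\Teich_\Sigma^\conf \times \Teich_{\overline{\Sigma}}^\conf)$, invoke Theorem~\ref{thm:reciprocities} to conclude $\eta^*\omega^\conf = \dd\eta = 0$, and then read off the adjointness from the mixed part of $\dd\eta$. The paper does the same, only packaging your local-coordinate computation into the coordinate-free identity $\langle \dd(F^-_Y)_X(u), v\rangle - \langle \dd(F^+_X)_Y(v), u\rangle = (F^*\omega)_{(X,Y)}((u,0),(0,v))$ for a section $F$ and then applying Theorem~\ref{thm:reciprocities}; this is exactly your argument in invariant form.
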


\begin{proof}[Proof of Theorems \ref{thm:quasi_fuch_recipr_conf} and \ref{thm:quasi_fuch_recipr_hyp}]
	Let $\mappa{F}{N^+ \times N^-}{T^*(N^+ \times N^-)}$ be a smooth function satisfying $\pi \circ F = \id$, where $\mappa{\pi}{T^* (N^+ \times N^-)}{N^+ \times N^-}$ is the cotangent bundle projection. For every $X$ in $N^+$, we set $\mappa{F^+_X}{N^-}{T_X^* N^+}$ to be $F^+_X(Y) \defin F(X,Y)^+$, where $F(X,Y)^+$ is the component of $F(X,Y)$ in the fiber $T_X^* N^+$, and, for every $Y \in N_-$, we set $\mappa{F^-_Y}{N^+}{T^*_Y N^-}$ to be $F^-_Y(X) \defin F(X,Y)^-$, where $F(X,Y)^-$ is the component of $F(X,Y)$ in the fiber $T_Y^* N^-$. Then, the following relation holds:
	\[
	\scal{\dd{(F^-_Y)}_X(u)}{v} - \scal{\dd{(F^+_X)}_Y(v)}{u} = (F^* \omega)_{(X,Y)} ((u,0), (0,v)) ,
	\]
	for all $(X,Y) \in N^+ \times N^-$, $u \in T_X N^+$, $v \in T_Y N^-$. A proof of this relation can be found in \cite{} for the function $F = \Sch$, the proof of the general case is formally identical. Now, using this relation for the maps $F = \phi_k \circ (B_k^\conf)^{-1}$ and $F = \psi_k \circ (B_k^\hyp)^{-1}$, and applying Theorem \ref{thm:reciprocities}, we obtained the desired statement.
\end{proof}


\section{The \texorpdfstring{$k$}{k}-\hsk flows are Hamiltonian}

In this section we show that the $k$-\hsk surface foliation of a hyperbolic end can be described as the integral curve of a time-\hsk dependent Hamiltonian vector field with respect to the symplectic structure $2 \Phi_k^* \omega^\conf = \Psi_k^* \omega^\hyp$ on $\Ends(\Sigma)$, which does not depend on $k$ in light of Theorem \ref{thm:symplectic_k}. The vector fields we will look at are defined in terms of the diffeomorphisms $(\Phi_k)_k$ and $(\Psi_k)_k$ as follows:
\begin{align*}
X_k & \defin \dv{h} \left. \Phi_{k + h} \circ \Phi_k^{- 1} \right|_{h = 0}  \in \Gamma(T (T^* \Teich_\Sigma^\conf)) , \\
Y_k & \defin \dv{h} \left. \Psi_{k + h} \circ \Psi_k^{- 1} \right|_{h = 0}  \in \Gamma(T (T^* \Teich_\Sigma^\hyp)) .
\end{align*}
In order to simplify the notation, whenever we have an object $X$ that depends on the curvature $k$, we will denote by $\bdot{X}$ its derivative with respect to $k$. We denote by $\mappa{m_k}{\Ends(\Sigma)}{\R}$ the function
\[
m_k(E) \defin \int_{\Sigma_k} H_k \dd{a_{\I_k}} .
\]

\begin{lemma}
	For every $k \in (-1,0)$, we have
	\begin{align}
	\lambda^\conf(X_k) \circ \Phi_k & = - \bdot{w}_k + \frac{1}{8 (k + 1)} \ m_k , \label{eq:liouville_time_dep_vect_conf} \\
	\lambda^\hyp(Y_k) \circ \Psi_k & = - 2 \bdot{v}_k^* + \frac{1}{2 k} \ m_k . \label{eq:liouville_time_dep_vect_hyp}
	\end{align}
	\begin{proof}
		Let $E$ be a fixed hyperbolic end. If $(c_k,q_k)$ denotes the point $\Phi_k(E) \in T^* \Teich_\Sigma^\conf$, then the Liouville form $\lambda^\conf$ satisfies
		\[
		\lambda^\conf(X_k) \circ \Phi_k(E) = (\lambda^\conf)_{\Phi_k(E)} \left( \dv{h} \left. \Phi_{k + h}(E) \right|_{h = 0} \right) = \Re \scal{q_k}{\bdot{c}_k} .
		\]
		By Proposition \ref{prop:variation_W_volume}, we have
		\begin{align*}
		\bdot{w}_k(E) & = \frac{1}{4} \int_{\Sigma_k} \left( \scall*{\bdot{\II}_k}{\III_k - \frac{H_k}{2}\II_k}_{\II_k} + \frac{H_k}{2 (k + 1)} \right) \dd{a}_{\I_k} \\
		& = - \Re \scal{q_k}{\bdot{c}_k} + \frac{1}{8 (k + 1)} \int_{\Sigma_k} H_k \dd{a}_{\I_k} \\
		& = - \Re \scal{q_k}{\bdot{c}_k} + \frac{1}{8(k + 1)} \ m_k(E) .
		\end{align*}
		Combining these two relations we obtain the first part of the statement. Similarly, we see that
		\[
		\lambda^\hyp(Y_k) \circ \Psi_k(E) = - \frac{\sqrt{k + 1}}{k} \dd{(L_{h_k^*})}_{h_k}(\bdot{h}_k) .
		\]
		By definition of $h_k$, we have $\bdot{h}_k = - \I_k - k \ \bdot{\I}_k$. Using Lemma \ref{lem:variation_hyperbolic_length}, we obtain
		\begin{align*}
			\lambda^\hyp(Y_k) \circ \Psi_k(E) & = \frac{1}{2 k} \int_{\Sigma_k} \scall{- \I_k - k \ \bdot{\I}_k}{\II_k - H_k \I_k}_{\I_k} \dd{a_{\I_k}} \\
			& = - \frac{1}{2} \int_{\Sigma_k} \scall{\bdot{\I}_k}{\II_k - H_k \I_k}_{\I_k} \dd{a_{\I_k}} + \frac{1}{2 k} \int_{\Sigma_k} H_k \dd{a_{\I_k}} \\
			& = - 2 \bdot{v}_k^*(E) + \frac{1}{2 k} \ m_k(E) ,
		\end{align*}
		where, in the last step, we used Proposition \ref{prop:variation_dual_volume}.
	\end{proof}
\end{lemma}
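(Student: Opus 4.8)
The plan is to evaluate each side of the two identities pointwise at a fixed hyperbolic end $E$ and to match the resulting expressions, essentially recycling the computations of Lemma~\ref{lem:differential_Wk_volume} and of the proof of Theorem~\ref{thm:schlafli_Vk*}, with $k$-derivatives in place of the structural variations $\delta$. For the Liouville terms: by construction the value of $X_k$ at $\Phi_k(E)$ is the velocity $\dv{h}\left.\Phi_{k+h}(E)\right|_{h=0}$ of the curve $k\mapsto\Phi_k(E)=(c_k,q_k)$ in $T^*\Teich_\Sigma^\conf$, whose image under the cotangent projection is $\bdot{c}_k$, the first-order variation of the conformal class of $\II_k$ as $k$ moves (the hyperbolic structure on $E$ being held fixed); hence, straight from the definition of the Liouville form, $\lambda^\conf(X_k)\circ\Phi_k(E)=\Re\scal{q_k}{\bdot{c}_k}$. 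The same observation, together with $\Psi_k(E)=\bigl(h_k,\,-\frac{\sqrt{k+1}}{k}\dd{(L_{h_k^*})}_{h_k}\bigr)$, gives $\lambda^\hyp(Y_k)\circ\Psi_k(E)=-\frac{\sqrt{k+1}}{k}\dd{(L_{h_k^*})}_{h_k}(\bdot{h}_k)$, with $\bdot{h}_k=-\I_k-k\,\bdot{\I}_k$ since $h_k=(-k)\I_k$.

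Next I would differentiate the relative $W$-volume in $k$. Since the concave pleated boundary $\partial E$, and hence the term $\frac12 L_\mu(m)$, does not depend on $k$, we have $\bdot{w}_k(E)=\dv{k}\bigl(\Vol(E_k)-\frac14\int_{\Sigma_k}H_k\,\dd{a}_{\I_k}\bigr)$, which is precisely the first-order variation of the $W$-volume of $E_k$ under the deformation that slides the boundary surface along the $k$-surface foliation; I would evaluate it using the general variation formula of Proposition~\ref{prop:variation_W_volume}. The one new feature, absent from earlier applications such as Lemma~\ref{lem:differential_Wk_volume}, is that the term carrying $\delta K_e$ no longer vanishes: the extrinsic curvature of $\Sigma_k$ equals $k+1$, so its $k$-derivative is identically $1$ and this term contributes $\frac14\int_{\Sigma_k}\frac{H_k}{2(k+1)}\,\dd{a}_{\I_k}=\frac{1}{8(k+1)}m_k(E)$. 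The remaining summand $\frac14\int_{\Sigma_k}\scall*{\bdot{\II}_k}{\III_k-\frac{H_k}{2}\II_k}_{\II_k}\dd{a}_{\I_k}$ is transformed, exactly as in Lemma~\ref{lem:differential_Wk_volume} via \eqref{eq:area_forms}, \eqref{eq:hopf_diff_I_and_III} and Lemma~\ref{lem:expression_pairing}, into $-\Re\scal{q_k}{\bdot{c}_k}$. Combining, $\bdot{w}_k(E)=-\Re\scal{q_k}{\bdot{c}_k}+\frac{1}{8(k+1)}m_k(E)$, which rearranges into \eqref{eq:liouville_time_dep_vect_conf}.

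The hyperbolic identity is handled analogously. I would substitute $\bdot{h}_k=-\I_k-k\,\bdot{\I}_k$ into Lemma~\ref{lem:variation_hyperbolic_length} with $b=b_k=\frac{1}{\sqrt{k+1}}B_k$, the Labourie operator of $(h_k,h_k^*)$, expand using $h_k=(-k)\I_k$ (so $h_k(b_k\cdot,\cdot)=-\frac{k}{\sqrt{k+1}}\II_k$, $\tr(b_k)=\frac{1}{\sqrt{k+1}}H_k$ and $\dd{a}_{h_k}=(-k)\dd{a}_{\I_k}$), and separate the contribution of $-\I_k$ from that of $-k\,\bdot{\I}_k$. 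Using $\scall{\I_k}{\II_k-H_k\I_k}_{\I_k}=\tr_{\I_k}\II_k-2H_k=-H_k$, the $-\I_k$ part produces $\frac{1}{2k}\int_{\Sigma_k}H_k\,\dd{a}_{\I_k}=\frac{1}{2k}m_k(E)$, while the $-k\,\bdot{\I}_k$ part equals $-2\bdot{v}_k^*(E)$ by the dual-volume variation formula (Proposition~\ref{prop:variation_dual_volume}), exactly as in the proof of Theorem~\ref{thm:schlafli_Vk*} and with no $\delta K_e$ correction. Together these yield $\lambda^\hyp(Y_k)\circ\Psi_k(E)=-2\bdot{v}_k^*(E)+\frac{1}{2k}m_k(E)$, i.e.\ \eqref{eq:liouville_time_dep_vect_hyp}.

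I do not expect any genuine conceptual obstacle: the argument is bookkeeping layered on top of the Schl\"afli-type formulas already established. The points that require attention are the non-vanishing of the $\delta K_e$ term in Proposition~\ref{prop:variation_W_volume} — invisible in Lemma~\ref{lem:differential_Wk_volume} because $\partial M_k$ remains a $k$-surface under structural deformations — and the smooth dependence of the $k$-surface foliation on $k$, which is what legitimizes differentiating $\Vol(E_k)$ and $\int_{\Sigma_k}H_k\,\dd{a}_{\I_k}$ in $k$ (cf.\ the discussion around Corollary~\ref{cor:convergence_phik}). Beyond that, the only real risk is arithmetical: keeping straight the many multiplicative constants built into the definitions of $q_k$, $h_k$, $h_k^*$ and $b_k$.
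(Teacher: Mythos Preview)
Your proposal is correct and follows essentially the same route as the paper's proof: identify $\lambda^\conf(X_k)\circ\Phi_k(E)=\Re\scal{q_k}{\bdot{c}_k}$ and $\lambda^\hyp(Y_k)\circ\Psi_k(E)=-\frac{\sqrt{k+1}}{k}\dd{(L_{h_k^*})}_{h_k}(\bdot{h}_k)$ from the definition of the Liouville form, then compute $\bdot{w}_k$ and $\bdot{v}_k^*$ via Propositions~\ref{prop:variation_W_volume} and~\ref{prop:variation_dual_volume} respectively, matching terms. Your observation that the $\delta K_e$ term in Proposition~\ref{prop:variation_W_volume} now contributes (because $K_e=k+1$ has $k$-derivative equal to $1$) is exactly the point the paper uses, and your handling of the $\bdot{h}_k=-\I_k-k\,\bdot{\I}_k$ split mirrors the paper's computation.
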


\begin{lemma} \label{lem:deriv_pullback_liouville_time_dep}
	Let $M$ and $N$ be a $n$- and a $2n$-\hsk manifold, respectively, and let $\mappa{\varphi_t}{N}{T^* M}$ be a $1$-\hsk parameter family of diffeomorphisms, indexed by a variable $t$ varying in an open interval $J$ of $\R$. Denote by $\lambda$ the Liouville form of $T^* M$, and set $V_t$ to be the vector field of $T^* M$ given by
	\[
	V_t \defin \dv{h} \left. \varphi_{t + h} \circ \varphi_t^{- 1} \right|_{h = 0} ,
	\]
	for any $t \in J$. Then we have
	\[
	(\varphi_t^{-1})^* \left( \dv{t} \varphi_t^* \lambda \right) = \iota_{V_t} \omega + \dd(\iota_{V_t} \lambda) ,
	\]
	for every $t \in J$.
	\begin{proof}
		The statement is a consequence of Cartan formula. The time-\hsk dependent family of vector fields $(V_t)_t$ corresponds to a ordinary vector field $\tilde{V}$ on the manifold $J \times T^* M$, by setting
		\[
		\tilde{V}(t,\cdot) \defin \partial_t + V_t(\cdot) \in T_t J \times T_\cdot (T^* M) \cong T_{(t,\cdot)} (J \times T^* M) .
		\]
		An intergral curve $\gamma = \gamma(t)$ of $(V_t)_t$ in $T^* M$ corresponds to the integral curve $t \mapsto (t,\gamma(t))$ of $\tilde{V}$ in $J \times T^* M$. Let $\pi$ denote the projection of $J \times T^* M$ onto its second component. We apply Cartan formula to the $1$-\hsk form $\pi^* \lambda$ and the vector field $\tilde{V}$, obtaining
		\begin{equation} \label{eq:cartan_formula}
			\mathcal{L}_{\tilde{V}} \pi^* \lambda = \iota_{\tilde{V}} \dd( \pi^* \lambda ) + \dd(\iota_{\tilde{V}} \pi^* \lambda) . 
		\end{equation}
		A straightforward computation proves the following relations:
		\begin{gather*}
			\left. \iota_{\tilde{V}} \dd( \pi^* \lambda ) \right|_{(t,\cdot)} = \left. \pi^* (\iota_{V_t} \dd \lambda) \right|_{(t,\cdot)} , \\
			(\iota_{\tilde{V}} \pi^* \lambda) (t,\cdot) = (\iota_{V_t} \lambda \circ \pi) (t,\cdot) , \\
			\left. (\varphi_t^{-1} \circ \pi)^* \left( \dv{t} \varphi_t^* \lambda \right) \right|_{(t,\cdot)} = \left. \mathcal{L}_{\tilde{V}} \pi^* \lambda \right|_{(t,\cdot)} .
		\end{gather*}
		Replacing these expressions in the equation \eqref{eq:cartan_formula}, we obtain that, for every $t \in J$
		\begin{align*}
		\left. \pi^* \left( (\varphi_t^{-1})^* \left( \dv{t} \varphi_t^* \lambda \right) - \iota_{V_t} \omega - \dd(\iota_{V_t} \lambda) \right) \right|_{(t,\cdot)} = 0 .
		\end{align*}
		Since $\dd{\pi}_{(t,\cdot)}$ is surjective, the pullback by $\pi$ at $(t,\cdot)$ is injective on $k$-\hsk forms. In particular, for every $t \in J$ we must have
		\[
		(\varphi_t^{-1})^* \left( \dv{t} \varphi_t^* \lambda \right) - \iota_{V_t} \omega - \dd(\iota_{V_t} \lambda) = 0 ,
		\]
		which proves the statement.
	\end{proof}
\end{lemma}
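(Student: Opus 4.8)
The plan is to read the identity as an instance of Cartan's magic formula, implemented through the standard suspension trick that turns the time-\hsk dependent field $V_t$ into an autonomous field on the product $J \times T^* M$. I would write $\mappa{\pi}{J \times T^* M}{T^* M}$ for the projection and introduce the autonomous vector field $\tilde{V} \defin \partial_t + V_t$ on $J \times T^* M$. The conceptual core is to identify its flow: since the definition of $V_t$ is exactly the ODE $\dv{t} \varphi_t = V_t \circ \varphi_t$, one checks directly that
\[
\tilde{\Phi}_s(t, p) = \left( t + s, \ \varphi_{t + s} \circ \varphi_t^{-1}(p) \right) .
\]
In words, flowing along $\tilde{V}$ for time $s$ advances the parameter and applies the transition diffeomorphism $\varphi_{t + s} \circ \varphi_t^{-1}$.

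Next I would apply Cartan's formula to the $1$-\hsk form $\pi^* \lambda$ on $J \times T^* M$, obtaining
\[
\mathcal{L}_{\tilde{V}} \pi^* \lambda = \iota_{\tilde{V}} \pi^* \omega + \dd \left( \iota_{\tilde{V}} \pi^* \lambda \right) ,
\]
where I have used $\dd \pi^* \lambda = \pi^* \omega$. I would then pull this whole identity back along the slice inclusion $\mappa{j_t}{T^* M \cong \set{t} \times T^* M}{J \times T^* M}$, which satisfies $\pi \circ j_t = \id$ and $j_t^* \dd t = 0$. The two interior-\hsk product terms are immediate: since $\iota_{\tilde{V}} \pi^* \omega = \pi^*(\iota_{V_t} \omega)$ and $\iota_{\tilde{V}} \pi^* \lambda = (\iota_{V_t} \lambda) \circ \pi$, pulling back by $j_t$ gives $\iota_{V_t} \omega$ and $\dd(\iota_{V_t} \lambda)$ respectively, the spurious $\dd t$-\hsk contributions being killed by $j_t^* \dd t = 0$. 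For the Lie-\hsk derivative term I would use $\mathcal{L}_{\tilde{V}} \pi^* \lambda = \left. \dv{s} \tilde{\Phi}_s^* \pi^* \lambda \right|_{s = 0}$ together with the flow formula above; since $\pi \circ \tilde{\Phi}_s \circ j_t = \varphi_{t + s} \circ \varphi_t^{-1}$, I obtain
\[
j_t^* \, \mathcal{L}_{\tilde{V}} \pi^* \lambda = \left. \dv{s} (\varphi_{t + s} \circ \varphi_t^{-1})^* \lambda \right|_{s = 0} = (\varphi_t^{-1})^* \left( \dv{t} \varphi_t^* \lambda \right) ,
\]
the last equality pulling $(\varphi_t^{-1})^*$ out of the $s$-\hsk derivative since it does not depend on $s$. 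Assembling the three pieces yields precisely the claimed equation.

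The main obstacle, really the only nonformal step, is verifying the flow identity $\tilde{\Phi}_s(t,p) = (t + s, \varphi_{t + s} \circ \varphi_t^{-1}(p))$ and the resulting identification of the Lie-\hsk derivative term with the transport term $(\varphi_t^{-1})^* \dv{t} \varphi_t^* \lambda$: because $V_t$ depends on \emph{absolute} time, the autonomous flow on $J \times T^* M$ couples the parameter advance to the spatial motion, and one must confirm that the slice inclusion $j_t$ is exactly what disentangles this coupling while discarding the harmless $\dd t$-\hsk components. Everything else reduces to a routine application of Cartan's formula and the naturality of pullback under the maps $\pi$ and $\tilde{\Phi}_s$.
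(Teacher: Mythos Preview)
Your proposal is correct and follows essentially the same route as the paper: both introduce the autonomous field $\tilde{V} = \partial_t + V_t$ on $J \times T^* M$ and apply Cartan's formula to $\pi^* \lambda$. The only cosmetic difference is that you descend to $T^* M$ via the slice inclusion $j_t$ (using $\pi \circ j_t = \id$), whereas the paper argues that $\pi^*$ is injective on forms since $\dd\pi$ is surjective; these are two sides of the same coin. Your explicit identification of the flow $\tilde{\Phi}_s$ is a helpful clarification of what the paper records as ``a straightforward computation.''
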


\begin{theoremx}
	For every $k \in (-1,0)$, the vector field $X_k$ of $T^* \Teich_\Sigma^\conf$ is Hamiltonian with respect to the symplectic structure $\omega^\conf$, with Hamiltonian function $- \frac{1}{8(k + 1)} \ m_k \circ \Phi_k^{-1}$. Similarly, the vector field $Y_k$ of $T^* \Teich_\Sigma^\hyp$ is Hamiltonian with respect to the symplectic structure $\omega^\hyp$, with Hamiltonian function $- \frac{1}{2 k} \ m_k \circ \Psi_k^{-1}$.
	
	\begin{proof}
		From Lemma \ref{lem:liouville_forms} we see that
		\begin{equation} \label{eq:deriv_pullback_liouville}
		\dv{k} \Phi_k^* \lambda^\conf = \dv{k} \left[ - \dd{w_k} + \frac{1}{2} (\dd{L} \circ \Th)^* \lambda^\hyp \right] = - \dd{\bdot{w}_k} .
		\end{equation}
		Applying Lemma \ref{lem:deriv_pullback_liouville_time_dep} to $N = \Ends(\Sigma)$, $M = \Teich_\Sigma^\conf$ and $\varphi_t = \Phi_k$, we get
		\begin{equation} \label{eq:deriv_pullback_liouville_time_dep}
		(\Phi_k^{-1})^* \left( \dv{k} \Phi_k^* \lambda^\conf \right) = \iota_{X_k} \omega^\conf + \dd(\iota_{X_k} \lambda^\conf) .
		\end{equation}
		Now, putting everything together, we obtain
		\begin{align*}
		\iota_{X_k} \omega^\conf & = (\Phi_k^{-1})^* \left( \dv{k} \Phi_k^* \lambda^\conf \right) - \dd(\iota_{X_k} \lambda^\conf) \tag{eq. \eqref{eq:deriv_pullback_liouville_time_dep}} \\
		& = - (\Phi_k^{-1})^* \dd{\bdot{w}_k} - \dd \left( - \bdot{w}_k \circ \Phi_k^{-1} + \frac{1}{8 (k + 1)} \ m_k \circ \Phi_k^{-1} \right) \tag{eq \eqref{eq:liouville_time_dep_vect_conf} and \eqref{eq:deriv_pullback_liouville}} \\
		& = - \dd(\bdot{w}_k \circ \Phi_k^{-1}) + \dd(\bdot{w}_k \circ \Phi_k^{-1}) - \frac{1}{8 (k + 1)} \dd(m_k \circ \Phi_k^{-1}) \\
		& = - \frac{1}{8 (k + 1)} \dd(m_k \circ \Phi_k^{-1}) ,
		\end{align*}
		which proves the first part of the statement. With the exact same strategy we can prove the assertion concerning the vector fields $(Y_k)_k$.
	\end{proof}
\end{theoremx}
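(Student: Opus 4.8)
The plan is to obtain the Hamiltonian identity $\iota_{X_k}\omega^\conf = -\frac{1}{8(k+1)}\dd(m_k\circ\Phi_k^{-1})$, and its analogue for $Y_k$, by differentiating in $k$ the Liouville pullback formulas of Lemma~\ref{lem:liouville_forms} and feeding the result into the Cartan-type identity of Lemma~\ref{lem:deriv_pullback_liouville_time_dep}. Throughout I use that $\Phi_k$ and $\Psi_k$ are diffeomorphisms depending smoothly on $k\in(-1,0)$ (Theorem~\ref{thm:labourie_parametrization} and Section~\ref{subsec:psik}), so that $X_k$ and $Y_k$ are well-defined smooth vector fields and $\Ends(\Sigma)$ carries a smooth structure of the appropriate dimension $2\dim\Teich_\Sigma$.

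First I would differentiate the identity $\dd w_k = -\Phi_k^*\lambda^\conf + \frac{1}{2}(\dd L\circ\Th)^*\lambda^\hyp$ of Lemma~\ref{lem:liouville_forms} with respect to $k$. Since the form $(\dd L\circ\Th)^*\lambda^\hyp$ is independent of $k$, this yields $\dv{k}\Phi_k^*\lambda^\conf = -\dd\bdot{w}_k$. Next, applying Lemma~\ref{lem:deriv_pullback_liouville_time_dep} with $N=\Ends(\Sigma)$, $M=\Teich_\Sigma^\conf$, $\varphi_k=\Phi_k$ and $V_k=X_k$ gives
\[
(\Phi_k^{-1})^*\!\left(\dv{k}\Phi_k^*\lambda^\conf\right) = \iota_{X_k}\omega^\conf + \dd(\iota_{X_k}\lambda^\conf) ,
\]
so that, combining the two, $\iota_{X_k}\omega^\conf = -\dd(\bdot{w}_k\circ\Phi_k^{-1}) - \dd(\iota_{X_k}\lambda^\conf)$.

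It then remains to substitute the value of $\iota_{X_k}\lambda^\conf = \lambda^\conf(X_k)$, which is supplied by equation~\eqref{eq:liouville_time_dep_vect_conf}: one has $\lambda^\conf(X_k)\circ\Phi_k = -\bdot{w}_k + \frac{1}{8(k+1)}m_k$, hence $\iota_{X_k}\lambda^\conf = (-\bdot{w}_k + \frac{1}{8(k+1)}m_k)\circ\Phi_k^{-1}$. Plugging this in, the two occurrences of $\dd(\bdot{w}_k\circ\Phi_k^{-1})$ cancel and we are left with $\iota_{X_k}\omega^\conf = -\frac{1}{8(k+1)}\dd(m_k\circ\Phi_k^{-1})$, i.e. $X_k$ is Hamiltonian with Hamiltonian function $-\frac{1}{8(k+1)}m_k\circ\Phi_k^{-1}$. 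The assertion about $Y_k$ follows by the same three steps applied to $\dd v_k^* = -\frac{1}{2}\Psi_k^*\lambda^\hyp + \frac{1}{2}(\dd L\circ\Th)^*\lambda^\hyp$ (which gives $\dv{k}\Psi_k^*\lambda^\hyp = -2\dd\bdot{v}_k^*$) together with equation~\eqref{eq:liouville_time_dep_vect_hyp}, keeping track of the constants $-2$ and $\frac{1}{2k}$.

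Since the two lemmas carry essentially all of the analytic weight, the remaining computation is formal. The one feature worth emphasising — and the only place where the precise shape of the statement is forced — is the cancellation of the $\bdot{w}_k$ (resp. $\bdot{v}_k^*$) terms: it is exactly this cancellation that makes the Hamiltonian depend only on the integrated mean curvature $m_k$ and not on the relative volumes, mirroring the Moncrief flow where the area functional plays the role of Hamiltonian.
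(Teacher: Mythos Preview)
Your proposal is correct and follows essentially the same approach as the paper: differentiate the Liouville pullback identity of Lemma~\ref{lem:liouville_forms} in $k$, apply the Cartan-type identity of Lemma~\ref{lem:deriv_pullback_liouville_time_dep}, and substitute the value of $\iota_{X_k}\lambda^\conf$ from equation~\eqref{eq:liouville_time_dep_vect_conf} to obtain the cancellation of the $\bdot{w}_k$ terms. The treatment of $Y_k$ is likewise identical.
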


\begin{remark} \label{rmk:multipl_constant}
	It can be easily checked that the choice of the multiplicative constant in the definition of $q_k$, and consequently of $\Phi_k$, becomes relevant for Theorem \ref{thm:kflow_hamiltonian} to hold. The same holds for the multiplicative constant in the definition of $\Psi_k$.
\end{remark}


\appendix

\section{}

\begin{lemma} \label{lem:expression_pairing}
	Let $(g_t)_t$ be a $1$-\hsk parameter family of Riemannian metrics on $\Sigma$, with conformal classes $c_t = [g_t]$. If $\delta c$ denotes the Beltrami differential representing the variation of the conformal classes $(c_t)_t$, and $\delta g$ the variation of the Riemannian metrics $(g_t)_t$, then we have
	\[
	\Re \scal{q}{\delta c} = \frac{1}{4} \int_\Sigma \scall{\delta g}{\Re q}_g \dd{a}_g .
	\]
	\begin{proof}
		Let $(g_t)_t$ be a smooth $1$-\hsk parameter family of metrics so that the conformal class of $g = g_0$ is equal to $c = c_0$, and the derivative at $t = 0$ of the conformal class $c_t$ of $g_t$ coincides with $\delta c$. If $X_t$ is an object that depends on $t$, then $\delta X$ will denote its derivative with respect to $t$ at $t = 0$. Let $J_t$ be the almost complex structure of $g_t$ for every $t$. As shown in \cite[Section~2.1]{bonsante2015a_cyclic}, the Beltrami differential $\nu_t$ of the map $\mappa{\id}{(\Sigma, c)}{(\Sigma, c_t)}$ satisfies
		\[
		\nu_t = (\1 - J_t J)^{-1}(1 + J_t J) ,
		\]
		In particular its derivative $\delta \nu$ can be expressed as $\frac{1}{2} \delta J \ J$. The almost complex structure $J_t$ of $g_t$ is characterized by the relation $\dd{a}_t(\cdot, \cdot) = g_t( J_t \cdot, \cdot)$, where $\dd{a}_t$ is the area form of the metric $g_t$.
		Taking the derivative of this identity, and using the fact that $\dd{a}_g = \sqrt{ \det(g_{i j}) } \dd{x}^1 \wedge \dd{x}^2$ in local coordinates, we obtain
		\[
		\frac{1}{2} \scall{\delta g}{g}_g \dd{a} = \frac{1}{2} \tr(g^{-1} \delta g) \dd{a} = \delta(\dd{a}_t) = \delta(g_t( J_t \cdot, \cdot)) = \delta g (J \cdot, \cdot) + g(\delta J \cdot, \cdot) .
		\]
		If $\delta g = g(A \cdot, \cdot)$, with $A$ $g$-\hsk self-\hsk adjoint, then from the relation above we see that
		\[
		\delta J \ J = A - \frac{1}{2} \tr(A) \1 = A_0 ,
		\]
		where $A_0$ stands for the traceless part of $A$. In particular, this proves that $\delta \nu = \frac{1}{2} A_0$. The pairing between Beltrami differentials and holomorphic quadratic differentials can be described as follows:
		\[
		\scal{q}{\mu} \defin \int_\Sigma q \bullet \mu ,
		\]
		where $q \bullet \mu$ is the $\C$-\hsk valued $2$-\hsk form given by
		\[
		(q \bullet \mu) (u,w) \defin \frac{1}{2 i} (q(\mu (u), w) - q(u, \mu(w))) .
		\]
		Again, we refer to \cite[Section~2.1]{bonsante2015a_cyclic} for a more detailed description. Let now $B$ be the traceless and $g$-\hsk self-\hsk adjoint operator satisfying $\Re q (\cdot, \cdot) = g(B \cdot, \cdot)$. Given any unit vector $u$, the basis $u$, $J u$ is orthonormal and positive oriented. In particular, since $q \bullet \delta \nu$ is a multiple of the volume form $\dd{a}_g$ ($\Sigma$ is a $2$-\hsk manifold), we must have $q \bullet \delta \nu = (q \bullet \delta \nu)(u, J u) \dd{a}_g$. Now we observe:
		\begin{align*}
			\Re (q \bullet \delta \nu)(u, J u) & = \Re \frac{1}{2 i} (q(\delta \nu (u), J u) - q(u,\delta \nu(J u))) \\
			& = \Re \frac{1}{2 i} (i q(\delta \nu (u), u) + q( J^2 u,\delta \nu(J u))) \tag{$q$ $\C$-\hsk linear and $J^2 = - \1$} \\
			& = \Re \frac{1}{2} (q(\delta \nu (u), u) + q(\delta \nu (J u), J u)) \tag{$q$ $\C$-\hsk linear} \\
			& = \frac{1}{4} (g(B A_0 u,u) + g(B A_0 J u, J u)) \tag{def. of $B$ and $\delta \nu = \frac{1}{2} A_0$} \\
			& = \frac{1}{4} \tr(B A_0) \tag{$u$, $J u$ orthon. basis}	\\
			& = \frac{1}{4} \tr(B A) \tag{$B$ traceless} \\
			& = \frac{1}{4} \scall{\Re q}{\delta g} \tag{def.s of $A$ and $B$}
		\end{align*}
		Combining what we have proved so far, we obtain that
		\[
		\Re \scal{q}{\delta c} = \int_\Sigma \Re q \bullet \delta \nu = \frac{1}{4} \int_\Sigma \scall{\Re q}{\delta g} \dd{a}_g ,
		\]
		which is our desired relation.
	\end{proof}
\end{lemma}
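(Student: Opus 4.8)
The plan is to reduce this identity --- which compares the canonical pairing of $Q(\Sigma,c)$ with $T_c\Teich_\Sigma^\conf$ against a pointwise metric contraction --- to a fibrewise linear-algebra computation performed with respect to a fixed representative $g$ of $c$. First I would choose a smooth family $(g_t)_t$ realising the prescribed variation, with $g_0=g$ and $[g_0]=c$, write $\delta g=g(A\cdot,\cdot)$ for a $g$-self-adjoint endomorphism $A$, and recall two facts: that $\scal{q}{\mu}=\int_\Sigma q\bullet\mu$, where $q\bullet\mu$ is the $\C$-valued $2$-form $(q\bullet\mu)(u,w)=\tfrac{1}{2i}\big(q(\mu u,w)-q(u,\mu w)\big)$; and that $\delta c$ is represented by the first-order variation $\delta\nu$ of the Beltrami coefficient $\nu_t$ of $\mappa{\id}{(\Sigma,c)}{(\Sigma,c_t)}$. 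The statement then reduces to expressing $\delta\nu$ through $A$ and computing $\Re\int_\Sigma q\bullet\delta\nu$.

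The key preliminary step is the algebraic identification of $\delta\nu$. Starting from the formula $\nu_t=(\1-J_tJ)^{-1}(\1+J_tJ)$, with $J_t$ the almost-complex structure of $g_t$ (this is where I would invoke \cite[Section~2.1]{bonsante2015a_cyclic}), differentiation at $t=0$ gives $\delta\nu=\tfrac12\,\delta J\,J$. I would then compute $\delta J$ by differentiating the relation $\dd{a}_t(\cdot,\cdot)=g_t(J_t\cdot,\cdot)$ that determines $J_t$, inserting the standard formula for the variation of the area form, $\tfrac12\tr(A)\,\dd{a}_g$. This yields the clean identity $\delta J\,J=A-\tfrac12\tr(A)\1$, whose right-hand side is the $g$-traceless part $A_0$ of $A$; hence $\delta\nu=\tfrac12 A_0$.

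The second step is the pointwise evaluation. Since $\Sigma$ is a surface, $\Re(q\bullet\delta\nu)$ is a function times $\dd{a}_g$, so it suffices to evaluate it on one positively oriented $g$-orthonormal frame $(u,Ju)$. Writing $\Re q=g(B\cdot,\cdot)$ with $B$ traceless and $g$-self-adjoint, and using the $\C$-linearity of $q$ (that is, $q(J\cdot,\cdot)=i\,q(\cdot,\cdot)$) together with $J^2=-\1$, the quantity $\Re(q\bullet\delta\nu)(u,Ju)$ collapses, after adding the two basis contributions, to $\tfrac14\tr(BA_0)$; this equals $\tfrac14\tr(BA)$ because $\tr(B)=0$, and $\tr(BA)$ is precisely the metric contraction $\scall{\delta g}{\Re q}_g$. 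Integrating over $\Sigma$ and recalling $\Re\scal{q}{\delta c}=\Re\int_\Sigma q\bullet\delta\nu$ then gives the claimed formula.

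I do not expect a genuine obstacle: the argument is elementary once these ingredients are assembled. The part that requires care is purely bookkeeping --- keeping consistent track of the $\tfrac12$ in $\delta\nu=\tfrac12\,\delta J\,J$, of the $\tfrac{1}{2i}$ normalising $q\bullet\mu$, and of the several points where only traceless parts survive --- so that the overall constant comes out as exactly $\tfrac14$. It is also worth recording that the answer depends on $(g_t)_t$ only through $\delta c$: an infinitesimal conformal change of the family modifies $A$ by a multiple of $\1$, which alters neither $A_0$ nor $\tr(BA)$ (the latter because $\tr(B)=0$), so the formula is independent of the chosen representative $g$ of $c$.
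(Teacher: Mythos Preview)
Your proposal is correct and follows essentially the same route as the paper's proof: the same identification $\delta\nu=\tfrac12\,\delta J\,J=\tfrac12 A_0$ via differentiation of $\dd{a}_t=g_t(J_t\cdot,\cdot)$, the same pairing formula $\scal{q}{\mu}=\int_\Sigma q\bullet\mu$, and the same pointwise evaluation on an orthonormal frame $(u,Ju)$ reducing to $\tfrac14\tr(BA)$. Your closing remark on independence from the representative $g$ is a welcome addition not made explicit in the paper.
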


In the following, we are going to recall the variation formula for the dual volume of hyperbolic $3$-\hsk manifolds, and we will prove the expression of the variation formula of the $W$-\hsk volume that we apply in this paper. First, in order to simplify the notation, we are going to denote by $\dd{a}$ the area form of the \emph{first} fundamental form $\I$ and by $\scall{A}{B}$ the scalar product between $(1,1)$-\hsk tensors induced by the metric $\I$.

In our paper we consider two kinds of variation. One possibility is to fix a smooth compact $3$-\hsk manifold with boundary $N$ and vary its hyperbolic metrics $(g_t)_t$. Each Riemannian metric $g_t$ determines an induced metric $\I_t$, a second fundamental form $\II_t$ and a mean curvature $H_t$ on $\partial N$. In this situation, the variations $\delta \I$, $\delta \II$ and $\delta H$ have to be understood as the derivatives in $t$ of the families of tensors $(\I_t)_t$, $(\II_t)_t$ and $(H_t)_t$ on the fixed manifold $\partial N$. The other possibility is to \emph{fix} a hyperbolic metric $g$ on a manifold $N'$, and to look at a $1$-\hsk parameter family of subsets $N_t$ of $N'$, whose boundaries are smooth and vary regularly in $t$, The relations that we are going to describe hold in both cases, so we will intentionally give "ambiguous" statements that can be applied in both the situations.

\begin{theorem}[{Differential Schl\"afli formula, \cite[Theorem~1, Theorem~2]{schlenker_rivin1999schlafli}}] \label{thm:diff_schlafli_formula}
	The variation of the hyperbolic volume of $N$ can be expressed as follows:
	\[
	\delta \Vol(N) = \frac{1}{2} \int_{\partial N} \left( \delta H + \frac{1}{2} \scall{\delta \I}{\II} \right) \dd{a} .
	\]
\end{theorem}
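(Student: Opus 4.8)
The plan is to establish the formula by the classical variational argument of \cite{schlenker_rivin1999schlafli}: express the first variation of the volume as the integral over $N$ of an \emph{exact} $3$-form, apply Stokes' theorem, and identify the resulting boundary integrand --- after one integration by parts on the closed surface $\partial N$ --- with $\frac{1}{2}\bigl(\delta H+\frac{1}{2}\scall{\delta\I}{\II}\bigr)\dd{a}$.

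First I would reduce the two variational settings to one: fix the smooth compact manifold with boundary $N$ and a smooth family $(g_t)$ of hyperbolic metrics on it, and set $h\defin\dot g_0$; the moving-domain case $N_t\subset N'$ is then the special instance $g_t=\psi_t^*g$, where $\psi_t$ is a flow carrying $N$ onto $N_t$. From $\dvol_{g}=\sqrt{\det g}\,\dd{x}$ one has $\delta\Vol(N)=\int_N\frac{1}{2}\tr_g(h)\,\dvol_g$. The decisive input is the local rigidity of constant curvature metrics: since every $g_t$ has curvature $-1$, near each point one has $h=\mathcal{L}_{X}g$ for a locally defined vector field $X$, so that locally $\frac{1}{2}\tr_g(h)=\divr_g X$ and $\frac{1}{2}\tr_g(h)\,\dvol_g=\dd(\iota_{X}\dvol_g)$, with $\iota$ denoting contraction. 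Producing a \emph{globally} defined primitive requires care; following \cite{schlenker_rivin1999schlafli}, one passes to the universal cover and works with the developing map $\dev\colon\widetilde N\to\Hyp^3\subset\R^{3,1}$, for which the variation of $\dev$ is an $\R^{3,1}$-valued map and the interior part of the deformation is generated by a globally defined vector field $X$. Stokes' theorem then gives
\[
\delta\Vol(N)=\int_{\partial N}\iota_{X}\dvol_g=\int_{\partial N}\scall{X}{\nu}_{\I}\,\dd{a},
\]
with $\nu$ the \emph{outward} unit normal of $\partial N$ and $\dd{a}$ the area form of $\I\defin g|_{\partial N}$.

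The substance of the proof is the computation of $\scall{X}{\nu}_{\I}$ along $\partial N$. Writing $X=X^\top+f\,\nu$ there, the standard first variation formulas for a moving hypersurface in a Riemannian $3$-manifold read
\[
\delta\I=\mathcal{L}_{X^\top}\I+2f\,\II,\qquad \delta H=\Delta_{\I}f-\bigl(\abs{\II}^2+\mathrm{Ric}(\nu,\nu)\bigr)f+X^\top(H),
\]
and hyperbolicity of $g$ forces $\mathrm{Ric}(\nu,\nu)=-2$. Since $\frac{1}{2}\scall{\delta\I}{\II}=\frac{1}{2}\scall{\mathcal{L}_{X^\top}\I}{\II}_{\I}+f\abs{\II}^2$, the terms $-\abs{\II}^2 f$ and $+f\abs{\II}^2$ cancel and
\[
\delta H+\tfrac{1}{2}\scall{\delta\I}{\II}=2f+\Delta_{\I}f+X^\top(H)+\tfrac{1}{2}\scall{\mathcal{L}_{X^\top}\I}{\II}_{\I}.
\]
Integrating over the closed surface $\partial N$, the Laplacian term drops; integration by parts gives $\int_{\partial N}\scall{\mathcal{L}_{X^\top}\I}{\II}_{\I}\dd{a}=-2\int_{\partial N}\scall{X^\top}{\divr_{\I}\II}_{\I}\dd{a}$, and the Codazzi equation \eqref{eq:codazzi_eq} in a constant-curvature ambient space forces $\divr_{\I}\II=\dd H$, so this term equals $-2\int_{\partial N}X^\top(H)\dd{a}$ and cancels the remaining tangential contribution. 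Hence $\int_{\partial N}\bigl(\delta H+\frac{1}{2}\scall{\delta\I}{\II}\bigr)\dd{a}=2\int_{\partial N}f\,\dd{a}=2\,\delta\Vol(N)$, which is the claimed identity.

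I expect the main obstacle to be exactly this boundary computation: one must keep in mind that the $\mathcal{L}_{X^\top}\I$ summand in $\delta\I$ is only an infinitesimal reparametrization induced by the ambient field $X$ rather than an intrinsic deformation of $\partial N$, use the correct first variation of the mean curvature (in particular the sign in $\mathrm{Ric}(\nu,\nu)=-2$), and then recognize that it is precisely the Codazzi identity for a constant-curvature target that makes all the tangential terms cancel after integration over $\partial N$. The other delicate point is the globalization of the interior primitive, which is the reason one prefers to work with the developing map rather than to patch local vector fields directly on $N$.
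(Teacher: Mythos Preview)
The paper does not prove this statement: Theorem~\ref{thm:diff_schlafli_formula} is quoted verbatim from \cite{schlenker_rivin1999schlafli} and used as a black box in the subsequent Propositions~\ref{prop:variation_W_volume} and~\ref{prop:variation_dual_volume}, with no argument given. So there is no ``paper's own proof'' to compare your attempt against.

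That said, your sketch is the standard Rivin--Schlenker argument and is essentially correct. The logic --- $\delta\Vol=\frac12\int_N\tr_g h\,\dvol_g$, local rigidity of constant curvature metrics giving $h=\mathcal L_X g$, Stokes to reduce to $\int_{\partial N}\scall{X}{\nu}\dd a$, then the first-variation identities for $\I$ and $H$ together with Codazzi and integration by parts on the closed surface $\partial N$ --- is exactly the line of proof in the cited reference. The two caveats you flag yourself are the genuine ones: the globalization of $X$ (handled in \cite{schlenker_rivin1999schlafli} via the developing map and an $\R^{3,1}$-valued primitive rather than a vector field on $N$), and the bookkeeping of sign conventions in the mean-curvature variation formula (here $H=\tr B$, $\nu$ outward, and $\mathrm{Ric}(\nu,\nu)=-2$). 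With those conventions your cancellation $-\abs{\II}^2 f + f\abs{\II}^2$ and the Codazzi identity $\divr_{\I}\II=\dd H$ are correct, and the final identity $\int_{\partial N}(\delta H+\tfrac12\scall{\delta\I}{\II})\dd a=2\int_{\partial N} f\,\dd a=2\,\delta\Vol(N)$ follows.
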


The $W$-\hsk volume of a smooth convex subset $N$ of a hyperbolic manifold is defined by the following expression:
\[
W(N) \defin \Vol(N) - \frac{1}{4} \int_{\partial N} H \dd{a} .
\]
Similarly, we define the dual volume of $N$ to be:
\[
V^*(N) \defin \Vol(N) - \frac{1}{2} \int_{\partial N} H \dd{a} .
\]
The first relation expresses the vatiation of the $W$-\hsk volume:

\begin{proposition} \label{prop:variation_W_volume}
	\[
	\delta W(N) = \frac{1}{4} \int_{\partial N} \left( \scall*{\delta \II}{\III - \frac{H}{2}\II}_{\II} + \frac{\delta K^e}{2 K^e} H \right) \dd{a} .
	\]	
	\begin{proof}
		Since $\I = \II \ B^{-1}$, we have $\delta \I = \delta \II \ B^{-1} - \II \ B^{-1} \ \delta B \ B^{-1}$. Therefore
		\begin{align*}
		\scall{\delta \I}{\II} & = \tr(\I^{-1} \ \delta \I \ \I^{-1} \ \II) \\
		& = \tr(B \ \II^{-1} (\delta \II \ B^{-1} - \II \ B^{-1} \ \delta B \ B^{-1}) B \ \II^{-1} \ \II) \\
		& = \tr(B \ \II^{-1} \ \delta \II) - \tr(\delta B) \\
		& = \tr(\II^{-1} \ \III \ \II^{-1} \ \delta \II) - \delta H \\
		& = \scall{\III}{\delta \II}_{\II} - \delta H .
		\end{align*}
		Using the fact that $\dd{a}_g = \sqrt{ \det(g_{i j}) } \dd{x}^1 \wedge \dd{x}^2$, we find $\delta (\dd{a}_g) = \frac{1}{2} \scall{\delta g}{g}_g \dd{a}_g $. Hence we have:
		\begin{align*}
		\delta \left( \dd{a} \right) & = \delta \left( \frac{\dd{a}_{\II}}{\sqrt{K^e}} \right) \\
		& = - \frac{\delta K^e}{2 (K^e)^{3/2}} \dd{a}_{\II} + \frac{1}{2 \sqrt{K^e}} \scall{\delta \II}{\II}_{\II} \dd{a}_{\II} \\
		& = \left( - \frac{\delta K^e}{2 K^e} + \frac{1}{2} \scall{\delta \II}{\II}_{\II} \right) \dd{a} .
		\end{align*}
		Applying Theorem \ref{thm:diff_schlafli_formula} and using the relations found above, we obtain:
		\begin{align*}
		\delta W(N) & = \delta \Vol(N) - \frac{1}{4} \delta \left( \int_{\partial N} H \dd{a} \right) \\
		& = \frac{1}{2} \int_{\partial N} \left( \delta H + \frac{1}{2} \scall{\delta \I}{\II} \right) \dd{a} - \frac{1}{4} \int_{\partial N} \left( \delta H \dd{a} + H \delta(\dd{a}) \right) \\
		& = \frac{1}{4} \int_{\partial N} \left( 2 \delta H + \scall{\III}{\delta \II}_{\II} - \delta H - \delta H - H  \left( - \frac{\delta K^e}{2 K^e} + \frac{1}{2} \scall{\delta \II}{\II}_{\II} \right) \right) \dd{a} \\
		& = \frac{1}{4} \int_{\partial N} \left( \scall*{\delta \II}{\III - \frac{H}{2}\II}_{\II} + \frac{\delta K^e}{2 K^e} H \right) \dd{a} ,
		\end{align*}
		which proves the statement.
	\end{proof}
\end{proposition}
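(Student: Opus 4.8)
The plan is to differentiate the defining expression $W(N) = \Vol(N) - \frac{1}{4}\int_{\partial N} H \dd{a}$ term by term and to dispose of the volume part using the differential Schl\"afli formula (Theorem~\ref{thm:diff_schlafli_formula}). First I would write
\[
\delta W(N) = \delta \Vol(N) - \frac{1}{4}\int_{\partial N}\left(\delta H \dd{a} + H\,\delta(\dd{a})\right),
\]
so that every contribution is an integral over $\partial N$ of a pointwise variation, and then substitute $\delta\Vol(N) = \frac{1}{2}\int_{\partial N}(\delta H + \frac{1}{2}\scall{\delta\I}{\II})\dd{a}$. After this reduction the whole statement becomes the task of rewriting the integrand in terms of $\delta\II$, $\delta H$, and $\delta K^e$ only, and checking that the pieces assemble into the claimed pairing against $\III - \frac{H}{2}\II$.

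The two algebraic inputs I would establish separately. For the first, I would use that, read as $(1,1)$-tensors via the shape operator $B$, one has $\II = \I B$ and $\III = \II B$, hence $\I = \II B^{-1}$ and therefore $\delta\I = \delta\II\,B^{-1} - \II\,B^{-1}\,\delta B\,B^{-1}$. Expanding $\scall{\delta\I}{\II} = \tr(\I^{-1}\,\delta\I\,\I^{-1}\,\II)$, the factors collapse to $\tr(B\,\II^{-1}\,\delta\II) - \tr(\delta B)$, and since $\tr(\delta B) = \delta H$ and $\tr(B\,\II^{-1}\,\delta\II) = \scall{\III}{\delta\II}_{\II}$ this gives the identity
\[
\scall{\delta\I}{\II} = \scall{\III}{\delta\II}_{\II} - \delta H .
\]
For the second, I would convert the first-fundamental-form area element to the second via $\dd{a} = (K^e)^{-1/2}\dd{a}_{\II}$ and apply the standard formula $\delta(\dd{a}_g) = \frac{1}{2}\scall{\delta g}{g}_g \dd{a}_g$ to $\dd{a}_{\II}$, producing
\[
\delta(\dd{a}) = \left(-\frac{\delta K^e}{2 K^e} + \frac{1}{2}\scall{\delta\II}{\II}_{\II}\right)\dd{a} .
\]

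Finally I would substitute both identities into the displayed expression for $\delta W(N)$. The $\delta H$ produced by the Schl\"afli term and the $\delta H$ produced by $\delta(\int H \dd{a})$ cancel, and the two remaining $\delta\II$-contributions, namely $\scall{\III}{\delta\II}_{\II}$ and $-\frac{H}{2}\scall{\delta\II}{\II}_{\II}$, combine into the single pairing $\scall{\delta\II}{\III - \frac{H}{2}\II}_{\II}$, leaving the curvature term $\frac{\delta K^e}{2K^e}H$ untouched; this is exactly the asserted formula. The step I expect to demand the most care is the bookkeeping in the tensor manipulation of the previous paragraph: keeping straight which metric ($\I$ or $\II$) each trace and inner product is computed against, handling the musical isomorphisms consistently, and tracking the conversion factor $K^e = \det B$ between the two area forms. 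Once the identity $\scall{\delta\I}{\II} = \scall{\III}{\delta\II}_{\II} - \delta H$ is proved cleanly, everything that remains is a routine cancellation.
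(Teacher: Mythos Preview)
Your proposal is correct and follows essentially the same approach as the paper's proof: both start from the differential Schl\"afli formula, derive the identity $\scall{\delta\I}{\II} = \scall{\III}{\delta\II}_{\II} - \delta H$ via $\I = \II B^{-1}$, compute $\delta(\dd a)$ through $\dd a = (K^e)^{-1/2}\dd a_{\II}$, and then substitute and cancel. The organization and the two auxiliary identities you isolate are exactly those in the paper.
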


On the other hand, the variation formula of the dual volume satisfies:

\begin{proposition} \label{prop:variation_dual_volume}
	\[
	\delta V^*(N) = \frac{1}{4} \int_{\partial N} \scall{\delta \I}{\II - H \I} \dd{a}
	\]
	\begin{proof}
		We need to apply the differential Schl\"afli formula and to express the variation of the term $\int H \dd{a}$. We can proceed similarly to what done for the $W$-\hsk volume, obtaining
		\begin{align*}
			\delta V^*(N) & = \delta \Vol(N) - \frac{1}{2} \delta \left( \int_{\partial N} H \dd{a} \right) \\
			& = \frac{1}{2} \int_{\partial N} \left( \delta H + \frac{1}{2} \scall{\delta \I}{\II} \right) \dd{a} - \frac{1}{2} \int_{\partial N} \left( \delta H \dd{a} + H \delta(\dd{a}) \right) \\
			& = \frac{1}{4}	\int_{\partial N} \left( 2 \delta H + \scall{\delta \I}{\II} - 2 \delta H - 2 \ \frac{H}{2} \scall{\delta \I}{\I} \right) \dd{a} \\
			& = \frac{1}{4}	\int_{\partial N} \scall{\delta \I}{\II - H \I} \dd{a} ,
		\end{align*}
		where, in the second step we used the fact that $\delta(\dd a) = \frac{1}{2} \scall{\delta \I}{\II} \dd{a}$. This concludes the proof of the statement.
	\end{proof}
\end{proposition}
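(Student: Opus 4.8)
The plan is to differentiate the defining expression $V^*(N) = \Vol(N) - \tfrac{1}{2}\int_{\partial N} H\,\dd a$ term by term, feeding the volume term into the differential Schl\"afli formula (Theorem~\ref{thm:diff_schlafli_formula}) and computing the variation of the boundary integral $\int_{\partial N} H\,\dd a$ by hand. Everything below is phrased intrinsically in terms of the variations $\delta\I$, $\delta\II$, $\delta H$ of the induced data on $\partial N$, so that it applies verbatim both when one fixes the underlying manifold and varies the hyperbolic metric, and when one fixes a hyperbolic manifold and varies the compact convex subset $N$.

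First, Theorem~\ref{thm:diff_schlafli_formula} gives directly
\[
\delta \Vol(N) = \frac{1}{2}\int_{\partial N}\Big(\delta H + \tfrac{1}{2}\scall{\delta\I}{\II}\Big)\dd a .
\]
Next I would expand $\delta\!\left(\int_{\partial N} H\,\dd a\right) = \int_{\partial N}\big(\delta H\,\dd a + H\,\delta(\dd a)\big)$. The only computational input needed is the variation of the boundary area form: writing $\dd a = \sqrt{\det(\I_{ij})}\,\dd{x}^1\wedge\dd{x}^2$ in local coordinates and using $\delta\sqrt{\det\I} = \tfrac12\sqrt{\det\I}\,\tr(\I^{-1}\delta\I)$, one gets $\delta(\dd a) = \tfrac{1}{2}\tr(\I^{-1}\delta\I)\,\dd a = \tfrac{1}{2}\scall{\delta\I}{\I}\,\dd a$ — exactly the identity already used in the proof of Proposition~\ref{prop:variation_W_volume}.

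Finally I would assemble $\delta V^*(N) = \delta\Vol(N) - \tfrac{1}{2}\,\delta\!\left(\int_{\partial N} H\,\dd a\right)$ and observe that the two occurrences of $\delta H$ cancel: the Schl\"afli term contributes $+\tfrac12\int_{\partial N}\delta H\,\dd a$, while the boundary correction contributes $-\tfrac12\int_{\partial N}\delta H\,\dd a$. What survives is
\[
\frac{1}{4}\int_{\partial N}\scall{\delta\I}{\II}\,\dd a \;-\; \frac{1}{2}\int_{\partial N} H\cdot\tfrac{1}{2}\scall{\delta\I}{\I}\,\dd a \;=\; \frac{1}{4}\int_{\partial N}\scall{\delta\I}{\II - H\,\I}\,\dd a ,
\]
which is the claimed formula. (Equivalently one can start from $V^*(N) = W(N) - \tfrac14\int_{\partial N} H\,\dd a$ and feed the result of Proposition~\ref{prop:variation_W_volume} together with the same computation of $\delta(\int H\,\dd a)$.)

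There is no genuine obstacle here: the statement is a bookkeeping corollary of the differential Schl\"afli formula, strictly parallel to the computation of $\delta W(N)$ in Proposition~\ref{prop:variation_W_volume}. The only points requiring care are keeping the conventions straight — namely that $\scall{\cdot}{\cdot}$ is the pairing of symmetric $2$-tensors induced by $\I$, so that $\scall{\delta\I}{\I} = \tr(\I^{-1}\delta\I)$ and $H = \scall{\I}{\II}$ — and tracking the factors of $\tfrac12$ and $\tfrac14$ through the cancellation of the $\delta H$ terms.
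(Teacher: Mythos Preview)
Your proposal is correct and follows essentially the same route as the paper: apply the differential Schl\"afli formula to $\delta\Vol(N)$, expand $\delta\!\left(\int_{\partial N} H\,\dd a\right)$ using $\delta(\dd a)=\tfrac12\scall{\delta\I}{\I}\,\dd a$, and observe that the $\delta H$ contributions cancel. (In fact the paper's parenthetical comment mis-types this last identity as $\tfrac12\scall{\delta\I}{\II}\,\dd a$, though its displayed computation uses the correct version, exactly as you wrote it.)
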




\emergencystretch=1em

\printbibliography
	
\end{document}